\let\oldvec\vec
\let\vec\oldvec
\title{An Abstract Model for Branching and its Application to Mixed Integer Programming}
    \author{Pierre Le Bodic\protect\footnote{Now at the Faculty of Information Technology of Monash University.} \and George Nemhauser}
    \institute{Georgia Institute of Technology \email{lebodic@gatech.edu}}
    \titlerunning{An Abstract Model for Branching and its Application to MIP}
    \newcommand{\svjourabstractend}{\keywords{ Branch and Bound, Abstract Model, Mixed Integer Programming, Computational Complexity, Algorithm Analysis} \subclass{90C11 \and 90C60 \and 68Q25}}
    \spnewtheorem*{theorem*}{Theorem}{\bf}{\it}
    \spnewtheorem*{corollary*}{Corollary}{\bf}{\it}
    \newcommand{\problemnewline}{\\}
    \spnewtheorem*{tmpproblem*}{Problem:}{\bfseries}{\itshape}
    \newenvironment{customproblem}{\begin{theopargself}\begin{tmpproblem*}}{\end{tmpproblem*}\end{theopargself}}
   \newcommand{\thankreviewers}{ The authors would like to extend their gratitude to the reviewers for the numerous improvements they contributed to through their comments.}
   \newcommand{\announceOS}{A more extensive report of the numerical experiments is provided in the online supplement.}
   \newcommand{\OS}{}
    \author{Pierre Le Bodic\footnote{Now at the Faculty of Information Technology of Monash University.}~ and George Nemhauser \\ Georgia Institute of Technology}
    \newcommand{\svjourabstractend}{}
    \newenvironment{acknowledgements}[0]{\section*{Acknowledgments}}{}
    \newtheorem{theorem}{Theorem}
    \newtheorem{corollary}[theorem]{Corollary}
    \newtheorem{conjecture}{Conjecture}
    \newtheorem{proposition}[theorem]{Proposition}
    \newtheorem{definition}{Definition}
    \newtheorem*{theorem*}{Theorem}
    \newtheorem*{lemma*}{Lemma}
    \newtheorem*{corollary*}{Corollary}
    \def\thmhead@plain#1#2#3{
      \thmname{#1}\thmnumber{\@ifnotempty{#1}{ }\@upn{#2}}
      \thmnote{ {\the\thm@notefont#3}}}
    \let\thmhead\thmhead@plain
    \newtheoremstyle{customproblem}
      {\topsep}{\topsep}
      {\itshape}{}
      {\bfseries}{}
      {\newline}{}
    \theoremstyle{customproblem}
    \newtheorem*{customproblem}{Problem}
    \newcommand{\problemnewline}{}
    \newcommand{\thankreviewers}{}
    \newcommand{\announceOS}{A more extensive report of the numerical experiments is provided in the appendices, Section \ref{sec:app:all-res}.}
    \newcommand{\OS}{
\section{Detailed results for the experiments on MIP instances}\label{sec:app:all-res}
We present in Section \ref{sec:app:bench-res} and \ref{sec:app:tree-res} the detailed results for the benchmark and tree test sets, as described in Section \ref{sec:mipexpe-bench} and \ref{sec:mipexpe-tree}, respectively.
There are three tables in each section, one for each of the scoring function tested, namely \sfproduct{}, \sfratio{}, and \sfonevar{}.
In each table, each line corresponds to all 10 permutations for each instance of the test set.
The second column indicates the number of permuted instances solved.
We provide statistics in terms of time, number of nodes and LP iterations.
For each of these three measures, we give, from left to right in the table, the minimum, average over solved instances (and average over all 10 instances), and maximum.
In the node and LP iterations columns, we use the letters \emph{k}, \emph{m} and \emph{b} as a shorthand for thousands, millions and billions.
Total and averages are provided at the end of each table.
Note that it would not be fair to compare the measures over solved instances in this setting.
Indeed, the set of solved instances differ depending on the scoring function considered.
Function \sfratio{} generally solves more hard instances, thus the averages on solved instances have higher values.

\subsection{Benchmark test set results}\label{sec:app:bench-res}
Tables \ref{tab:app-miplibs-prod}, \ref{tab:app-miplibs-ratio} and \ref{tab:app-miplibs-onevar} give the results on the benchmark test set for the scoring functions \sfproduct{}, \sfratio{}, and \sfonevar{}, respectively.
\begin{center}
\begin{scriptsize}
\begin{landscape}
{\setlength{\tabcolsep}{5pt}

}
\end{landscape}
\end{scriptsize}
\end{center} }
\newcommand{\bb}{B\&B}
\newcommand{\BB}{Branch \& Bound}
\newcommand{\Z}{\mathds{Z}}
\newcommand{\Zp}{\Z_{>0}}
\newcommand{\Q}{\mathds{Q}}
\newcommand{\gap}{GVB}
\newcommand{\nvar}{MVB}
\newcommand{\onevar}{SVB}
\newcommand{\Gap}{\textsc{General Variable Branching}}
\newcommand{\Nvar}{\textsc{Multiple Variable Branching}}
\newcommand{\Onevar}{\textsc{Single Variable Branching}}
\newcommand{\countkp}{\textsc{\#Knapsack}}
\newcommand{\sfproduct}{\texttt{product}}
\newcommand{\sfonevar}{\texttt{svts}}
\newcommand{\sfratio}{\texttt{ratio}}
\newcommand{\sflinear}{\texttt{linear}}
\DeclareMathOperator*{\argmin}{arg\,min}
\renewcommand{\times}{\cdot}
\begin{document}
\maketitle
\begin{abstract}
 The selection of branching variables is a key component of branch-and-bound algorithms for solving Mixed-Integer Programming (MIP) problems since the quality of the selection procedure is likely to have a significant effect on the size of the enumeration tree.
 State-of-the-art procedures base the selection of variables on their ``LP gains'', which is the dual bound improvement obtained after branching on a variable.
 There are various ways of selecting variables depending on their LP gains.
 However, all methods are evaluated empirically.
 In this paper we present a theoretical model for the selection of branching variables.
 It is based upon an abstraction of MIPs to a simpler setting in which it is possible to analytically evaluate the dual bound improvement of choosing a given variable.
 We then discuss how the analytical results can be used to choose branching variables for MIPs, and we give experimental results that demonstrate the effectiveness of the method on MIPLIB 2010 ``tree'' instances where we achieve a $5\%$ geometric average time and node improvement over the default rule of SCIP, a state-of-the-art MIP solver.
\svjourabstractend{}
\end{abstract}

\section{Introduction}
 \BB{} (\bb{}) \cite{land60a} is currently the most successful and widely used algorithm to solve general Mixed Integer Programs (MIPs). 
\bb{} searches the solution space by recursively splitting it, which is traditionally represented by a tree, where the root node is associated with the entire solution space and where sibling nodes represent a partition of the solution space of their parent node.
At each node, the subspace is encoded by a MIP, and its Linear Program (LP) relaxation is solved to provide a dual bound.
If the LP is infeasible, or if the LP bound is no better than the primal bound, the node is pruned (the primal bound is the value of the best feasible solution found so far).
Otherwise, the subspace at that node is partitioned and the two corresponding children nodes are recursively explored.
The part of the \bb{} algorithm that decides how to partition the solution space is referred to as the \emph{branching rule}.
Typically, the branching rule selects one variable among the \emph{candidate variables}, i.e. those that have a fractional value in the LP solution of the current node, but are required to be integer in the original MIP.
Formally, suppose that at the current node the value of an integer variable $x$ in the node's LP solution is $x_{lp} \not\in \Z$.
Branching on $x$ would result in two children, each encoding a solution subspace, one in which $x$ is upper-bounded by $\lfloor x_{lp} \rfloor$, the other in which $x$ is lower-bounded by $\lceil x_{lp} \rceil$.
The PhD thesis \cite{achterberg07a} provides a practical point of view of the latest advances in MIP solving and branching in particular.
For a general overview of MIP solving and \bb{}, see \cite{nemhauser88a}, \cite{achterberg13b} and \cite{conforti14a}.

This research is motivated by the desire to understand the fundamentals of state-of-the-art MIP branching rules (many of which are justified experimentally).
The main contribution of this paper is the introduction of theoretical decision problems to study the branching component of \bb{}.
Based on an analysis of these models, we introduce new \emph{scoring functions}, and prove their efficiency on both simulated experiments and MIP instances.
The paper is organized as follows.
After defining the abstract \bb{} model in Section \ref{sec:defs}, we study the simplest of our problems in Section \ref{sec:onevar}.
More complex problems are introduced and analyzed in Sections \ref{sec:nvar} and \ref{sec:gap}, respectively.
Section \ref{sec:scoring} introduces two new scoring functions based on the theory developed in previous sections.
Experimental results are presented in Section \ref{sec:expresults} and conclusions are given in Section \ref{sec:conclusion}.

 \section{Abstract \BB{} model}\label{sec:defs}
In this section we model an abstract version of \bb{} through a series of definitions.
We start by defining what we consider to be a \emph{variable}.
 \begin{definition}
  A variable $x$ is encoded by a pair of non-negative (left and right) integer gains $(l_x, r_x)$ with $1 \leq l_x \leq r_x$.
 \end{definition}
 The $l_x$ and $r_x$ gains model the dual bound changes that occur when branching on the variable $x$.
 We suppose that these two gains are \emph{known} and \emph{fixed}.
 (Note that when solving a MIP using \bb{}, these gains are not fixed for a variable.)
 
 We now define a \emph{\BB{} tree}.
 \begin{definition}
 \label{def:bbtree}
 Given a set of variables, a \BB{} tree is a binary tree (in the graph sense) with one variable $x$ assigned to each inner node $i$
 .
 \end{definition}
 We then say that variable $x$ is \emph{branched on} at node $i$.
 Note that, in Definition \ref{def:bbtree}, the root node is considered as an inner node.
 Also note that, in our abstract model, the gains $(l_x, r_x)$ of a variable $x$ do not change with the depth of the node where the variable is branched on, nor do they depend on the dual bound at this node.
 
 These two definitions naturally lead to the definition of \emph{dual gap closed} at a node $i$ in a \bb{} tree.
 Note that throughout the article, the gap will always refer to the \emph{absolute} gap (as opposed to the relative gap), i.e. the absolute difference between the primal bound and the dual bound.
 \begin{definition}
  The dual gap $g_i$ closed at a node $i$, referred to as $g_i$, is given by the recursive formula
  \begin{equation*}
   g_i = \begin{cases} 0 &\text{ if $i$ is the root node} \\ g_h + l_{x(h)} &\text{ if $i$ is the left child of node $h$} \\ g_h + r_{x(h)} &\text{ if $i$ is the right child of node $h$,}\end{cases}
  \end{equation*}
  where $x(h)$ is the variable branched on at node $h$.
 \end{definition}
 For instance, if at a node $i$, the dual bound is $g_i=10$, and variable $(2, 5)$ is branched on, the dual bound closed at the left and right children of $i$ are $12$ and $15$, respectively.
 
 The gap \emph{closed} by a \bb{} tree is defined as follows:
  \begin{definition}
  A tree closes a gap $G$ if for all leaves $i$, $g_i \geq G$ holds.
 \end{definition}
 Throughout, the \emph{size} of a tree refers to the number of nodes of the tree.
 Using these definitions, the following sections define problems that model the \bb{} algorithm in increasing complexity.

\section{The \Onevar{} problem}\label{sec:onevar} 
  The first problem we define using the abstract model set up in Section \ref{sec:defs} is the \Onevar{} (\onevar{}) problem, which is a tree-size measurement problem.
 \begin{customproblem}[\Onevar{}]\problemnewline{}
 \textbf{Input:}  one variable encoded by $(l, r) \in \Zp^2$, $G \in \Zp$, $k \in \Zp$, such that $l \leq r \leq G$.\\
 \textbf{Question:} Is the size of the \BB{} tree that closes the gap $G$, repeatedly using variable $(l, r)$, at most $k$? 
 \end{customproblem}
  In the example given in Figure \ref{fig:treeex-onevar}, a \bb{} tree that closes a gap of $G=6$ is built using only variables with gains $(2,5)$.
  Observe that any tree closing a gap $G$ contains the \emph{unique} tree that closes $G$ in a minimum number of nodes.
  As we will see, the analysis of \onevar{} will prove more complex and insightful than it may seem at first glance.
 \begin{figure}
 \centering
 \begin{tikzpicture}[scale=0.5,level distance=1.5cm,
level 1/.style={sibling distance=4.5cm},
level 2/.style={sibling distance=2.5cm}]
\tikzstyle{every node}=[minimum size=.75cm, circle,draw]
 \node (a){$0$}
  child {node (b1) {$2$}
    child {node (c1) {$4$}
      child {node (d1) {$6$} }
      child {node (d2) {$9$} }
    }
    child {node (c2) {$7$} }
  }
  child {node (b2) {$5$}
    child {node (c3) {$7$}}
    child {node (c4) {$10$}}
  };
 \end{tikzpicture}
 \caption{\bb{} tree with 9 nodes that closes a gap of 6 with variable gains $(2,5)$. The gap closed at each node is indicated by the value at the center of each node.}
 \label{fig:treeex-onevar}
 \end{figure}
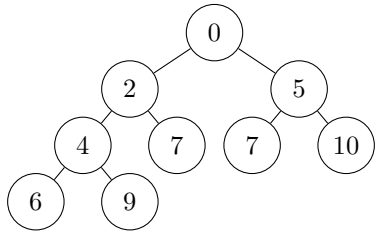

 \subsection{Motivation: state-of-the-art scoring functions and example}\label{sec:motiv-onevar}
A state-of-the-art branching rule for MIP solving (as implemented in SCIP \cite{achterberg09b}) is a hybrid of mainly two methods: \emph{strong branching} and \emph{pseudo-cost branching}.
Given a candidate variable $x$, strong branching computes the LP gains of the would-be children of the current node if $x$ is branched on.
The candidate variable which provide the best LP bounds is then branched on.
Pseudo-cost branching complements strong branching; it keeps track of the LP gains of branching variables for which the children have already been processed, and, given sufficient historical data, estimates the gains that would be computed by strong branching.
Strong branching is computationally expensive, as it requires solving many LPs, while pseudo-cost branching only requires a few arithmetic operations at each node.
However, pseudo-cost branching is an estimate, and it requires initialization.
The state-of-the-art branching rule essentially consists of using strong branching early in the tree, i.e. at the root node and a few subsequent levels, while pseudo-cost branching is called at lower levels.
In practice, there are many refinements to these methods.
We direct the reader to Tobias Achterberg's PhD thesis \cite{achterberg07a} and references therein for an in-depth review of branching rules.

At each node, strong branching (or pseudo-cost branching) provides the (estimated) LP gains $(l_x, r_x)$ resulting from branching up and down for each fractional variable $x$.
Each candidate variable $x$ is then scored according to its gains $(l_x, r_x)$, and the the highest scoring variable is selected for branching.
The state-of-the-art \emph{scoring function} used for this purpose is:
\begin{equation*}
 \max(\epsilon, l_x) \times \max(\epsilon, r_x),
\end{equation*}
where $\epsilon>0$ is chosen close to 0 (e.g. $\epsilon=10^{-6}$ in \cite{achterberg07a}) to break the ties if all variables satisfy $\min(l_x, r_x)=0$, but plays no role otherwise.
We simply refer to this scoring function as the \emph{\sfproduct{}} function.
Prior to the introduction of the \sfproduct{} function, the \emph{\sflinear{}} (or convex) function was the standard:
\begin{equation*}
(1-\mu) \times l_x + \mu \times r_x,
\end{equation*}
where $\mu$ is a parameter in $[0,1]$ ($\mu=\frac{1}{6}$ in \cite{achterberg07a}).
In \cite{achterberg07a}, Achterberg reports that switching from the \sflinear{} function to the \sfproduct{} function yields an improvement of $14\%$ in computing time, and $34\%$ in number of nodes.
The rationale behind the \sflinear{} function with $\mu=\frac{1}{6}$ is that it should be preferable to improve the dual bound of both children by a little rather than only one of them.
The \sfproduct{} function prefers equal left and right gains, i.e. it is more likely to produce a balanced tree.
However, there are types of instances for which the \sflinear{} function performs better, so the \sfproduct{} function does not systematically outperform the \sflinear{} one.

To the best of our knowledge, both the \sflinear{} and the \sfproduct{} function have been established experimentally, and no theoretical evidence supports the use of one over the other, or over any other possible function.
We now give a simple example suggesting that more complex functions are required to score variables.
If the \sfproduct{} or the \sflinear{} functions are good scoring functions for solving MIPs, then they should also perform well on simple models such as \onevar{}.
Consider variables $(10, 10)$ and $(2, 50)$: both the \sfproduct{} and the \sflinear{} function (with default \sflinear{} coefficient $\mu = \frac{1}{6}$) assign these variables an equal score.
Observe that variable $(2, 49)$ receives a smaller score than $(2, 50)$ (and therefore than variable $(10, 10)$).
We now consider the tree-sizes of two \onevar{} instances, one that has variable $(2, 49)$ as input, the other having variable $(10, 10)$.

  \begin{figure}
  \centering
  \begin{subfigure}{.45\columnwidth}
    \begin{tikzpicture}[scale=.7]
    \begin{axis}[xlabel={gap},ylabel={tree-size},]
    \addplot[blue] table {\tententofifty};
    \addplot[red] table {\twofortyninetofifty};
    \end{axis}
  
    \end{tikzpicture}
    \caption{Tree-sizes for gaps in [1, 50]}
  \end{subfigure}
  \begin{subfigure}{.1\columnwidth}
  \end{subfigure}
  \begin{subfigure}{.45\columnwidth}
    \begin{tikzpicture}[scale=.7]
    \begin{axis}[xlabel={gap}]
    \addplot[blue] table {\tententohundred};
    \addplot[red] table {\twofortyninetohundred};
    \end{axis}
    \end{tikzpicture}
    \caption{Tree-sizes for gaps in [1, 100]}
  \end{subfigure}
  \caption{Plot of the sizes of the trees built with variables $(10,10)$ (in blue) and $(2, 49)$ (in red).}
  \label{fig:treesize-ex-onevar}
  \end{figure}
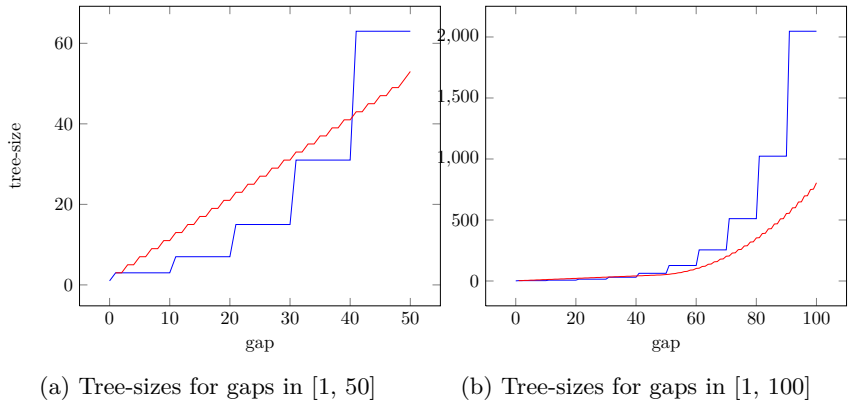

Figure \ref{fig:treesize-ex-onevar} gives two \onevar{} tree-size plots.
For gaps no larger than $40$, variable $(10, 10)$ requires fewer nodes than $(2, 49)$ to close the gap, but larger gaps are closed faster with $(2, 49)$.
For a gap of 1000, using only $(10, 10)$ produces a tree $323$ \emph{million} times larger than using only $(2, 49)$, yet both scoring functions assign a higher score to variable $(10, 10)$.
It is clear from this example that the state-of-the-art scoring functions are imperfect.
 
 \subsection{A polynomial-time algorithm}
There exists a simple recurrence relation that solves \onevar{}.
For a given gap $G$, $t(G)$ is defined as the size of the \onevar{} tree that closes $G$:
 \begin{equation}
   t(G) = \begin{cases}
      1 &\text{ if } G \leq 0\\
      1 + t(G-l) + t(G-r) &\text{ otherwise.}         
         \end{cases}
         \label{eq:onevar-t}
 \end{equation}
 
 Hence, for a given $k$, the answer to SVB is YES if and only if $t(G) \leq k$.
 Unfortunately, computing $t(G)$ requires $O(G)$ operations.
 Therefore the running time of this algorithm is pseudo-polynomial (and thus exponential) in the encoding length of the input.
 Moreover, observe that we do not necessarily have $G=O(\log(k))$ for YES instances of SVB: if the given variables are all $(1, G)$, then $k=2G +1$ is the tree-size.
 However, there exists a closed-form formula that can be evaluated in polynomial time:  
 \begin{theorem}\label{th:cff}
  \onevar{} is in P.
  Furthermore, a formula for the size of the \bb{} tree is
  \begin{equation}
   t(G) = 1 + 2 \times \sum_{k_r=1}^{\lceil\frac{G}{r}\rceil} \binom{ k_r + \lceil \frac{G-(k_r-1) \times r}{l} \rceil- 1 }{k_r}.
   \label{eq:onevar-cff}
  \end{equation}
 \end{theorem}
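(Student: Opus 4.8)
The plan is to read the recurrence~\eqref{eq:onevar-t} as a recipe for building a full binary tree and then to count its nodes by a direct combinatorial enumeration, after which polynomiality will follow from a crude but decisive size bound. First I would record that the minimal tree is \emph{full}: every node with closed gap $g_i < G$ is branched on (so it has exactly two children), while every node with $g_i \geq G$ is a leaf. Since a full binary tree with $I$ internal nodes has exactly $I+1$ leaves, the size satisfies $t(G) = 2I + 1$, so it suffices to count the internal nodes. Next I would identify each node with the word $s \in \{L,R\}^*$ describing the left/right turns on the path from the root; if $s$ has $a$ letters $L$ and $b$ letters $R$, then its closed gap is exactly $a\,l + b\,r$. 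Because each branch strictly increases the closed gap ($1 \le l \le r$), a node is internal precisely when its own closed gap is below $G$, its ancestors then being automatically internal and hence the node actually present in the tree. Thus the internal nodes are in bijection with the words $s$ satisfying $a\,l + b\,r < G$.

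Then I would carry out the enumeration. For a fixed number $b$ of right-turns there are $\binom{a+b}{b}$ words with $a$ left-turns, and the constraint $a\,l + b\,r < G$ is equivalent to $0 \le a \le \lceil (G-br)/l\rceil - 1$, which is non-empty exactly when $0 \le b \le \lceil G/r\rceil - 1$. Summing over $a$ and invoking the hockey-stick identity $\sum_{a=0}^{m}\binom{a+b}{b} = \binom{m+b+1}{b+1}$ collapses the inner sum to a single binomial coefficient; substituting $k_r = b+1$ reindexes the outer sum over $1 \le k_r \le \lceil G/r\rceil$ and yields exactly the summand of~\eqref{eq:onevar-cff}. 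Combined with $t(G) = 2I+1$, this proves the closed-form formula.

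For membership in $\mathrm{P}$ the subtlety — and the main obstacle — is that formula~\eqref{eq:onevar-cff} has $\lceil G/r\rceil$ terms, which is only pseudo-polynomial in the binary encoding length of the input. The key observation to overcome this is that any root-to-leaf path of length $d$ closes a gap of at most $d\,r$, so every leaf sits at depth at least $\lceil G/r\rceil$; consequently the complete binary tree down to that depth is contained in the minimal tree and
\[
 t(G) \ge 2^{\lceil G/r\rceil}.
\]
I would use this inequality as a guard: compute $B = \lceil G/r\rceil$, and if $2^{B} > k$ (equivalently, $B$ exceeds the bit-length of $k$) answer NO immediately, since then $t(G) > k$. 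Otherwise $B \le \log_2 k$ is polynomial in the input length, so the sum has polynomially many terms; each is a binomial coefficient $\binom{n}{k_r}$ with small lower index $k_r \le B$, which can be evaluated multiplicatively while truncating any partial product that provably exceeds $k$. Deciding $t(G) = 1 + 2I \le k$ then takes time polynomial in the encoding length, which establishes that \onevar{} is in $\mathrm{P}$. The delicate point throughout is exactly this interplay: the enumeration itself is elementary, but without the $2^{\lceil G/r\rceil}$ estimate the number of summands would not be controlled by the input size.
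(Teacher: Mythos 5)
Your proof is correct, and it shares the paper's overall skeleton: both arguments classify root-to-node paths by the number of right turns, and both establish membership in P by the same guard — every leaf has depth at least $\lceil G/r\rceil$, so the minimal tree contains the complete binary tree of that depth, hence if $k < 2^{\lceil G/r\rceil}$ one can answer NO immediately, and otherwise $\lceil G/r\rceil = O(\log k)$ bounds the number of summands, each of which involves only numbers with polynomially many bits. Where you genuinely diverge is in the combinatorial count. The paper counts \emph{leaves}: it first argues that the gap closed at a leaf is less than $G+r$ (otherwise the parent would already be a leaf), deduces $k_l \leq \lceil (G-r(k_r-1))/l\rceil - 1$, and then exhibits a bijection between leaves with exactly $k_r$ right turns and $k_r$-element subsets of $\{1,\dots,k_r+\lceil (G-r(k_r-1))/l\rceil-1\}$, producing each binomial coefficient in one stroke. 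You instead count \emph{internal} nodes, i.e.\ words with $a$ left and $b$ right turns satisfying $a\,l + b\,r < G$, sum $\binom{a+b}{b}$ over the admissible range of $a$, collapse the inner sum with the hockey-stick identity, and reindex $k_r = b+1$; the relation $t(G)=2I+1$ for full binary trees then gives the stated formula. Your route replaces the paper's bijective encoding — verifying that every $k_r$-subset really encodes a valid leaf is the subtlest point of the paper's proof — by an elementary double count plus a standard identity, which makes correctness more mechanical to check; the paper's bijection is slicker and needs no summation identity, and its leaf count matches yours automatically since a full binary tree has exactly one more leaf than internal nodes. One small remark: your truncation of partial products at $k$ is a valid safeguard but is not actually needed, since $k_r \leq \lceil G/r\rceil = O(\log k)$ already forces every term $\binom{n}{k_r} \leq n^{k_r}$ to have polynomially many bits.
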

  \begin{proof}
 The size of the input is $O(\log(G)+\log(k))$.
 Let $T$ be the minimal tree that closes the gap $G$, and let $d = \lceil\frac{G}{r}\rceil$ be the \emph{minimum} depth of its leaf nodes.
 If $k< 2^{d+1}-1$, the answer to the decision problem \onevar{} is necessarily NO, therefore we can assume the opposite throughout the rest of proof.
 It follows that $d = O(log(k))$.
 Starting from the root of $T$, we turn right at most $d$ times before finding a leaf.
 For a given path from the root to a leaf, let $k_l$ (resp. $k_r$) denote the number of times we turn left (resp. right).
 Observe that a pair $(k_l, k_r)$ does not uniquely identify a leaf.
 Furthermore, since the number of leaves is in $O(k)$, it is impossible to iterate over all of them in a time polynomial in the size of the input.
 However, since all leaves satisfy $k_r \leq d$, it is possible to iterate over all the values of $k_r$. 
 From now on we suppose that $k_r \leq d$ is fixed, and restrict our attention to those leaves that are reached by turning right exactly $k_r$ times.
 The inequality
 \begin{equation*}
   k_l \times l + k_r \times r < G + r
 \end{equation*}
 must hold for each leaf, since the gap closed at each leaf cannot exceed $G$ by $r$ or more, as otherwise the gap would already be closed at the leaf's parent.
 We therefore have the bound $k_l \leq \lceil\frac{G-r(k_r-1)}{l}\rceil -1$, and the depth of a leaf is bounded by $k_r+\lceil\frac{G-r(k_r-1)}{l}\rceil -1$.
 Furthermore, observe that any $k_r$ elements chosen in the set $\{1, \dots, k_r+\lceil\frac{G-r(k_r-1)}{l}\rceil -1 \}$ uniquely determine a path to a leaf, and that each leaf can be encoded using this scheme.
 This bijection thus ensures that the number of such leaves is $\binom{k_r+\lceil\frac{G-r(k_r-1)}{l}\rceil -1}{k_r}$.
 The total number of leaves is therefore 
  \begin{equation}
   \sum_{k_r=0}^{\lceil\frac{G}{r}\rceil} \binom{ k_r + \lceil \frac{G-(k_r-1) \times r}{l} \rceil- 1 }{k_r}.
   \label{eq:totalleaves}
  \end{equation}
Formula \eqref{eq:totalleaves} can be computed in $O(d^2)=O(\log^2(k))$ operations, which is polynomial in the size of the input.
Observe how the formula iterates over the possible $k_r$'s but avoids iterating over the range of $k_l$'s, as this may require $O(k)$ steps.
Since each inner node has exactly two children, the theorem is proven.
 \end{proof}
 
 Evaluating formula \eqref{eq:onevar-cff} becomes impractical as the gap $G$ becomes large.
 We present below an asymptotic study of the tree-size that can be used in practice when formula \eqref{eq:onevar-cff} is too expensive to compute.
 
 \subsection{Asymptotic study}\label{sec:onevar-asympt}
 Analyzing algorithms through a related recurrence relation similar to \eqref{eq:onevar-t} (and its mathematical properties, such as the asymptotic behavior) is a standard technique (see \cite[Section 4.5]{cormen09a}).
 In this section and throughout the article we however systematically provide self-sufficient explanations and analyzes, as our readership may not be familiar with algorithm analysis techniques.
 
 As the gap to close $G$ becomes large, we prove that the tree-size grows essentially linearly, i.e. there exists a fixed \emph{ratio} determining the growth of the tree.
 Perhaps the most popular example of this phenomenon is the Fibonacci sequence, for which the golden ratio $\frac{1+\sqrt{5}}{2}$ corresponds to the asymptotic growth rate between two consecutive numbers. 
 In fact, the Fibonacci sequence is given by the recursion formula \eqref{eq:onevar-t} with variable $(1,2)$ up to the additive constant $1$, which does not intervene in asymptotic results.
 This section therefore provides convergence results on sequences generalizing the Fibonacci sequence.
 Unfortunately, even though using the same notion for the ratio as in the Fibonacci sequence would be the most intuitive, the limit of $\frac{t(G+1)}{t(G)}$ when $G$ tends to infinity may not exist in our case.
 Indeed, consider variable $(2, 2)$, and the limit of these two subsequences:
 \begin{align*}
  \lim_{\substack{G \rightarrow \infty \\ G \text{ odd}}} \frac{t(G+1)}{t(G)} = 1 && \lim_{\substack{G \rightarrow \infty \\ G \text{ even}}} \frac{t(G+1)}{t(G)} = 2.
 \end{align*}
 In this example, the ratio $\frac{t(G+1)}{t(G)}$ does not converge as $G$ tends to infinity.
 However, there exists another definition of the ratio, that can be shown to converge.
 We use the following definition:
 \begin{definition}
 \label{def:ratio-onevar}
  The ratio of a variable is
 \begin{equation*}
  \varphi = \lim_{G \rightarrow \infty} \sqrt[l]{\frac{t(G+l)}{t(G)}}.
 \end{equation*} 
 \end{definition}
 With this definition, variable $(2, 2)$ has ratio $\varphi = \sqrt{2}$.
 This means that, asymptotically, the number of nodes doubles for every two additional units of gap to close.
 
 Since the ratio $\varphi$ indicates the growth rate of a tree in the SVB setting, a variable with a small ratio requires a smaller SVB tree than one with a bigger ratio, given a large enough gap $G$.
 The ratios of the two variables taken as example in Section \ref{sec:motiv-onevar}, namely $(10, 10)$ and $(2, 49)$, are approximately 1.071 and 1.049, respectively.
 This explains why variable $(2, 49)$ produces smaller trees than $(10,10)$ for large enough gaps.
 Throughout this section we show that the limit defining the ratio $\varphi$ in Definition \ref{def:ratio-onevar} always exists and how it can be computed.
We first prove Proposition \ref{prop:phi with l or r}, which gives an equivalent way of defining $\varphi$.
\begin{proposition}
 If the limit $\varphi$ exists, then we also have
 \begin{equation*}
  \varphi = \lim_{G \rightarrow \infty} \sqrt[r]{\frac{t(G+r)}{t(G)}}.
 \end{equation*}
 \label{prop:phi with l or r}
\end{proposition}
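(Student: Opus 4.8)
The plan is to assume the limit $\varphi = \lim_{G\to\infty}\sqrt[l]{t(G+l)/t(G)}$ exists and to prove, in two stages, first that the analogous $r$-limit exists at all, and then that its value must be $\varphi^r$. The naive idea---to relate an $r$-shift of $t$ to $l$-shifts directly---cannot work, because $r$ need not be a multiple of $l$; indeed the example $(2,2)$ already shows that the single-step ratio $t(G+1)/t(G)$ need not converge, so one cannot simply chain single steps. The whole difficulty is concentrated in reconciling the two step sizes, and I expect this to be the main obstacle.

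First I would record some elementary facts from the recurrence \eqref{eq:onevar-t}. Since $l\le r$, $t$ is nondecreasing, so every ratio $t(G+l)/t(G)\ge 1$ and thus $\varphi\ge 1$. From $t(G)=1+t(G-l)+t(G-r)\ge 1+2t(G-r)$ one gets $t(G)\ge 2\,t(G-r)$, hence $t(G)$ grows at least like $2^{G/r}$; in particular $t(G)\to\infty$, so $1/t(G)\to 0$. This exponential lower bound is incompatible with $\varphi=1$, which would force merely subexponential growth along arithmetic progressions of step $l$. Hence in fact $\varphi>1$; this strict inequality is exactly what later lets me invert a quantity of the form $1-\varphi^{-l}$.

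The first stage is to establish existence of the $r$-limit. Dividing the recurrence by $t(G)$ gives $\frac{t(G-r)}{t(G)} = 1 - \frac{1}{t(G)} - \frac{t(G-l)}{t(G)}$. The right-hand side involves only the $l$-ratio and $1/t(G)$, both of which converge, so $\frac{t(G-r)}{t(G)}\to 1-\varphi^{-l}$, a limit lying in $(0,1)$ since $\varphi>1$. Inverting (legitimate because the limit is nonzero) and shifting $G\mapsto G+r$ yields that $\beta := \lim_{G\to\infty} t(G+r)/t(G)$ exists, is finite and positive, and equals $1/(1-\varphi^{-l})$.

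The second stage identifies $\beta$. The key device is to evaluate the growth of $t$ over the common multiple $lr$ in two ways. Telescoping $t(G+lr)/t(G)$ into $r$ consecutive $l$-steps and letting $G\to\infty$ gives $(\varphi^l)^r=\varphi^{lr}$; telescoping the same quantity into $l$ consecutive $r$-steps gives $\beta^l$. Equating the two limits, $\beta^l=\varphi^{lr}=(\varphi^r)^l$, and positivity of both sides forces $\beta=\varphi^r$. Consequently $\lim_{G\to\infty}\sqrt[r]{t(G+r)/t(G)}=\beta^{1/r}=\varphi$, which is the claim. As a consistency check, combining the two stages also shows $\varphi^r=1/(1-\varphi^{-l})$, i.e. $\varphi$ satisfies $\varphi^{r}=\varphi^{r-l}+1$, the characteristic equation one expects for this recurrence.
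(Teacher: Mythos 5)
Your proof is correct, and it shares the paper's telescoping core while adding a genuinely new ingredient that the paper's own proof lacks. The paper argues purely by telescoping: it writes $\varphi^{lr}=\lim_{G\to\infty} t(G+lr)/t(G)$ by chaining $r$ consecutive $l$-steps, then re-expands the same quantity into $l$ consecutive $r$-steps and concludes it equals $\bigl(\lim_{G\to\infty} t(G+r)/t(G)\bigr)^l$. That last step silently splits a limit of a product into a product of limits, which presupposes that $\lim_{G\to\infty} t(G+r)/t(G)$ exists --- and existence of the $r$-limit is precisely the part of the conclusion that is not hypothesized. Your Stage 1 supplies exactly this missing piece: dividing the recurrence \eqref{eq:onevar-t} by $t(G)$ gives $t(G-r)/t(G)=1-1/t(G)-t(G-l)/t(G)\to 1-\varphi^{-l}$, and inverting is legitimate because your growth bound $t(G)\geq 2\,t(G-r)$ forces $\varphi>1$, hence $1-\varphi^{-l}\in(0,1)$. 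With existence in hand, your Stage 2 is the paper's double-telescoping identification, now fully justified. What the paper's version buys is brevity; what yours buys is rigor on the existence question, plus a free byproduct: the identity $\varphi^r=1/(1-\varphi^{-l})$, i.e.\ the characteristic equation $\varphi^{r}=\varphi^{r-l}+1$, which the paper only obtains later via Theorem \ref{th:onevar-cvg}. (One small wording nit: monotonicity of $t$ follows by induction on the recurrence, not from $l\leq r$; the inequality $l\leq r$ is what you actually use to get $t(G-l)\geq t(G-r)$ and hence the factor-of-two growth.)
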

\begin{proof}
 \begin{align*}
  \varphi^{l r} 
  & = \left(\lim_{G \rightarrow \infty} \frac{t(G+l)}{t(G)}\right)^r\\
  & = \lim_{G \rightarrow \infty} \frac{t(G+l)}{t(G)} \times\frac{t(G+l)}{t(G)} \times \dots \times \frac{t(G+l)}{t(G)}\\
  & = \lim_{G \rightarrow \infty} \frac{t(G+l r)}{t(G + l (r-1))} \times\frac{t(G + l (r-1))}{t(G + l (r-2))} \times \dots \times \frac{t(G + l)}{t(G)}\\
  & = \lim_{G \rightarrow \infty} \frac{t(G+l r)}{t(G)} \\
  & = \lim_{G \rightarrow \infty} \frac{t(G+l r)}{t(G + (l-1) r))} \times\frac{t(G +(l-1) r)}{t(G + (l-2) r)} \times \dots \times \frac{t(G + r)}{t(G)}\\
  & =  \left(\lim_{G \rightarrow \infty} \frac{t(G+r)}{t(G)}\right)^l .
 \end{align*}
\end{proof}

\begin{proposition}
 If the limit $\varphi$ exists, then $\sqrt[r]{2} \leq \varphi \leq \sqrt[l]{2}$.
 \label{prop:phibounds}
\end{proposition}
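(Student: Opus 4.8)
The plan is to derive both inequalities directly from the defining recurrence \eqref{eq:onevar-t}, together with the fact that $t$ is nondecreasing, using the two equivalent expressions for $\varphi$ supplied by Proposition \ref{prop:phi with l or r}. First I would record the \emph{monotonicity} of $t$: for $G' \le G$ we have $t(G') \le t(G)$. This follows by a straightforward strong induction on $G$ --- for $G \le 0$ the function is the constant $1$, and for $G>0$ each of the two recursive terms $t(G-l)$ and $t(G-r)$ dominates the corresponding term of $t(G-1)$ by the induction hypothesis (since $l,r \ge 1$). Monotonicity, combined with $l \le r$, yields the pointwise inequality $t(G-r) \le t(G-l)$ that drives the whole argument.

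For the \textbf{upper bound} I would use the $l$-form of $\varphi$. From \eqref{eq:onevar-t} and $t(G-r)\le t(G-l)$,
\begin{equation*}
 t(G) = 1 + t(G-l) + t(G-r) \le 1 + 2\,t(G-l).
\end{equation*}
Dividing by $t(G-l)$ and shifting $G \mapsto G+l$ gives $\frac{t(G+l)}{t(G)} \le \frac{1}{t(G)} + 2$; since $t(G)\to\infty$, letting $G\to\infty$ yields $\varphi^{\,l} \le 2$, i.e. $\varphi \le \sqrt[l]{2}$.

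For the \textbf{lower bound} I would instead use the $r$-form of $\varphi$ guaranteed by Proposition \ref{prop:phi with l or r}. Using the reverse reading $t(G-l) \ge t(G-r)$ of the same monotonicity fact,
\begin{equation*}
 t(G) = 1 + t(G-l) + t(G-r) \ge 1 + 2\,t(G-r) > 2\,t(G-r).
\end{equation*}
Dividing by $t(G-r)$ and shifting $G \mapsto G+r$ gives $\frac{t(G+r)}{t(G)} \ge 2$, so passing to the limit yields $\varphi^{\,r} \ge 2$, i.e. $\varphi \ge \sqrt[r]{2}$.

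The argument is essentially routine once monotonicity is in hand; the only delicate point is making sure the additive constant $1$ in the recurrence does not survive in the limit, which is exactly why I divide by the growing quantity $t(G-l)$ (resp. $t(G-r)$) before passing to the limit. Accordingly, the main (minor) obstacle is verifying the monotonicity of $t$ cleanly across the boundary $G=0$, where the recurrence switches to the constant regime. As a sanity check, the bracket is nonempty because $\sqrt[r]{2}\le\sqrt[l]{2}$ whenever $l\le r$.
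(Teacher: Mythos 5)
Your proof is correct, but it derives the two key inequalities by a different mechanism than the paper. The paper argues structurally on trees: writing $T$, $T^l$, $T^r$ for the minimal trees closing gaps $G$, $G+l$, $G+r$, it notes that $T$ is a common subtree of the other two, that every leaf of $T$ must be an \emph{inner} node of $T^r$ (its gap is strictly below $G+r$, since its parent's gap is below $G$), and that every leaf of $T$ is either a leaf of $T^l$ or the parent of two leaves of $T^l$; counting nodes gives $t(G+r) \geq 2t(G)+1$ and $t(G+l) \leq 2t(G)+1$, whence the bounds in the limit. You reach exactly the same pair of inequalities, but algebraically from the recurrence \eqref{eq:onevar-t} combined with an auxiliary monotonicity lemma ($t$ nondecreasing, hence $t(G-r) \leq t(G-l)$), and you then dispose of the additive constant explicitly by dividing and letting $G \to \infty$. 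Both routes are sound. The paper's argument is shorter and conveys the structural picture (each added $r$ of gap forces every leaf to split), but it compresses the limiting step into the phrase ``asymptotically doubles''; your version is more self-contained in that it never reasons about tree shapes, at the modest cost of proving monotonicity, and it makes the asymptotic bookkeeping explicit. One small observation: your lower bound does not even need $t(G)\to\infty$, since $t(G+r) \geq 1 + 2t(G) > 2t(G)$ already gives $\frac{t(G+r)}{t(G)} > 2$ for every $G$; only the upper bound uses unboundedness of $t$, which is immediate from $t(G) \geq 1 + t(G-l)$.
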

\begin{proof}
Let $T$ (resp. $T^l$, $T^r$) be the tree that closes the gap $G$ (resp. $G+l$, $G+r$), and observe that the tree $T$ is a sub-tree of $T^l$ and $T^r$ (all three have the same root).
First, each leaf of $T$ is an inner node of $T^r$.
This means that the number of nodes at least asymptotically doubles, thus $2$ is a lower bound on $\varphi^r$.
Second, each leaf of $T$ is either a leaf or the parent of two leaves in $T^l$, hence $2$ is an upper bound on $\varphi^l$.
\end{proof}
The limit $\varphi$ clearly exists in the case $l=r$, and the bounds in the above property provide the exact value.
 We now prove that the limit $\varphi$ exists in general:
 \begin{theorem}
 When $G$ tends to infinity, both sequences $\sqrt[l]{\frac{t(G+l)}{t(G)}}$ and $\sqrt[r]{\frac{t(G+r)}{t(G)}}$ converge to $\varphi$, which is the unique root greater than $1$ of the equation $p(x)= x^r - x^{r-l} -1=0$. 
 \label{th:onevar-cvg}
 \end{theorem}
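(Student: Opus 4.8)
The plan is to convert the inhomogeneous recurrence \eqref{eq:onevar-t} into a clean homogeneous one and then study its normalized growth rate. First I would pass to the shifted sequence $u(G)=t(G)+1$, which absorbs the additive constant and satisfies $u(G)=u(G-l)+u(G-r)$ for $G>0$, with $u(G)=2$ for $G\le 0$ (equivalently $u=2L$, where $L$ counts leaves). Since $u(G)\to\infty$, the sequences $\sqrt[l]{t(G+l)/t(G)}$ and $\sqrt[l]{u(G+l)/u(G)}$ share the same limit, so it suffices to analyze $u$.

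Next I would identify $\varphi$ as a root. Dividing $p(x)=x^{r}-x^{r-l}-1$ by $x^{r}$ shows its nonzero roots are exactly the solutions of $x^{-l}+x^{-r}=1$; and on $(1,\infty)$ one has $p(1)=-1<0$, $p(x)\to\infty$, and $p'(x)=x^{r-l-1}\bigl(r x^{l}-(r-l)\bigr)>0$, so there is a \emph{unique} real root $\varphi>1$, satisfying $\varphi^{-l}+\varphi^{-r}=1$. Setting $a_G=u(G)/\varphi^{G}$ and writing $\alpha=\varphi^{-l}$, $\beta=\varphi^{-r}$ turns the recurrence into $a_G=\alpha a_{G-l}+\beta a_{G-r}$ with $\alpha,\beta>0$ and $\alpha+\beta=1$: each $a_G$ is a \emph{convex combination} of two of the $r$ preceding values. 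Hence the running window maximum $M_G=\max_{G-r\le j<G}a_j$ is non-increasing and the window minimum $m_G$ is non-decreasing; both are positive (as $u>0$) and bounded, so they converge to $M^{\star}\ge m^{\star}>0$, which equal $\limsup a_G$ and $\liminf a_G$.

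To finish I must show the $l$- and $r$-step ratios converge, and the hardest step will be establishing genuine convergence of $a_G$ in the \emph{aperiodic} case. Because the recurrence only links indices differing by $l$ or $r$, it preserves residues modulo $d=\gcd(l,r)$, and within a class the steps become the coprime pair $l/d,\,r/d$. I would reduce to the coprime case $\gcd(l,r)=1$ and prove $M^{\star}=m^{\star}$ by contradiction: along a subsequence $a_{G_n}\to M^{\star}$, the identity $a_{G_n}=\alpha a_{G_n-l}+\beta a_{G_n-r}$ together with $a_{G_n-l},a_{G_n-r}\le M_{G_n}\to M^{\star}$ and positivity of $\alpha,\beta$ forces both $a_{G_n-l}\to M^{\star}$ and $a_{G_n-r}\to M^{\star}$; iterating a fixed number of steps yields $a_{G_n-s}\to M^{\star}$ for every $s$ in the numerical semigroup $\langle l,r\rangle$. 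By the Frobenius (Chicken McNugget) theorem this semigroup contains all integers $\ge(l-1)(r-1)$, hence a block of $r$ consecutive offsets, so the window minimum over that block tends to $M^{\star}$, contradicting $m^{\star}<M^{\star}$. The averaging-forces-both-components estimate and the semigroup coverage are the technical core.

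With convergence in each residue class in hand, and since $d\mid l$ and $d\mid r$ put $G$, $G+l$, $G+r$ in the \emph{same} class with a common positive class-limit, I obtain $a_{G+l}/a_G\to 1$ and $a_{G+r}/a_G\to 1$. Therefore
\[
\sqrt[l]{\frac{u(G+l)}{u(G)}}=\varphi\,\sqrt[l]{\frac{a_{G+l}}{a_G}}\longrightarrow\varphi,\qquad
\sqrt[r]{\frac{u(G+r)}{u(G)}}=\varphi\,\sqrt[r]{\frac{a_{G+r}}{a_G}}\longrightarrow\varphi,
\]
and since $t(G)=u(G)-1$ with $u(G)\to\infty$, the identical limits hold with $t$ in place of $u$, which is exactly the claim.
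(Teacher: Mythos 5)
Your proof is correct, but it follows a genuinely different route from the paper's. The paper never normalizes the sequence: it works directly with the four quantities $\underline{L},\bar{L},\underline{R},\bar{R}$ (the $\liminf$/$\limsup$ of the $l$-step and $r$-step ratios of $t$), uses the recurrence to derive the coupled identities $\underline{L}=1+\tfrac{1}{\underline{R}-1}$, $\bar{L}=1+\tfrac{1}{\bar{R}-1}$ and the inequalities $\underline{R}\geq f(\bar{R})$, $\bar{R}\leq f(\underline{R})$ with $f(x)=1+\tfrac{1}{x^{l/r}-1}$, and then sandwiches $\underline{R}$ and $\bar{R}$ between two monotone sequences $\alpha_n\nearrow$, $\omega_n\searrow$ generated by iterating $f$; uniqueness of the fixed point (equivalent, after the substitution $X=x^{1/r}$, to Theorem \ref{th:pol}) forces $\underline{R}=\bar{R}=\varphi^r$. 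You instead homogenize ($u=t+1$), renormalize by $\varphi^G$ so that the recurrence becomes the convex average $a_G=\varphi^{-l}a_{G-l}+\varphi^{-r}a_{G-r}$ (the weights sum to $1$ precisely because $p(\varphi)=0$), and prove convergence of $a_G$ itself in each residue class modulo $\gcd(l,r)$ via the sliding-window max/min monotonicity, the averaging-forces-both-components estimate, and Frobenius coverage of the numerical semigroup $\langle l,r\rangle$ -- a Perron/renewal-type argument. Each approach buys something the other does not: the paper's sandwich sequences are exactly the iterates analyzed in Corollary \ref{cor:fpeq}, so the convergence proof doubles as a correctness proof of the numerical method for computing $\varphi$; your argument yields the strictly stronger conclusion $t(G)=(c_{G\bmod d}+o(1))\,\varphi^{G}$ per residue class, which directly justifies the approximation $t(G)\approx\varphi^{G-F}t(F)$ used later in the paper and transparently explains the oscillation in the $(2,2)$ example that motivated Definition \ref{def:ratio-onevar}. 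The two technical steps you flag as the core (the convexity estimate along a subsequence achieving $M^{\star}$, and the Chicken McNugget block of $r$ consecutive representable offsets contradicting $m^{\star}<M^{\star}$) are both sound, and the reduction to the coprime case and back is handled correctly since $d$ divides both $l$ and $r$, so $G$, $G+l$, $G+r$ always lie in the same class.
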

 \begin{proof}
  See Appendix \ref{app-sec:proof-onevar} and Theorem \ref{th:pol}.
 \end{proof}

\begin{corollary}
A numerical approximation of $\varphi^r$ is given by the fixed-point iteration
\begin{equation*}
 f(x) = 1  + \frac{1}{x^\frac{l}{r} -1}
\end{equation*}
with starting point $x=2$.
\label{cor:fpeq}
\end{corollary}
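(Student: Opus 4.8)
The plan is first to read off $f$ from the polynomial characterisation of $\varphi$ and check that $\varphi^r$ is one of its fixed points, and then to argue that the iteration started at $x=2$ converges to it. By Theorem~\ref{th:onevar-cvg}, $\varphi$ is the unique root exceeding $1$ of $p(x)=x^r-x^{r-l}-1$, so $\varphi^r-\varphi^{r-l}=1$. Factoring out $\varphi^{r-l}$ gives $\varphi^{r-l}(\varphi^l-1)=1$, whence $\varphi^{r-l}=1/(\varphi^l-1)$, and substituting into $\varphi^r=1+\varphi^{r-l}$ yields
\[
 \varphi^r=1+\frac{1}{\varphi^l-1}=1+\frac{1}{(\varphi^r)^{l/r}-1}.
\]
Setting $y=\varphi^r$ this is precisely $y=f(y)$. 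Thus the algebraic core of the corollary is nothing more than a rearrangement of $p(\varphi)=0$, and it identifies $\varphi^r$ as a fixed point of $f$.

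Next I would locate the fixed point and justify the starting value. Proposition~\ref{prop:phibounds} gives $\sqrt[r]{2}\le\varphi\le\sqrt[l]{2}$, hence $\varphi^r\ge 2$ and $\varphi^l\le 2$; so $x=2$ is a safe under-estimate of the target that also keeps the first iterate inside the domain $(1,\infty)$ on which $f$ is defined. The function $f(x)=1+(x^{l/r}-1)^{-1}$ is strictly \emph{decreasing} there, so the iterates do not converge monotonically but instead straddle $\varphi^r$: from $x_0=2\le\varphi^r$ one gets $x_1=f(x_0)\ge f(\varphi^r)=\varphi^r$, then $x_2\le\varphi^r$, and so on. Consequently the even- and odd-indexed iterates are orbits of the \emph{increasing} map $f\circ f$; each is monotone and bounded and therefore converges to a fixed point of $f\circ f$, the two limits bracketing $\varphi^r$.

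The main obstacle is to show that these two limits coincide with $\varphi^r$, i.e. that $f$ admits no genuine two-cycle straddling its fixed point. I would establish this through the local attractivity estimate
\[
 f'(\varphi^r)=-\frac{(l/r)\,\varphi^{l-r}}{(\varphi^l-1)^2}=-\frac{l/r}{\varphi^l-1},
\]
where the last equality uses $\varphi^{l-r}=\varphi^l-1$, itself a consequence of $\varphi^{r-l}(\varphi^l-1)=1$. Attractivity, $|f'(\varphi^r)|<1$, is therefore equivalent to $\varphi^l-1>l/r$. Proving this uniformly is the delicate point: writing $t=\varphi^l-1\in(0,1)$ and eliminating $r/l$ through $p(\varphi)=0$ turns it into the elementary inequality $(1-t)\ln(1+t)<-t\ln t$ on $(0,1)$, which I would settle by a sign analysis of $h(t)=-t\ln t-(1-t)\ln(1+t)$ (note $h(0^+)=h(1^-)=0$, so the work is to show $h>0$ in between). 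Combined with a global sign argument for $(f\circ f)(x)-x$ ruling out a two-cycle, this forces both subsequences to the single attracting fixed point $\varphi^r$. Finally I would dispose of the degenerate case $l=r$ separately, where $\varphi^r=2$ exactly and the iteration already starts at the answer; here $f'(\varphi^r)=-1$ is only neutrally stable, so the contraction argument genuinely requires $l<r$.
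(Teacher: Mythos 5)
Your setup is sound and your algebra is correct: $\varphi^r$ is a fixed point of $f$ (a rearrangement of $p(\varphi)=0$), the iterates from $x_0=2$ straddle $\varphi^r$ because $f$ is decreasing, the even and odd subsequences are monotone orbits of the increasing map $f\circ f$, and they converge to limits $A\le\varphi^r\le W$ with $W=f(A)$, $A=f(W)$. You also name the crux correctly: one must show $A=W$, i.e.\ that $f$ has no genuine two-cycle bracketing $\varphi^r$. But this is exactly the step your proposal never carries out. The local attractivity estimate $\vert f'(\varphi^r)\vert = (l/r)/(\varphi^l-1)<1$ (your reduction of this to $(1-t)\ln(1+t)<-t\ln t$ is correct) is a statement about a neighborhood of $\varphi^r$ and cannot exclude a two-cycle at finite distance from it, while the starting point $2$ is in general far from the fixed point (for $(l,r)=(1,60)$ one has $\varphi^r\approx 20$). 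The ``global sign argument for $(f\circ f)(x)-x$'' that you invoke in a single clause is the entire difficulty, and it is not supplied; note also that if you had it, the attractivity computation would be superfluous, so your proof's weight rests on the one ingredient that is missing. The gap is closable: a two-cycle satisfies $(A^{l/r}-1)(W-1)=(W^{l/r}-1)(A-1)=1$, hence $\frac{A^{l/r}-1}{A-1}=\frac{W^{l/r}-1}{W-1}$, and for $l<r$ the map $x\mapsto\frac{x^{l/r}-1}{x-1}$ is strictly decreasing on $(1,\infty)$ (it is the chord slope of the strictly concave function $u\mapsto u^{l/r}$), which forces $A=W$. Your observation that $l=r$ makes $f'(\varphi^r)=-1$ and must be treated separately is correct and is in fact stronger than you say: for $l=r$, $f(x)=x/(x-1)$ is an involution, so every point is $2$-periodic, confirming that any valid argument must genuinely use $l<r$.

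For comparison, the paper proves the corollary without any of this dynamical analysis: it observes that the iteration $(2, f(2), f(f(2)),\dots)$ interleaves the two sequences $\alpha_n$ and $\omega_n$ ($\alpha_0=2$, $\omega_n=f(\alpha_n)$, $\alpha_{n+1}=f(\omega_n)$) that were already constructed in the proof of Theorem \ref{th:onevar-cvg}, where their convergence to $\varphi^r$ was obtained by combining their monotonicity, the sandwich $\alpha_n\le\underline{R}\le\bar{R}\le\omega_n$ against the liminf and limsup of the tree-size ratios, and the uniqueness of the fixed point of $f$ (via Theorem \ref{th:pol}). So the corollary costs the paper two lines, whereas your self-contained route must re-prove global convergence of the iteration and stalls precisely at the exclusion of two-cycles.
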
 
 \begin{proof}
  See Appendix \ref{app-sec:proof-onevar}.
 \end{proof}
  
Having established Theorem \ref{th:onevar-cvg}, the definitions of $\varphi$ and $p$ should now make more sense, as they can be shown to originate more directly from the definition of the sequence:
\begin{align*}
 &t(G) = 1 + t(G-l) + t(G-r) \\
 \Rightarrow& \frac{t(G)}{t(G-r)} = \frac{1}{t(G-r)} + \frac{t(G-l)}{t(G-r)} + 1\\
 \Rightarrow &\lim_{G \rightarrow \infty}\frac{t(G)}{t(G-r)} =\lim_{G \rightarrow \infty} \left( \frac{1}{t(G-r)} + \frac{t(G-l)}{t(G)} \frac{t(G)}{t(G-r)} + 1 \right)\\
 \Rightarrow & \varphi^r = \varphi^{r-l} + 1.
 \end{align*}
 The polynomial $p:x \rightarrow x^r - x^{r-l} -1$ is the so-called \emph{characteristic polynomial} of the recurrence sequence defining $t$.
 The Abel–Ruffini theorem \cite[p. 264]{jacobson09a} states that there is no general closed-form formula for roots of polynomials with degrees five or higher.
 Since we are dealing with a particular trinomial, a formula still might exist, however we have not been able to determine one.
 Throughout the paper we therefore resort to numerical methods to determine $\varphi$ (see Section \ref{sec:sfratio}). 
 The characteristic polynomial for trees with arbitrary degrees has been discussed at length in a SAT setting in \cite{kullmann09a}, where the ratio $\varphi$ is referred to as the $\tau$-value.
 
 Examples of variables and their respective ratios are given in Table \ref{tab:phiexamples}.
 In the case where the sum of the variable gains is fixed (Table \ref{tab:phiexamples-plus}), choosing $l$ close to $r$ minimizes the ratio of a variable $(l, r)$.
 However, if the product is fixed (Table \ref{tab:phiexamples-multiply}), then the opposite is preferable.
 \begin{table}
 \centering
    \begin{subtable}{\linewidth}
 \centering
  \begin{tabular}{c|cccccccc}
   Variable & (6, 10) & (5, 11)& (4, 12) & (3, 13) & (2, 14) & (1, 15)\\
   \hline
   $\varphi$ & 1.0926 & 1.0955 & 1.1002 & 1.107 &  1.1204 & 1.1468\\
  \end{tabular}  
  \caption{Variables that satisfy $l+r=16$}
  \label{tab:phiexamples-plus}
  \end{subtable}
 
  \begin{subtable}{\linewidth}
 \centering
  \begin{tabular}{c|cccccc}
   Variable & (6, 10) & (5, 12) & (4, 15) & (3, 20) & (2, 30) & (1, 60)\\
   \hline
   $\varphi$ & 1.0926 & 1.0907 & 1.0873 & 1.0813 & 1.0709 & 1.0515\\
  \end{tabular}
  \caption{Variables that satisfy $l \times r=60$}
  \label{tab:phiexamples-multiply}
  \end{subtable}
  \caption{Ratios for some variables, truncated to 5 significant digits.}
\label{tab:phiexamples}
 \end{table}
 Figure \ref{fig:ratio-3dplot} gives a plot of the ratio as a function of the left and right gains.
 \begin{figure}
 \IfFileExists{./dat/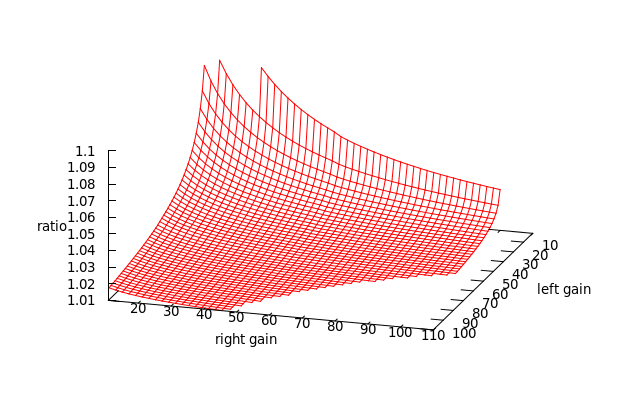}{
    \includegraphics[width=\linewidth]{./dat/ratio-3dplot.png}
 }
{
    \includegraphics[width=\linewidth]{ratio-3dplot.png}
}
 \caption{Variable ratio for left and right gains in $[10, 100]$.}
  \label{fig:ratio-3dplot}
 \end{figure}
 
 Once the value of $\varphi$ is determined, it is then easy to circumvent the $O(\log^2(k))$ complexity of the closed-form Formula \eqref{eq:onevar-cff}, by evaluating $t(F)$ for some value $F$ much smaller than $G$, and approximating $t(G)$ using the formula
 \begin{equation*}
  t(G) \approx \varphi^{G-F} t(F).
  \label{eq:onevar-ts-approx}
 \end{equation*}
 We now establish some useful properties of $p$.
 \begin{theorem}
  The characteristic polynomial $p:x \rightarrow x^r - x^{r-l} -1$ satisfies the following properties.
 \begin{itemize}
  \item $p$ is monotonically increasing in $[1, \infty)$
  \item in $[1, \infty)$, $p$ has a single real root $\varphi > 1$
 \end{itemize}
 \label{th:pol}
 \end{theorem}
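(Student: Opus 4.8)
The plan is to prove both bullet points together, first establishing strict monotonicity via the derivative and then reading off the root from monotonicity combined with the intermediate value theorem. I would start by differentiating, obtaining $p'(x) = r x^{r-1} - (r-l) x^{r-l-1}$, and factoring out the common power $x^{r-l-1}$ to write
\[
p'(x) = x^{r-l-1}\bigl(r x^{l} - (r-l)\bigr).
\]
This factorization is valid for every $x>0$ regardless of the sign of the exponent $r-l-1$, so the degenerate case $l=r$ (where $p(x)=x^{r}-2$) needs no separate treatment. For $x\geq 1$ the first factor $x^{r-l-1}$ is strictly positive, while the second satisfies $r x^{l}-(r-l)\geq r-(r-l)=l\geq 1>0$, using $x^{l}\geq 1$ and $l\geq 1$. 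Hence $p'(x)>0$ on all of $[1,\infty)$, which is exactly the first bullet.

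For the second bullet I would evaluate $p(1)=1-1-1=-1<0$ and note that, since $r\geq 1$, the leading term forces $p(x)\to+\infty$ as $x\to\infty$. As $p$ is continuous and, by the previous paragraph, strictly increasing on $[1,\infty)$, the intermediate value theorem yields a root, and strict monotonicity guarantees it is the only one in this interval. This root is the claimed $\varphi$, and $p(1)<0$ together with monotonicity forces $\varphi>1$.

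I do not expect a genuine obstacle here: the one place requiring care is the sign bookkeeping in the factorization of $p'$, specifically making sure the argument does not silently assume $r>l$ so that $r-l-1$ is nonnegative. Writing the common factor as $x^{r-l-1}$ and observing that it is positive for every $x>0$ sidesteps this, after which the whole proof reduces to the two elementary inequalities recorded above.
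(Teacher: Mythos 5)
Your proof is correct and follows essentially the same route as the paper's: differentiate, factor $p'(x) = x^{r-l-1}\bigl(r x^{l} - r + l\bigr)$, conclude positivity on $[1,\infty)$, then combine $p(1)=-1$, $p(x)\to\infty$, continuity, and monotonicity. The only (cosmetic) difference is that you bound the second factor directly by $r x^{l}-(r-l)\geq l\geq 1$, whereas the paper solves the inequality as $x > \sqrt[l]{1-\frac{l}{r}}$; both settle the sign question equally well.
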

\begin{proof}
Consider the sign of $p'(x)= r x^{r-1} - (r-l) x^{r-l-1}$ when $x \in R^+$.
\begin{align*}
 p'(x) > 0 &\Leftrightarrow x^{r-l-1} ( r x^l -r +l) >0\\
           &\Leftrightarrow r x^l -r +l > 0\\
           &\Leftrightarrow x > \sqrt[l]{1 - \frac{l}{r}}\\
           &\Leftarrow x \geq 1.
\end{align*}
We thus know that $p$ is increasing over $[1, \infty)$.
Furthermore, $p(1)=-1$ and $\lim_{x\rightarrow \infty} p(x) = \infty$, and $p$ is continuous, therefore there exists a single root $\varphi$ in $(1, \infty)$.
\end{proof}
This analysis is corroborated by the fact that the tree-size increases when incrementing the gap by $l$ or $r$, thus we should have $\varphi > 1$.

\section{The \Nvar{} problem}\label{sec:nvar}
We consider the \Nvar{} problem (\nvar{}), which naturally extends the \onevar{} problem, defined as follows:  
 \begin{customproblem}[\Nvar{}]\problemnewline{}
 \textbf{Input:} $n$ variables encoded by $(l_i, r_i), i \in \{1, \dots,n\}$, an integer $G>0$, an integer $k>0$.\\
 \textbf{Question:} Is there a \BB{} tree with at most $k$ nodes that closes the gap $G$, using each variable as many times as needed?
 \end{customproblem}
 Figure \ref{fig:treeex-nvar} illustrates the difference between \nvar{} and \onevar{}, given two variables $(2, 5)$ and $(3, 3)$, and $G=7$.
 In the \nvar{} tree, variable $(2, 5)$ is branched on at all (inner) nodes except the bottom left one, where variable $(3, 3)$ is branched on.
 
There is a recursive equation for \nvar{}, similar to \eqref{eq:onevar-t} for \onevar{}.
For a given gap $G$, $t(G)$ is defined as the minimum size of a \nvar{} tree that closes $G$:
 \begin{equation}
   t(G) = \begin{cases}
      1 &\text{ if } G \leq 0\\
      1 + \min\limits_{1\leq i \leq n}\left(t(G-l_i) + t(G-r_i)\right) &\text{ otherwise.}         
         \end{cases}
         \label{eq:nvar-t}
 \end{equation}
 The time complexity of this algorithm is $O(nG)$, which is pseudo-polynomial in the size of the input.
 Unlike \onevar{}, we do not know whether \nvar{} can be solved in polynomial time.
 For a given $G$, we say that variable $i$ is \emph{branched on} at the corresponding node if $i \in \argmin_{1\leq i \leq n}(t(G-l_i) + t(G-r_i))$.
 
 \begin{figure}
 \centering
 \begin{subfigure}[b]{.5\columnwidth}
 \centering
\begin{tikzpicture}[scale=0.5,level distance=1.5cm,
level 1/.style={sibling distance=4.5cm},
level 2/.style={sibling distance=2.5cm}]
\tikzstyle{every node}=[minimum size=.75cm, circle,draw]
 \node (a){$0$}
  child {node (b1) {$2$}
    child {node (c1) {$4$}
      child {node (d1) {$6$} 
      child {node (e1) {$8$} }
      child {node (e2) {$11$} }
      }
      child {node (d2) {$9$} }
    }
    child {node (c2) {$7$} }
  }
  child {node (b2) {$5$}
    child {node (c3) {$7$}}
    child {node (c4) {$10$}}
  };
 \end{tikzpicture}
 \caption{\onevar{} tree for variable $(2, 5)$ (size=11).}
 \end{subfigure}
  \begin{subfigure}[b]{.45\columnwidth}
  \centering
 \begin{tikzpicture}[scale=0.5,level distance=1.5cm,
level 1/.style={sibling distance=4.5cm},
level 2/.style={sibling distance=2.5cm}]
\tikzstyle{every node}=[minimum size=.75cm, circle,draw]
 \node (a){$0$}
  child {node (b1) {$2$}
    child {node (c1) {$4$}
      child {node (d1) {$7$} }
      child {node (d2) {$7$} }
    }
    child {node (c2) {$7$} }
  }
  child {node (b2) {$5$}
    child {node (c3) {$7$}}
    child {node (c4) {$10$}}
  };
 \end{tikzpicture}
 \caption{Minimum-size \nvar{} tree (size=9).}
 \end{subfigure}
  \begin{subfigure}[b]{1\columnwidth}
  \centering
 \begin{tikzpicture}[scale=0.5,level distance=1.5cm,
level 1/.style={sibling distance=8cm},
  level 2/.style={sibling distance=4cm},
  level 3/.style={sibling distance=2cm}]
\tikzstyle{every node}=[minimum size=.75cm, circle,draw]
 \node (a){$0$}
  child {node (b1) {$3$}
    child {node (c1) {$6$}
      child {node (d1) {$9$} }
      child {node (d2) {$9$} }
    }
    child {node (c2) {$6$}
      child {node (d3) {$9$} }
      child {node (d4) {$9$} }
    }
  }
  child {node (b2) {$3$}
    child {node (c3) {$6$}
      child {node (d5) {$9$} }
      child {node (d6) {$9$} }
    }
    child {node (c4) {$6$}
      child {node (d7) {$9$} }
      child {node (d8) {$9$} }
    }
  };
 \end{tikzpicture}
  \caption{\onevar{} tree for variable $(3, 3)$ (size=15).}
 \end{subfigure}
 
 \caption{For a gap $G=7$, \onevar{} trees using variable $(2, 5)$ or $(3, 3)$, and minimum-size \nvar{} tree using both.}
 \label{fig:treeex-nvar}
 \end{figure}
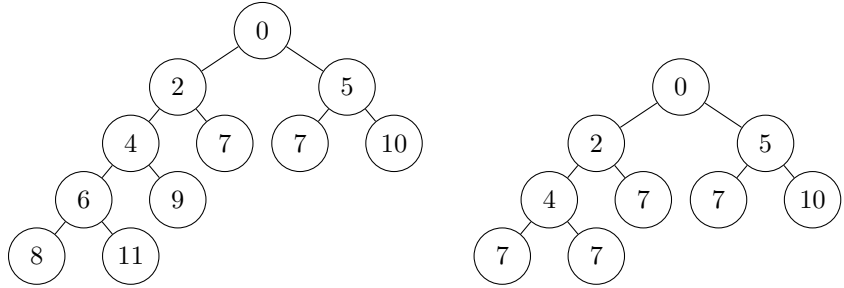
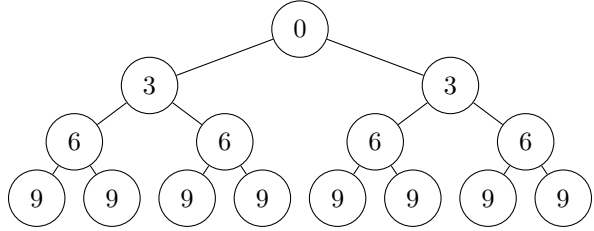
 
 \subsection{Motivating example (continued)}
 Following the example introduced in Section \ref{sec:motiv-onevar} for the \onevar{} problem, consider now the plot of the \nvar{} tree-sizes in Figure \ref{fig:treesize-ex-nvar} (the \onevar{} plots are the same as in Figure \ref{fig:treesize-ex-onevar}).
 
 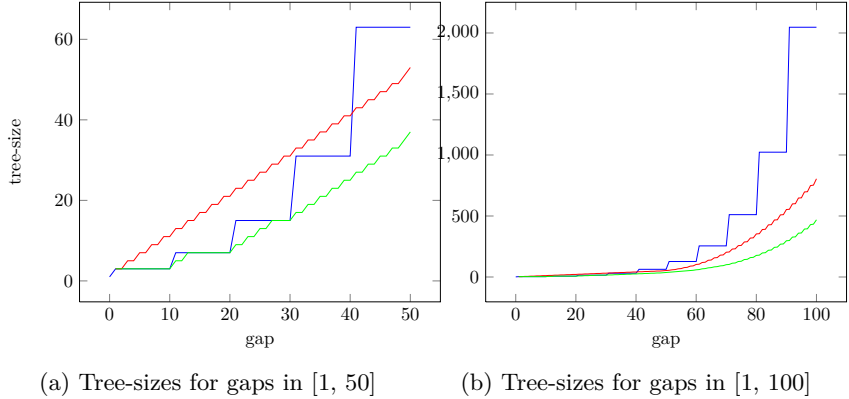
\begin{figure}
\centering
\begin{subfigure}{.45\columnwidth}
  \begin{tikzpicture}[scale=.7]
  \begin{axis}[xlabel={gap},ylabel={tree-size},]
  \addplot[blue] table {\tententofifty};
  \addplot[red] table {\twofortyninetofifty};
  \addplot[green] table {\twofortyninetententofifty};
  \end{axis}
 
  \end{tikzpicture}
  \caption{Tree-sizes for gaps in [1, 50]}
\end{subfigure}
\begin{subfigure}{.1\columnwidth}
\end{subfigure}
\begin{subfigure}{.45\columnwidth}
  \begin{tikzpicture}[scale=.7]
  \begin{axis}[xlabel={gap}]
  \addplot[blue] table {\tententohundred};
  \addplot[red] table {\twofortyninetohundred};
  \addplot[green] table {\twofortyninetententohundred};
  \end{axis}
  \end{tikzpicture}
  \caption{Tree-sizes for gaps in [1, 100]}
\end{subfigure}
\caption{Plot of the sizes of the trees built with variables $(10,10)$ (in blue) and $(2, 49)$ (in red), and both (in green).}
\label{fig:treesize-ex-nvar}
\end{figure}

As anticipated, for a given gap, the \nvar{} tree-size is smaller than each of the \onevar{} tree-sizes.
However, the difference in tree-sizes with $(2, 49)$ does not seem to increase drastically.
Indeed, for a gap of 1000, the \onevar{} tree-size of $(2, 49)$ is only $1.798$ times larger than the \nvar{} tree-size (and this ratio is approximately constant for larger gaps).
In fact, for this instance, we can verify experimentally that variable $(2, 49)$ is branched on at every node for which the gap to close is at least 31.
This phenomenon is the subject of the next section.
 
 \subsection{Asymptotic study}
 Let $\varphi$ (resp. $\varphi_i$) now denote the ratio associated with the recursive equation \eqref{eq:nvar-t} of \nvar{} (resp., for \onevar{}, equation \eqref{eq:onevar-t} for variable $i$).
 Furthermore, for a variable $i$, let $p_i$ be its characteristic polynomial, for $i=1, \dots, n$.
 Formally, we define the ratio for a \nvar{} problem to be
  \begin{equation*}
  \varphi = \lim_{G \rightarrow \infty} \sqrt[z]{\frac{t(G+z)}{t(G)}}
 \end{equation*} 
 where $z$ is the least common multiple of all $l_i$ and $r_i$.
 \begin{theorem}\label{th:nvar-ratio}
  $\varphi = \min_i \varphi_i$
 \end{theorem}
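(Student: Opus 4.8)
The plan is to sandwich the MVB tree-size $t(G)$ defined by \eqref{eq:nvar-t} between the size obtained by using only the single best variable and an explicit exponential lower bound, and then to read off the common growth rate. Write $\varphi^{*} = \min_i \varphi_i$ and let $i^{*}$ be an index attaining it, so $\varphi_{i^{*}} = \varphi^{*}$. For the \textbf{upper bound}, I would observe that because the recurrence \eqref{eq:nvar-t} takes a minimum over all variables at every node, branching on $i^{*}$ everywhere produces one feasible tree; hence $t(G) \le t_{i^{*}}(G)$, where $t_{i^{*}}$ is the \onevar{} tree-size \eqref{eq:onevar-t} for variable $i^{*}$. By Theorem \ref{th:onevar-cvg}, $t_{i^{*}}$ grows at rate $\varphi_{i^{*}} = \varphi^{*}$, so there is a constant $C$ with $t(G) \le t_{i^{*}}(G) \le C\,(\varphi^{*})^{G}$ for all large $G$.

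The crux is the matching \textbf{lower bound}, which I would prove by a weighting (potential) argument. First I rewrite each characteristic equation: the root condition $\varphi_i^{r_i} = \varphi_i^{r_i - l_i} + 1$ is, after dividing by $\varphi_i^{r_i}$, equivalent to $\varphi_i^{-l_i} + \varphi_i^{-r_i} = 1$. Since $x \mapsto x^{-l_i} + x^{-r_i}$ is strictly decreasing on $(0,\infty)$ and $\varphi^{*} \le \varphi_i$, I get $(\varphi^{*})^{-l_i} + (\varphi^{*})^{-r_i} \ge 1$ for every $i$. Now take any tree $T$ that closes $G$ and assign each node $v$ the weight $w(v) = (\varphi^{*})^{-g_v}$, so the root has weight $1$. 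If $v$ is an inner node branching on $i$, its children carry gaps $g_v + l_i$ and $g_v + r_i$, so the children's weights sum to $w(v)\bigl((\varphi^{*})^{-l_i} + (\varphi^{*})^{-r_i}\bigr) \ge w(v)$. A bottom-up induction over $T$ then gives $\sum_{\ell \text{ leaf}} w(\ell) \ge w(\text{root}) = 1$. Every leaf satisfies $g_\ell \ge G$, hence $w(\ell) \le (\varphi^{*})^{-G}$, so the number of leaves is at least $(\varphi^{*})^{G}$, and therefore $t(G) \ge (\varphi^{*})^{G}$.

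Combining the two bounds yields $(\varphi^{*})^{G} \le t(G) \le C\,(\varphi^{*})^{G}$ for large $G$, so $t(G)^{1/G} \to \varphi^{*}$. To conclude I match this growth rate to the limit defining $\varphi$ in the \nvar{} setting: writing $G = G_0 + m z$ and telescoping $t(G_0 + mz)$ as a product of the ratios $t(G+z)/t(G)$, the geometric mean of these ratios forces $t(G)^{1/G} \to \varphi$ along each residue class modulo $z$, so $\varphi = \lim_G t(G)^{1/G} = \varphi^{*} = \min_i \varphi_i$. The main obstacle I anticipate is twofold: getting the weighting inequality exactly right (it hinges on the reformulation $\varphi_i^{-l_i}+\varphi_i^{-r_i}=1$ together with monotonicity, and on the fact that leaves overshoot $G$ only boundedly), and rigorously establishing that the defining limit $\varphi$ exists at all. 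The latter does not follow from the two-sided bound alone, since the constant $C$ does not vanish in the ratio; I would handle it by the same convergence machinery used for \onevar{} in Theorem \ref{th:onevar-cvg}, after which the sandwich pins down its value.
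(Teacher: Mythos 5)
Your two bounds are essentially sound, and your lower bound is a genuinely nice argument that does not appear in the paper: rewriting the characteristic equation as $\varphi_i^{-l_i}+\varphi_i^{-r_i}=1$, using monotonicity of $x\mapsto x^{-l_i}+x^{-r_i}$ to get $(\varphi^{*})^{-l_i}+(\varphi^{*})^{-r_i}\geq 1$ for every $i$, and running the bottom-up induction on the node weights $w(v)=(\varphi^{*})^{-g_v}$ rigorously yields $t(G)\geq(\varphi^{*})^{G}$ (this is the $\tau$-value argument of the SAT literature the paper cites, and leaf overshoot is harmless since it only shrinks leaf weights). The upper bound needs a small repair---Theorem \ref{th:onevar-cvg} gives ratio convergence, hence only $t_{i^{*}}(G)\leq C_{\epsilon}(\varphi^{*}+\epsilon)^{G}$ for each $\epsilon>0$, not $C(\varphi^{*})^{G}$---but that slack is harmless for concluding $t(G)^{1/G}\rightarrow\varphi^{*}$.

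The genuine gap is the one you flag and then defer: the existence of $\varphi=\lim_{G\rightarrow\infty}\sqrt[z]{t(G+z)/t(G)}$. In the paper's formulation this existence \emph{is} the theorem, and your sandwich cannot produce it, exactly as you observe: $(\varphi^{*})^{G}\leq t(G)\leq C_{\epsilon}(\varphi^{*}+\epsilon)^{G}$ is compatible with $t(G+z)/t(G)$ oscillating forever, and your telescoping remark only shows that \emph{if} the ratio limit exists it must equal the root limit $\varphi^{*}$. Your proposed fix---``the same convergence machinery used for \onevar{} in Theorem \ref{th:onevar-cvg}''---does not transfer: that proof rests on exact identities such as $\underline{L}=1+1/(\underline{R}-1)$ obtained by algebraic manipulation of the single fixed recursion $t(G)=1+t(G-l)+t(G-r)$, together with the monotone fixed-point iterates $\alpha_n,\omega_n$ built from them, and the $\min$ over variables in recursion \eqref{eq:nvar-t} destroys precisely these identities. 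This is why the paper's appendix proof of Theorem \ref{th:nvar-ratio} is a different argument altogether: it introduces an auxiliary constant $\alpha$ with $\alpha^{x}\leq\liminf_{G}t(G+x)/t(G)$ for all $x\leq z$, bounds $\bar{Z}=\limsup_{G}t(G+z)/t(G)$ above by $(1+\alpha^{l_i-r_i})^{z/l_i}$ for every $i$ and $\underline{Z}=\liminf_{G}t(G+z)/t(G)$ below by $(1+\alpha^{r_j-l_j})^{z/r_j}$ for some $j$, and uses Theorem \ref{th:pol} in contradiction arguments to squeeze $\varphi_j^{z}\leq\underline{Z}\leq\bar{Z}\leq\varphi_i^{z}$ for all $i$, which delivers existence and the value $\min_i\varphi_i$ simultaneously. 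Until you supply an argument of this kind for the ratio itself, what you have proved is the weaker statement $\lim_{G}t(G)^{1/G}=\min_i\varphi_i$, not the theorem as stated.
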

 \begin{proof}
  See Appendix \ref{app-sec:proof-nvar}.
 \end{proof}
Essentially, this means that for a large $G$, the tree-size grows at the growth rate of the \emph{best} variable for \onevar{}.
Therefore we also have 
 \begin{equation*}
  t(G) \approx \varphi^{G-F} t(F)
 \end{equation*}
 for large gaps $F$ and $G$, with $G \geq F$.
Furthermore, we propose the following conjecture.
\begin{conjecture}
For each instance of \nvar{}, there exist a gap $H$ such that for all gaps greater than $H$, variable $i = \argmin_j \varphi_j$ is always branched on at the root node.
\end{conjecture}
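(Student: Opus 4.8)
The plan is to turn the inequality that defines the branching choice at the root into an asymptotic comparison governed by the ratios $\varphi_j$, using the characteristic polynomials of Section~\ref{sec:onevar} as the engine. Write $i^\star = \argmin_j \varphi_j$ and $\varphi = \varphi_{i^\star}$, and for each variable $j$ set $s_j = \varphi^{-l_j} + \varphi^{-r_j}$. The first step is a clean algebraic dichotomy. By Theorem~\ref{th:onevar-cvg}, $\varphi$ is the root exceeding $1$ of $p_{i^\star}(x)=x^{r_{i^\star}}-x^{r_{i^\star}-l_{i^\star}}-1$; dividing $p_{i^\star}(\varphi)=0$ by $\varphi^{r_{i^\star}}$ gives $s_{i^\star}=1$. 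For any $j$ with $\varphi_j>\varphi$, Theorem~\ref{th:pol} states that $p_j$ is increasing on $[1,\infty)$ with unique root $\varphi_j$, so $p_j(\varphi)<0$, and dividing by $\varphi^{r_j}$ yields $s_j>1$. Thus $i^\star$ is the unique minimizer of $s_j$; the case of ties $\varphi_j=\varphi$ (where $s_j=1$ as well) must be excluded or treated separately.

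Second, I would convert this dichotomy into the branching inequality. Branching on $j$ at the root produces children of gaps $G-l_j$ and $G-r_j$, so by the definition of \nvar{} the conjecture is equivalent to
\[
 t(G-l_{i^\star})+t(G-r_{i^\star}) < t(G-l_j)+t(G-r_j) \quad\text{for all } j\neq i^\star \text{ and all large } G.
\]
If one had the clean limit $t(G-a)/t(G)\to\varphi^{-a}$, then dividing by $t(G)$ and letting $G\to\infty$ would reduce this to $s_{i^\star}=1 < s_j$, finishing the proof. Hence everything hinges on controlling the ratios $t(G-a)/t(G)$ for the \nvar{} sequence, for which the approximation $t(G)\approx\varphi^{G-F}t(F)$ already suggests the answer.

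The main obstacle is that this limit need not exist: as the paper notes for variable $(2,2)$, $t(G+1)/t(G)$ oscillates, and the same occurs whenever $p_{i^\star}$ has several roots of maximal modulus $\varphi$. The right framework is to work modulo $d=\gcd(l_{i^\star},r_{i^\star})$, expecting $t(G)=\varphi^{G}P(G\bmod d)+o(\varphi^{G})$ for a positive $d$-periodic coefficient $P$, where $d$ is the number of roots $\varphi\zeta$ $(\zeta^d=1)$ of $p_{i^\star}$ of modulus $\varphi$. The pleasant feature is that $l_{i^\star}$ and $r_{i^\star}$ are multiples of $d$, so the periodic factor cancels for $i^\star$ and its ratio still tends cleanly to $s_{i^\star}=1$; the difficulty is that for a suboptimal $j$ the gains $l_j,r_j$ need not be multiples of $d$, so the limiting inequality becomes $P(\rho) < \varphi^{-l_j}P(\rho-l_j)+\varphi^{-r_j}P(\rho-r_j)$ for \emph{every} residue $\rho$. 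Summing over $\rho$ recovers $s_j>1$, so it holds on average, but establishing it residue by residue—that the \nvar{} min-recurrence never settles on a suboptimal variable even along an unfavourable residue class—is the crux.

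A further, intertwined difficulty is justifying the periodic asymptotic for the \nvar{} sequence itself rather than for the pure $i^\star$ recurrence: since \eqref{eq:nvar-t} is a minimum of linear recurrences, one cannot directly invoke the theory of constant-coefficient recurrences. I would try to break the circularity by first proving $t(G)=\Theta(\varphi^{G})$ (the upper bound from $t(G)\le t_{i^\star}(G)$, the lower bound from Theorem~\ref{th:nvar-ratio}), and then bootstrapping: once a suboptimal $j$ is shown to be strictly dominated for all large $G$ within a fixed residue class, the recurrence collapses there to the linear $i^\star$-recurrence, legitimising the periodic expansion. Making this bootstrap non-circular, together with the per-residue inequality above, is where I expect the real work to lie.
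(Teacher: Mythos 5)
You should first be aware of the status of this statement in the paper: it is not a theorem there but a \emph{conjecture}, stated without proof, and the authors explicitly leave it open (they remark only that proving it, together with a polynomial bound on $H$, might lead to a polynomial-time algorithm for \nvar{}). So there is no paper proof to compare your argument against; the only question is whether your argument settles the conjecture, and by your own account it does not. Your preliminary reductions are correct and sensible: writing $s_j=\varphi^{-l_j}+\varphi^{-r_j}$, the identity $s_{i^\star}=1$ follows from dividing $p_{i^\star}(\varphi)=0$ by $\varphi^{r_{i^\star}}$, the strict inequality $s_j>1$ for $\varphi_j>\varphi$ follows from Theorem \ref{th:pol} via $p_j(\varphi)<0$, and the conjecture is indeed equivalent to $t(G-l_{i^\star})+t(G-r_{i^\star})\leq t(G-l_j)+t(G-r_j)$ for all $j$ and all large $G$ (with the tie case $\varphi_j=\varphi$, which you flag but do not treat, already a loose end, since then $s_j=1$ and the dichotomy gives nothing).

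The two obstacles you name are, however, precisely the mathematical content of the conjecture, and your proposal leaves both unresolved. First, the per-residue inequality $P(\rho)<\varphi^{-l_j}P(\rho-l_j)+\varphi^{-r_j}P(\rho-r_j)$: as you observe, summing over residues recovers only the averaged statement implied by $s_j>1$, and nothing in your argument excludes a residue class modulo $d=\gcd(l_{i^\star},r_{i^\star})$ on which the inequality reverses; if it reversed for infinitely many $G$, the conjecture would fail for that instance, so this is not a technicality but the heart of the matter. Second, the periodic expansion $t(G)=\varphi^G P(G\bmod d)+o(\varphi^G)$ is a fact about constant-coefficient linear recurrences, whereas \eqref{eq:nvar-t} is a \emph{minimum} of such recurrences; your bootstrap (establish $t(G)=\Theta(\varphi^G)$, then argue the min collapses onto the $i^\star$-recurrence on each residue class) is circular exactly where it matters, because collapsing the min is the conjecture itself. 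The $\Theta$ bounds are themselves attainable --- the upper bound from $t\leq t_{i^\star}$, and a lower bound $t(G)\geq\varphi^G$ from a Kraft-type weighting argument, assigning weight $\varphi^{-l_j}$ and $\varphi^{-r_j}$ to the edges at a node where $j$ is branched and using $s_j\geq 1$ to get $\sum_{\text{leaves}}\varphi^{-g_\ell}\geq 1$ --- but two-sided exponential bounds are far too weak to force the argmin in \eqref{eq:nvar-t} to stabilize. In short, you have produced a correct set-up and an accurate map of the difficulties, not a proof; that is consistent with the fact that the paper records this statement as open.
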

Proving this conjecture (and proving that $H$ is polynomially bounded by the size of the input) may help design a polynomial-time algorithm for \nvar{}.
Figure \ref{fig:gapH} provides the value of $H$ for instances of \nvar{} with two variables, $(10, 10)$ and $(2, r)$, where $r$ is the value on the horizontal axis.
The value of $H$ is maximized for $r=29$, which also minimizes $\vert \varphi_{(10, 10)} - \varphi_{(2, r)} \vert$.

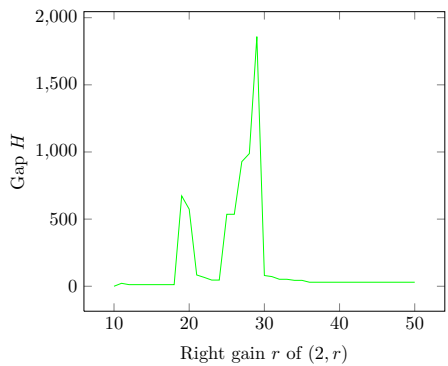
\begin{figure}
\centering
  \begin{tikzpicture}[scale=.7]
  \begin{axis}[xlabel={Right gain $r$ of $(2, r)$}, ylabel={Gap $H$},]
  \addplot[green] table[y index = 1] {\tentenvstwosomethingHvalues};
  \end{axis}
  \end{tikzpicture}
  \caption{Value of the gap $H$ for the \nvar{} problem with variable $(10, 10)$ and $(2, r)$, for varying values of $r$.}
  \label{fig:gapH}
\end{figure}

\section{The \Gap{} problem}\label{sec:gap}
The \Gap{} problem (\gap{}) is defined as follows:
 \begin{customproblem} [\Gap{}]\problemnewline{}
 \textbf{Input:} $n$ variables encoded by $(l_i, r_i), i=1, \dots,n$, an integer $G>0$, an integer $k>0$, and a vector of multiplicities $m \in \Zp^n$.\\
 \textbf{Question:} Is there a \BB{} tree with at most $k$ nodes that closes the gap $G$, branching on each variable $i$ at most $m_i$ times on each path from the root to a leaf?
 \end{customproblem}
 Observe that \gap{} comprises the input of problem \nvar{}, and additionally stipulates that each variable may not be used more than a given number of times on each path from the root to a leaf.
 Consequently, \gap{} corresponds to \nvar{} whenever the multiplicity of each input variable is large enough, therefore \gap{} generalizes \nvar{}.

 When solving a MIP instance using a \bb{} search, the minimization version of \gap{} models the problem of choosing a variable to branch on under the following simplifying hypotheses:
 \begin{itemize}
  \item The up and down gains are known for each variable, and are invariant.
  \item If the problem is feasible, the optimal value is known.
 \end{itemize}
The first hypothesis is not satisfied in practice.
However, as the \bb{} tree grows, pseudo-costs become better approximations for the up and down gains.
Likewise, the second hypothesis becomes true at some point during the \bb{} search.
So, while both hypotheses are not initially satisfied, they become more accurate as the tree grows.
As we will see, these hypotheses are not sufficient to render the \gap{} problem theoretically tractable.
 A recursion for \gap{} similar to \eqref{eq:nvar-t} for \nvar{} is
 \begin{equation}
   t(G, m) = \begin{cases}
      1 &\text{ if } G \leq 0\\
      1 + \min\limits_{1\leq i \leq n, m_i > 0}(t(G-l_i, m-v_i) + t(G-r_i,  m-v_i)) &\text{ otherwise}         
         \end{cases}
   \label{eq:gap-t}
 \end{equation}
 where $m$ and $v_i$, are both vectors of size $n$; $m$ is the vector of multiplicities, and $v_i$ is the indicator vector of the set $\{i\}$ (i.e. the $i^\text{th}$ element of $v_i$ is 1, the $n-1$ others are 0).
 This algorithm keeps track of the variables already branched on, and, in the case where all multiplicities are $1$, its running time is $O(2^n G)$.
 
 Figure \ref{fig:treeex-gap} gives the minimum-size \bb{} tree for a \gap{} instance and an \nvar{} instance that has the same variables and gap as input.
 \begin{figure}
 \centering

  \begin{subfigure}{.45\columnwidth}
  \centering
 \begin{tikzpicture}[scale=0.5,level distance=1.5cm,
level 1/.style={sibling distance=4.5cm},
level 2/.style={sibling distance=2.5cm}]
\tikzstyle{every node}=[minimum size=.75cm, circle,draw]
 \node (a){$0$}
  child {node (b1) {$3$}
    child {node (c1) {$6$} }
    child {node (c2) {$6$} }
  }
  child {node (b2) {$3$}
    child {node (c3) {$6$}}
    child {node (c4) {$6$}}
  };
 \end{tikzpicture}
 \caption{Minimum-size \nvar{} tree (size=7).}
 \end{subfigure}
   \begin{subfigure}{.45\columnwidth}
  \centering
 \begin{tikzpicture}[scale=0.5,level distance=1.5cm,
level 1/.style={sibling distance=4.5cm},
level 2/.style={sibling distance=2.5cm}]
\tikzstyle{every node}=[minimum size=.75cm, circle,draw]
 \node (a){$0$}
  child {node (b1) {$3$}
    child {node (c1) {$5$}
      child {node (d1) {$6$} }
      child {node (d2) {$6$} }
    }
    child {node (c2) {$8$} }
  }
  child {node (b2) {$3$}
    child {node (c3) {$5$}
      child {node (d3) {$6$} }
      child {node (d4) {$6$} }
    }
    child {node (c4) {$8$} }
  };
 \end{tikzpicture}
 \caption{Minimum-size \gap{} tree (size=11).}
 \end{subfigure}
 \caption{Minimum-size trees for \nvar{} and \gap{} with input $G=6$ and variables $(1, 1)$, $(2, 5)$, $(3, 3)$ (in \gap{}, with multiplicities 1).}
 \label{fig:treeex-gap}
 \end{figure}
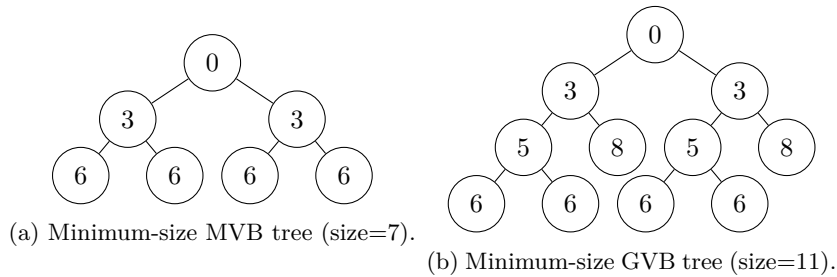

\subsection{Complexity of \gap{}}\label{sec:gap-sharp-p-hard}
For the complexity result presented in this section, it is sufficient to consider the case where all multiplicities are one.
We prove the \#P-hardness of \gap{} by reducing it to a variant of counting knapsack solutions:
 \begin{customproblem} [\countkp{}]\problemnewline{}
\textbf{Input:}  $N$ items with weights $w_1, \dots, w_N$ and $W$ the capacity of the knapsack, a given $K$, all integer positive.\\
\textbf{Question:} Are there at least $K$ distinct feasible solutions $x$ to the (covering) knapsack constraint $\sum_{i=1}^N w_i x_i \geq W$, with $x_i \in \{0,1\}, i=1,\dots, N$?
 \end{customproblem}
 Note that in our definition of the \countkp{} problem, we ask for \emph{at least} $K$ solutions.
This variant of the counting problem can solve the exact counting problem by e.g. dichotomy in $O(N)$ calls, hence it is \#P-hard.
\begin{theorem}
\label{th:sharp-p-hard}
 \gap{} is (weakly) \#P-hard
\end{theorem}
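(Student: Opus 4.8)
The plan is to give a polynomial-time many-one reduction from the threshold counting problem \countkp{} to \gap{}, using only unit multiplicities (so that the path-length restriction forces each item to be branched on at most once per root-to-leaf path). Given a \countkp{} instance with weights $w_1,\dots,w_N$, capacity $W$, and threshold $K$, set $M=\sum_{i=1}^N w_i$ and build $N$ ``item'' variables $x_i=(M,\,M+w_i)$ together with a single ``closer'' variable $x_0=(W,W)$, all with multiplicity one, and put $G=NM+W$. Reading a right branch on $x_i$ as ``item $i$ is selected'' and a left branch as ``item $i$ is rejected,'' a root-to-leaf path that branches on all $N$ item variables with selection set $S$ reaches gap $NM+\sum_{i\in S}w_i$, which meets $G$ exactly when $\sum_{i\in S}w_i\ge W$, i.e. when $S$ is knapsack-feasible.

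The key structural step is to show the minimal \gap{} tree is essentially forced. Because $M=\sum_i w_i$, every node at depth $d\le N-1$ has gap at most $dM+\sum_{i\in\text{used}}w_i<(d+1)M\le NM\le G$, and more generally any closed node must have accumulated $N$ separate ``$M$-contributions,'' which only the $N$ distinct item variables can supply (the closer contributes merely $W<M$ and is available once). Hence no node can close before all $N$ item variables have been branched on; every node above depth $N$ is therefore an inner node, so any valid tree is complete down to depth $N$ and its $2^N$ depth-$N$ nodes are in bijection with the subsets $S\subseteq\{1,\dots,N\}$, independently of the order in which variables are branched on. At such a node, a feasible $S$ is already closed and becomes a leaf, whereas an infeasible $S$ (gap in $[NM,\,G)$) is not closed and must branch once more; the only remaining variable is $x_0$, and since the node's gap is at least $NM$, branching on $x_0$ raises it to at least $NM+W=G$, closing both children. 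I would also argue that deploying $x_0$ earlier is never advantageous, as it cannot close any node above depth $N$ and only duplicates the mandatory item-branching, so the minimizer never does so.

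Collecting these facts, the complete depth-$N$ part contributes $2^{N+1}-1$ nodes, and each of the $2^N-F$ infeasible subsets (where $F$ is the number of feasible subsets) contributes two further leaves, giving a minimum tree size of exactly
\[
 t(G)=2^{N+1}-1+2\bigl(2^N-F\bigr)=2^{N+2}-1-2F .
\]
Thus $F\ge K$ if and only if $t(G)\le 2^{N+2}-1-2K$, so setting $k:=2^{N+2}-1-2K$ makes the \gap{} instance a YES-instance precisely when the \countkp{} instance is, and the reduction is clearly computable in polynomial time. Since all gains, $G$ and $k$ are bounded by values polynomial in $\sum_i w_i$ and $2^N$---hence of polynomial bit-length but exponential magnitude---this establishes only \emph{weak} \#P-hardness, consistent with the $O(2^nG)$ pseudo-polynomial algorithm of \eqref{eq:gap-t}.

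The main obstacle is the forcing lemma of the second paragraph: proving rigorously that \emph{every} minimum-size tree has the claimed canonical shape, i.e. that no cleverer tree can close paths sooner, reorder branchings to save nodes, or exploit the closer variable $x_0$ prematurely. Everything else (the magnitude bookkeeping, the subset bijection, and the final arithmetic) is routine once this structural rigidity is in place. A secondary point to verify carefully is the boundary behaviour---weights equal to $1$, items with $w_i\ge W$, and the all-rejecting subset---to confirm that the bijection and the size formula hold in every case.
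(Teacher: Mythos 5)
Your reduction is correct and is essentially the paper's own proof: the same gadget built from a \countkp{} instance (item variables $(C,\,C+w_i)$ with $C=\sum_i w_i$, one extra unit-multiplicity variable whose only job is to close infeasible leaves one level lower, gap $NC+W$, node budget $2^{N+2}-1-2K$), and the same counting argument that the tree is forced to be complete down to depth $N$, that depth-$N$ nodes correspond bijectively to knapsack subsets, and that each feasible subset saves exactly two nodes. The only differences are cosmetic: your ``closer'' variable has gains $(W,W)$ where the paper uses $(C,C)$ (which lets you drop the paper's assumption $C\geq W$), and your treatment of the structural rigidity --- no node can close before all $N$ item variables are branched on, and deploying the closer early is never advantageous --- is in fact spelled out in more detail than in the paper's own argument.
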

\begin{proof}
We build an instance of the \gap{} problem that embeds an instance of the \countkp{} problem.
Set $n=N+1$, and for each $i\in\{1,\dots,n-1\}$, create a variable with gains $(C, C + w_i)$, where $C=\sum_{j\in\{1,\dots,N\}} w_j$; furthermore, create the $n^{th}$ variable $(C, C)$.
Set the target gap $G = (n-1)C + W$ and the number of allowed nodes $k = 2^{n + 1} -1 -2K$.
Finally, set all multiplicities to one.
Suppose, without loss of generality, that the knapsack instance is feasible, and thus $C\geq W$.
We prove that the answer to this \gap{} instance is YES if and only if the answer to the instance of \countkp{} is YES.

First, observe that a minimum-size tree is obtained by branching on the variable with the biggest right gains, i.e. corresponding to the biggest items (w.r.t. $w_i$), and finally by branching on the dummy variable with gains $(C, C)$.
Consider a tree $T$ that closes the gap $G$.
Observe that each leaf of $T$ has at least depth $n-1$, and at most $n$.
In fact, there is a one-to-one mapping between (not necessarily feasible) solutions of the knapsack instance and nodes at depth $n-1$.
Furthermore, there is a one-to-one mapping between \emph{feasible} solutions of the knapsack instance and \emph{leaves} at depth $n-1$.
Therefore, since an infeasible knapsack instance would yield a perfect tree, with $2^{n + 1} -1$ nodes, each feasible solution decreases the number of nodes by $2$ exactly.
If there are at least $K$ feasible solutions to \countkp{}, there is a tree with at most $2^{n + 1} -1 -2K$ nodes for their corresponding \gap{} instance, and vice-versa.
\end{proof}

In this proof we use the fact that the domination relation, formally defined in Section \ref{sec:scoring}, is a total order on the set of variables, and thus which variables to branch on to obtain a minimum-size tree is trivial.
As a result, we actually prove Theorem \ref{th:sharp-p-hard} for a special case of the \gap{} problem, where at a given level, a single variable is branched on at every node.
Similar to \onevar{}, this special case is therefore a tree measurement problem.

Theorem \ref{th:sharp-p-hard} in particular implies, under the widely believed conjecture that the polynomial hierarchy PH \cite{stockmeyer77a} is proper to the second level, that \gap{} is neither in NP nor in co-NP.
This is because $PH \subseteq P^{\#P}$ \cite{toda91a}.

\section{Using the abstract model for scoring branching candidates}\label{sec:scoring}
\subsection{Applicability of our results to rational numbers}
The results we present in Section \ref{sec:onevar} through \ref{sec:gap} suppose integer gains and gap.
This does not hold in general when solving MIP instances, where numbers are encoded as rationals.
It is however sufficient to notice that in \onevar{}, \nvar{} or \gap{}, if some data is rational, there exists a scaling factor $q \in Q_+$, such that, if the gap and all gains are multiplied by $q$, all data become integer.
Indeed, in the abstract setting described in Section \ref{sec:defs}, which holds for \onevar{}, \nvar{} or \gap{}, suppose a \bb{} tree closes a gap $G$, and all gains are multiplied by $q$, then the same tree closes a gap $q \times G$, since the gap closed at each node is the sum of all gains along the path from the root to the leaf.

Furthermore, given a variable with rational gains $(l, r) \in \Q_+^2$, and a scaling factor $q \in \Q_+$, such that the scaled variable has gains $(q \times l, q \times r) \in Z_+^2$, then the ratio $\varphi$ of the scaled variable can be computed as described in Section \ref{sec:onevar}, and we have
\begin{align*}
 \varphi^{q r} - \varphi^{q (r - l)} - 1 = 0 \Leftrightarrow (\varphi^q)^r - (\varphi^q)^{(r - l)} - 1 = 0 ,
\end{align*}
therefore the ratio of variable $(l, r)$ is given by $\varphi^q$.

Finally, the proof of Theorem \ref{th:cff} (for the closed-form formula) does not require data to be integer, or even rational, and Corollary \ref{cor:fpeq} (for the fixed-point equation) inherently works on rational numbers.
We implement both formulas in our code without any scaling.

\subsection{The \sfratio{} scoring function}\label{sec:sfratio}
Recall from Section \ref{sec:motiv-onevar} that a scoring function combines the left and right LP gains to score candidate variables.
The variable with the highest score is then branched on.
The \sflinear{} and \sfproduct{} functions are two scoring functions that were presented in Section \ref{sec:motiv-onevar}.
In this section, we introduce the \emph{\sfratio{}} scoring function, based on Definition \ref{def:ratio-onevar} for the \onevar{} problem.
Given two variables $i$ and $j$ and their respective ratios $\varphi_i$ and $\varphi_j$, it selects the variable with the smallest ratio.

To help analyze the behavior of the \sfratio{} scoring function, we formalize the concept of domination for two variables.
\begin{definition}
\label{def:variable_domination}
 Given two variables $(l_1, r_1)$ and $(l_2, r_2)$, we say that $(l_1, r_1)$ dominates $(l_2, r_2)$ if and only if $l_1 \geq l_2$ and $r_1 \geq r_2$, with at least one of the inequalities strict.
\end{definition}

\begin{proposition}
Suppose variable $(l_1, r_1)$ dominates variable $(l_2, r_2)$. Then,
\begin{enumerate}
\item Both the \sfproduct{} and the \sflinear{} function assign a better score to $(l_1, r_1)$ than to $(l_2, r_2)$. \label{enum:domin:funcs}
 \item In the \nvar{} or \gap{} problem setting, if both $(l_1, r_1)$ and $(l_2, r_2)$ are branched on on a common path, branching on $(l_1, r_1)$ before branching on $(l_2, r_2)$ yields a tree of size no larger than the converse.\label{enum:domin:ts}
\end{enumerate}
\end{proposition}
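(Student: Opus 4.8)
The plan is to treat the two statements with quite different tools: statement (1) is a direct monotonicity check, while statement (2) rests on a local exchange (``bubble-sort'') argument whose heart is a pruning comparison.

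For statement (1), I would first observe that every variable satisfies $1 \le l_x \le r_x$, so the truncations in the \sfproduct{} function are never active and the score reduces to $l_x r_x$; the \sflinear{} score is $(1-\mu)l_x + \mu r_x$ with non-negative coefficients. Both expressions are non-decreasing in each of $l_x, r_x$, and strictly increasing for the product always, and for the linear function whenever $0<\mu<1$ (as with the default $\mu=\tfrac16$). Since domination gives $l_1 \ge l_2$ and $r_1 \ge r_2$ with at least one inequality strict, each function assigns $(l_1,r_1)$ a score at least as large as $(l_2,r_2)$; the chain $l_1 r_1 \ge l_2 r_1 \ge l_2 r_2$ (all factors positive) makes the product strictly larger, and the same reasoning with positive coefficients handles the linear case. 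This disposes of (1).

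For statement (2), the key is a two-level gadget. Fix a node with residual gap $g$ at which both variables are branched on in the top two levels, with an identical continuation applied to everything below, and compare ``$a=(l_1,r_1)$ first, then $b=(l_2,r_2)$ at both children'' against the reverse. The crucial observation is that, by \eqref{eq:nvar-t} and \eqref{eq:gap-t}, the four grandchildren carry the residual gaps $g-l_1-l_2,\ g-l_1-r_2,\ g-r_1-l_2,\ g-r_1-r_2$ in \emph{both} orderings (and, in the \gap{} setting, the same residual multiplicity vector, since either order decrements $m$ by $v_a+v_b$). Hence, absent any pruning at the intermediate level, the two subtrees are literally identical and equal in size.

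The two orderings therefore differ only through pruning at the child level, i.e.\ when a child's gap is already closed ($\le 0$) and the inner variable need not be branched on there. I would write each tree's size as its ``fully expanded'' value minus the savings from such prunings: the $a$-first tree prunes its left (resp.\ right) child exactly when $g\le l_1$ (resp.\ $g\le r_1$), whereas the $b$-first tree prunes when $g\le l_2$ (resp.\ $g\le r_2$), and each pruning saves the same fixed amount. Because $l_1\ge l_2$ and $r_1\ge r_2$ we have $\mathds{1}[g\le l_2]\le \mathds{1}[g\le l_1]$ and $\mathds{1}[g\le r_2]\le \mathds{1}[g\le r_1]$, so every pruning of the $b$-first tree is also a pruning of the $a$-first tree; the latter thus enjoys at least as much saving and is no larger. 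Finally I would extend the gadget to an arbitrary tree by an exchange argument: while some root-to-leaf path branches on $b$ above $a$, locate such an inversion, apply the local exchange to the corresponding two-level subtree (never increasing the total size), and iterate until the dominating variable precedes the dominated one on every common path. I expect the genuine obstacle to be precisely this global-to-local reduction: the local exchange is naturally symmetric (it reorders at both children at once), whereas along a single path only one branch is pinned down, so care is needed either to restrict to minimum-size trees—where a subtree depends only on its residual state, so both orderings yield identical residual states to continue from—or to phrase the swap on the symmetric gadget. The Part~(1) arithmetic and the pruning bookkeeping are routine; only this composition step is delicate.
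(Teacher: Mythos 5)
Your Part 1 is the same monotonicity check the paper dismisses as ``obvious,'' so there is nothing to compare there. For Part 2 you take a genuinely different route. The paper's entire proof is a global, in-place switch: if $(l_2,r_2)$ is branched above $(l_1,r_1)$ on a path, swap the two variables and observe that the switched tree ``closes a gap at least as large.'' Read literally, that claim is false: take $(l_1,r_1)=(3,100)$, $(l_2,r_2)=(2,5)$, $G=102$, a root branching $(2,5)$, its left child branching $(3,100)$, and that node's right child a leaf with closed gap $l_2+r_1=102$; after the in-place switch this leaf closes only $l_1+r_2=8$, so the switched tree no longer closes $G$. What repairs the argument is precisely your multiset observation: besides switching the variables one must re-pair the two mixed (left--right and right--left) continuations, and that re-pairing is only legitimate when what hangs below a node is determined by its residual state alone --- i.e., on recursion-generated minimum subtrees, or inside your symmetric two-level gadget. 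Your gadget lemma itself is sound, including the bookkeeping that might look glib at first sight: a pruned child's would-be grandchildren always have non-positive residual gap, hence would be single leaves, so every pruning saves exactly two nodes; the fully expanded sizes agree by the multiset identity; and domination ($l_1\geq l_2$, $r_1\geq r_2$) makes the $(l_1,r_1)$-first prunings a superset of the $(l_2,r_2)$-first ones. So your local exchange is a rigorous version of what the paper waves at.

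The step you flag as delicate --- extending the exchange when the two occurrences are not adjacent on the path --- is the one genuine hole in your write-up: your gadget compares only the dominating and dominated variables, so you cannot bubble $(l_1,r_1)$ upward past an intermediate variable it does not dominate, and closing this requires the state-based (minimum-subtree) reading you sketch, plus monotonicity of the minimum tree size in the residual gap (and, for \gap{}, in the multiplicities). But you should know that this is exactly the step the paper's one-line proof skips as well; in fact the paper's shortcut fails even in the adjacent case, as the example above shows. In short: your core lemma is correct and strictly more careful than the paper's argument, and the composition gap you honestly identify is shared by, indeed worse in, the published proof.
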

\begin{proof}
 Part \ref{enum:domin:funcs} is obvious.
 Part \ref{enum:domin:ts} can be proven by observing that if in a \bb{} tree, variable $(l_2, r_2)$ is branched on before $(l_1, r_1)$, then the tree where the variables are switched closes a gap at least as large.
\end{proof}

In other words, the \sflinear{} and the \sfproduct{} scoring functions prefer non-dominated variables, as dominated variables necessarily yield larger trees.
We prove that the \sfratio{} scoring function also exhibits this property:
 \begin{theorem}
 \label{th:domin-ratio}
 Suppose variable $(l_1, r_1)$ dominates variable $(l_2, r_2)$, then $\varphi_1 < \varphi_2$.
 \end{theorem}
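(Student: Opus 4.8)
The plan is to compare $\varphi_1$ and $\varphi_2$ by evaluating one characteristic polynomial at the other variable's ratio, exploiting the monotonicity established in Theorem \ref{th:pol}. Recall from Theorem \ref{th:onevar-cvg} that $\varphi_i$ is the unique root greater than $1$ of $p_i(x) = x^{r_i} - x^{r_i - l_i} - 1$, and from Theorem \ref{th:pol} that each $p_i$ is strictly increasing on $[1, \infty)$ with $\varphi_i$ as its only root there. Consequently, for any $x > 1$ we have $x < \varphi_2$ if and only if $p_2(x) < 0$. Thus it suffices to prove the \emph{single} inequality $p_2(\varphi_1) < 0$, which is the reformulation I would aim at.

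To obtain it, I would isolate the quantity $g(l, r) := \varphi_1^{r} - \varphi_1^{r-l} = \varphi_1^{r-l}(\varphi_1^{l} - 1)$, so that $p_i(\varphi_1) = g(l_i, r_i) - 1$, and show that $g$ is strictly increasing in each of $l$ and $r$ whenever $\varphi_1 > 1$. Increasing $r$ with $l$ fixed multiplies $\varphi_1^{r-l}$ by a factor $\varphi_1 > 1$ while the positive constant $(\varphi_1^{l} - 1)$ is unchanged, so $g$ strictly increases; increasing $l$ with $r$ fixed shrinks the subtracted term $\varphi_1^{r-l}$, so $g = \varphi_1^r - \varphi_1^{r-l}$ again strictly increases. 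Since $(l_1, r_1)$ dominates $(l_2, r_2)$ in the sense of Definition \ref{def:variable_domination}, we have $l_2 \le l_1$ and $r_2 \le r_1$ with at least one inequality strict, which gives $g(l_2, r_2) < g(l_1, r_1)$.

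The final step ties this to the defining equation of $\varphi_1$: because $p_1(\varphi_1) = 0$ we have $g(l_1, r_1) = \varphi_1^{r_1} - \varphi_1^{r_1 - l_1} = 1$. Hence $p_2(\varphi_1) = g(l_2, r_2) - 1 < g(l_1, r_1) - 1 = 0$, and by the equivalence noted above this yields $\varphi_1 < \varphi_2$, as claimed.

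I do not expect a serious obstacle; the only point that requires care is establishing the strict monotonicity of $g$ in both arguments at once. The clean way is to pass from $(l_2, r_2)$ to $(l_1, r_1)$ one coordinate at a time — first $r_2 \to r_1$ with $l_2$ fixed, then $l_2 \to l_1$ with $r_1$ fixed — observing that each step is weakly increasing and that the coordinate whose inequality is strict (at least one exists by domination) forces the corresponding step to be strict. A secondary, essentially trivial subtlety is that the gains are integers; but since the entire argument is algebraic in the fixed quantity $\varphi_1 > 1$, it applies verbatim at the two discrete evaluation points $(l_1, r_1)$ and $(l_2, r_2)$.
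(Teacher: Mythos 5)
Your proof is correct and is essentially the paper's argument in mirror image: the paper evaluates $p_1$ at $\varphi_2$ and shows $p_1(\varphi_2) > 0$ by the same monotonicity of $(l,r) \mapsto x^{r} - x^{r-l} = x^{r}(1 - x^{-l})$ at a fixed $x>1$, whereas you evaluate $p_2$ at $\varphi_1$ and show $p_2(\varphi_1) < 0$, both concluding via Theorem \ref{th:pol}. The coordinate-wise monotonicity argument you spell out is exactly the inequality the paper compresses into its one displayed line, so there is no substantive difference.
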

\begin{proof}
 Starting with the definition of the characteristic polynomial of variable $j$, we establish:
 \begin{align*}
  0 
  &= p_2(\varphi_2)
  = \varphi_2^{r_2} - \varphi_2^{r_2 - l_2} -1
  = \varphi_2^{r_2}(1 - \varphi_2^{-l_2}) -1\\
  &< \varphi_2^{r_1}(1 - \varphi_2^{-l_1}) -1
  = p_1(\varphi_2).
 \end{align*}
Therefore, by Theorem \ref{th:pol}, $\varphi_2 > \varphi_1$.
\end{proof}
As a consequence, the ratio of a dominated variable does not have to be computed.
Moreover, to perform comparisons between ratios, it is not necessary to compute both of them.
\begin{proposition}
 \label{prop:test-ratio}
Suppose that the ratio $\varphi_1$ of variable $(l_1, r_1)$ has been computed, then, given a variable $(l_2, r_2)$, $\varphi_2 < \varphi_1$ if and only if $p_2(\varphi_1) > 0$.
\end{proposition}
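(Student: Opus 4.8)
The plan is to reduce the claimed equivalence to the strict monotonicity of the characteristic polynomial $p_2$ already established in Theorem \ref{th:pol}. The key observation is that a variable's ratio is, by Theorem \ref{th:onevar-cvg}, precisely the unique root greater than $1$ of its characteristic polynomial, and that this polynomial is strictly increasing on $[1,\infty)$; comparing $\varphi_2$ against a fixed value $\varphi_1 > 1$ then amounts to reading off the sign of $p_2$ at that value.

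Concretely, I would first apply Theorem \ref{th:pol} to variable $(l_2, r_2)$ to record that $p_2$ is strictly increasing on $[1,\infty)$ and has a single real root $\varphi_2$ there with $\varphi_2 > 1$; in particular $p_2(x) < 0$ for $1 \leq x < \varphi_2$ and $p_2(x) > 0$ for $x > \varphi_2$. Next I would note that, by the same theorem applied to variable $(l_1, r_1)$, the value $\varphi_1$ is well-defined and satisfies $\varphi_1 > 1$, so $\varphi_1$ lies in the interval on which $p_2$ is strictly increasing. Evaluating $p_2$ at $\varphi_1$ and using strict monotonicity then gives the chain of equivalences $p_2(\varphi_1) > 0 \iff p_2(\varphi_1) > p_2(\varphi_2) \iff \varphi_1 > \varphi_2 \iff \varphi_2 < \varphi_1$, which is exactly the assertion.

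The argument is short, so there is no genuine obstacle; the only point that must be handled with care is \emph{strictness}. I would need to confirm that the monotonicity of $p_2$ stated in Theorem \ref{th:pol} is strict on all of $[1,\infty)$, including the endpoint $x=1$, since otherwise the backward implication from $p_2(\varphi_1) > p_2(\varphi_2)$ to $\varphi_1 > \varphi_2$ could fail. This follows from the derivative computation in the proof of Theorem \ref{th:pol}, where $p_2'(x) > 0$ holds for every $x \geq 1$ because $\sqrt[l_2]{1 - l_2/r_2} < 1$. With strictness in hand the equivalence is immediate, and notably no case analysis on the relative sizes of the gains $(l_1, r_1)$ and $(l_2, r_2)$ is required, which is what makes the test in Proposition \ref{prop:test-ratio} useful in practice: one computes $\varphi_1$ once and then decides each comparison by a single polynomial evaluation.
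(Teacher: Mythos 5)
Your proof is correct and follows essentially the same route as the paper: the paper's entire proof is ``Direct using Theorem \ref{th:pol}'', and your argument simply spells out that intended reasoning, namely that $p_2$ is strictly increasing on $[1,\infty)$ with unique root $\varphi_2>1$, so the sign of $p_2(\varphi_1)$ determines the order of $\varphi_1$ and $\varphi_2$. Your attention to strictness of the monotonicity (via $p_2'(x)>0$ for $x\geq 1$) is exactly the detail the paper leaves implicit.
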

\begin{proof}
 Direct using Theorem \ref{th:pol}.
\end{proof}
Hence a strategy to reduce the number of ratio computations is to compute $\varphi$ for a variable that is believed to be a good candidate for branching (e.g. the best according to the \sfproduct{} function), and use Proposition \ref{prop:test-ratio} to test all other candidates, computing the ratio of a variable only if it is proven to be smaller than the current best one.

The scoring rule that we implement uses the strategy described in Algorithm \ref{alg:sfratio}.
The computation of the ratio at Steps \ref{alg:step:ratio-product} and \ref{alg:step:ratio-update} is done numerically.
We have implemented and tested five different methods, namely the fixed-point iteration method described in Corollary \ref{cor:fpeq}, a simple bisection method, Newton and Laguerre's method \cite[Chapter 9]{press07a}), and a direct method computing \eqref{def:ratio-onevar} using the closed-form formula described in Theorem \ref{th:cff} for a large enough gap.
Iterative methods are initialized with starting point $\sqrt[r]{2}$, which is a proven lower bound (see Proposition \ref{prop:phibounds}).
Experiments have shown that if $\frac{r}{l} \leq 100$, Laguerre's method was the most effective, otherwise the fixed-point iteration method should be used.
Following this rule, computing the ratio of a variable takes around 20 milliseconds of CPU time on a modern computer.
\begin{algorithm}
\caption{Implementation of the \sfratio{} scoring function.}
\label{alg:sfratio}
\begin{algorithmic}[1]
\State Filter out the branching candidates with dominated gains. \Comment Theorem \ref{th:domin-ratio}
\State Compute the ratio $\varphi^*$ of the best variable according to the \sfproduct{} function. \label{alg:step:ratio-product}
\ForAll{remaining branching candidates $i$}
   \If {$p_i(\varphi^*) < 0$}\Comment Proposition \ref{prop:test-ratio}
      \State Compute the ratio $\varphi_i$ and set $\varphi^* = \varphi_i$. \label{alg:step:ratio-update}
   \EndIf
\EndFor
\State \textbf{return} the variable with ratio $\varphi^*$.
\end{algorithmic}
\end{algorithm}

Observe that if the ratio and one of the left (resp. right) gain are fixed, then the right (resp. left) gain can be computed analytically:
\begin{proposition}
 Given a variable $(l, r)$ and its ratio $\varphi$, then
 \begin{align*}
  r = - \frac{\log(1 - \varphi^{-l})}{\log\varphi} && l=- \frac{\log(1 - \varphi^{-r})}{\log\varphi}.
 \end{align*}
\end{proposition}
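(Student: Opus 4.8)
The plan is to start from the defining equation of the ratio rather than from the limit in Definition \ref{def:ratio-onevar}. By Theorem \ref{th:onevar-cvg}, the ratio $\varphi$ is the unique root greater than $1$ of the characteristic polynomial, so it satisfies
\begin{equation*}
 \varphi^r - \varphi^{r-l} - 1 = 0.
\end{equation*}
Both claimed identities should then follow by isolating the desired gain and taking logarithms, so the argument is purely algebraic. The only care needed is to verify that every logarithm is applied to a strictly positive quantity and that $\log\varphi \neq 0$; both hold because $\varphi > 1$ by Theorem \ref{th:pol}.

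First I would derive the formula for $r$. Factoring $\varphi^r$ out of the first two terms of the characteristic equation gives $\varphi^r(1 - \varphi^{-l}) = 1$. Since $\varphi > 1$ and $l \geq 1$, we have $0 < \varphi^{-l} < 1$, so $1 - \varphi^{-l} \in (0,1)$ is strictly positive and I may write $\varphi^r = (1 - \varphi^{-l})^{-1}$. Taking logarithms of both sides and dividing by $\log\varphi > 0$ yields $r = -\log(1 - \varphi^{-l})/\log\varphi$, as claimed.

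For the formula giving $l$, I would instead isolate the middle term: rewriting the equation as $\varphi^{r-l} = \varphi^r - 1$ and dividing through by $\varphi^r$ gives $\varphi^{-l} = 1 - \varphi^{-r}$, whose right-hand side again lies in $(0,1)$. Taking logarithms and dividing by $\log\varphi$ then gives $l = -\log(1 - \varphi^{-r})/\log\varphi$. I do not anticipate any genuine obstacle, since each identity is a one-line consequence of the characteristic equation; the only subtlety worth stating explicitly is the positivity of $1 - \varphi^{-l}$ and $1 - \varphi^{-r}$, which ensures the logarithms are well defined and which follows from the bounds on $\varphi$ established in Proposition \ref{prop:phibounds} and Theorem \ref{th:pol}.
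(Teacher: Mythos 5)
Your proof is correct and takes exactly the route the paper intends: its entire proof reads ``Direct using the definition of the polynomial $p$,'' i.e.\ the same algebraic rearrangement of $\varphi^r - \varphi^{r-l} - 1 = 0$ that you carry out. Your explicit checks that $1-\varphi^{-l}, 1-\varphi^{-r} \in (0,1)$ and $\log\varphi > 0$ (via Theorem \ref{th:pol}) just make the implicit well-definedness argument precise.
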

\begin{proof}
 Direct using the definition of the polynomial $p$.
\end{proof}
Figure \ref{fig:isoscores} has been generated using this property.
The leftmost point (corresponding to variable $(100, 100)$) serves as a reference, and each curve is such that its points have the same score according to one scoring function: the red curve corresponds to the \sflinear{} function with default $\mu$, blue to the \sfproduct{}, and green to the \sfratio{}.
Unsurprisingly, the red curve is a line and the blue curve is a parabola.
Note how the \sfproduct{} function and the \sfratio{} function match very closely when $l$ and $r$ are close to each other.
This essentially means that if all variables have roughly equal left and right gains, the \sfproduct{} and \sfratio{} functions behave similarly.
If this is not the case, then the \sfratio{} function prefers variables with unequal left and right gains compared to the \sfproduct{} function.

\begin{figure}
\centering
  \begin{tikzpicture}[scale=.7]
  \begin{axis}[xlabel={left gain}, x dir = reverse, ylabel={right gain},
legend entries={\sfratio{}, \sflinear{}, \sfproduct{}},
  ]
  \addplot[green] table[y index = 1] {\fixedscoretofive};
  \addplot[red] table[y index = 2] {\fixedscoretofive};
  \addplot[blue] table[y index = 3] {\fixedscoretofive};
  \end{axis}
 
  \end{tikzpicture}
  \caption{Right gains depending on the left gain such that the score is constant.}
  \label{fig:isoscores}
\end{figure}
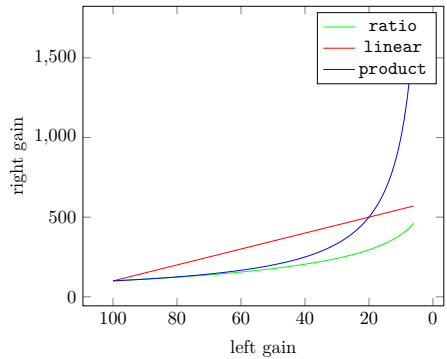

\subsection{The \sfonevar{} scoring function}\label{sec:sfonevar}
Ranking the variables by ratio only makes sense for large enough gaps.
Consider again the example given in Section \ref{sec:motiv-onevar}, and suppose that variables $(10, 10)$ and $(2, 49)$ are the only candidates for branching.
The \sfratio{} function would score variable $(2, 49)$ higher, independent of the gap.
However, in the \onevar{} setting, if the gap is no larger than 40 then $(10, 10)$ is the better variable.
We therefore define a more refined scoring function that takes the gap into account.
Given the gap at a given node, this scoring rule ranks the variables according to their \onevar{} tree-sizes.
We therefore call this scoring function the \sfonevar{} function, for ``Single Variable Tree-Size''.
Algorithm \ref{alg:sfonevar} describes the implementation of the \sfonevar{} function.
We introduce a parameter $D$, which determines whether an approximation of the tree-size should be used, depending on the smallest depth of a leaf in the \onevar{} \bb{} tree, i.e. $\lceil\frac{G}{r_i}\rceil$, given a variable $(l_i, r_i)$ and a gap $G$. 
Parameter $D$ effectively bounds the number of terms to add in the closed-form formula \eqref{eq:onevar-cff} to compute the \onevar{} tree-size.
If $\lceil\frac{G}{r_i}\rceil > D$ (i.e. the gap $G$ is such that the formula \eqref{eq:onevar-cff} would require computing more than $D$ terms), then the exact tree-size is computed for a smaller gap, namely $r_i D$, and an approximation of the tree-size $t(G)$ is computed approximately using the ratio.
In our implementation we have set $D=100$.
Deactivating Step \ref{alg:step:filterdomin} (i.e. not filtering out dominated variables) increases the average running time of a MIP solver implementing Algorithm \ref{alg:sfonevar} as a scoring function in \bb{} by $4\%$ (in the setup of Section \ref{sec:mipexpe-bench}).
Deactivating Step \ref{alg:step:handle-approx} (i.e. not restricting the evaluation of formula \eqref{eq:onevar-cff} to ``small'' trees) increases the average running time by $11\%$.
\begin{algorithm}
\caption{Implementation of the \sfonevar{} scoring function.}
\label{alg:sfonevar}
\begin{algorithmic}[1]
\State Compute the absolute gap $G$ at the current node.
\If {$G = \infty$} \Comment e.g. no primal solution is known.
\State \textbf{return} the variable selected by Algorithm \ref{alg:sfratio}. \Comment Theorem \ref{th:nvar-ratio}
\EndIf
\State Filter out the branching candidates with dominated gains. \label{alg:step:filterdomin}
\ForAll{remaining branching candidates $i$ with gains $(l_i, r_i)$}
   \State Let $d = \lceil \frac{G}{r_i} \rceil$. \Comment The minimum depth of the \onevar{} tree
   \If {$d= \infty$}
      \State Set the tree-size $t_i(G) = \infty$.
   \Else
      \State Let $\tilde{G} = r_i * \min(d, D)$. \label{alg:step:handle-approx} \Comment $D$ is a parameter
      \State Compute the \onevar{} tree-size $t_i(\tilde{G})$ using formula \eqref{eq:onevar-cff}.
      \If {$\tilde{G} < G$}
	  \State Compute the ratio $\varphi_i$.
	  \State Compute the \onevar{} tree-size $t_i(G) \approx \varphi_i^{G - \tilde{G}}t_i(\tilde{G})$. \Comment see \eqref{eq:onevar-ts-approx}
      \EndIf
   \EndIf
\EndFor
\If {all variables $i$ have $t_i(G) = \infty$}
   \State \textbf{return} the variable selected by Algorithm \ref{alg:sfratio}.
\Else
   \State \textbf{return} the variable $i$ with minimum $t_i(G)$.
\EndIf
\end{algorithmic}
\end{algorithm}

Note that for a large enough gap, the \onevar{} function is equivalent to the \sfratio{} function, i.e. the rankings of variables are equal.

We next evaluate the \sfratio{} and \onevar{} scoring function in simulations as well as MIP benchmarks.

\section{Experimental results}\label{sec:expresults}
Numerous MIP solving algorithms are developed and parametrized by a trial-and-error loop of experiments on MIP benchmarks.
The major drawback of this procedure is evidently the computational burden it involves.
Another significant flaw is that the instances used to develop the algorithm are often the ones used to benchmark it.
This may lead to \emph{overfitting}, which means that the algorithm will perform much better on these instances than on general ones.
The magnitude of this problem increases with the number of parameters of a method.
In an effort to mitigate these issues, we carry out simulations on a large set of random instances in Section \ref{sec:simresults}.
The results we obtain on these simulated instances confirm the improvements we achieve on standard MIP benchmarks in Section \ref{sec:mipexpe}.

\subsection{Numerical simulations}\label{sec:simresults}
In order to evaluate the performance of the \sflinear{}, \sfproduct{} and \sfratio{} scoring functions detailed in Section \ref{sec:scoring}, we run simulations in which these scoring functions are used to select variables on \nvar{} and \gap{} instances.
The \gap{} problem models the \bb{} algorithm more closely than the \nvar{} problem, but in practice it is only possible to obtain exact tree-sizes for the latter problem.
We do not run simulations on the \sfonevar{} function, as it requires computing tree-sizes for all variables and each possible gap.

We evaluate the scoring functions on synthetically generated instances.
The variable gains are generated in one of four different ways:
\begin{itemize}
 \item \textbf{Balanced} [B] Both left and right gains are integers uniformly drawn in the interval $[1, 1000]$ (if necessary, gains are switched to ensure that the left gain is less than or equal to the right gain).
 \item \textbf{Unbalanced} [U] The left (resp. right) gain is an integer uniformly drawn in the interval $[1, 500]$ (resp. $[501, 1000]$).
 \item \textbf{Very Unbalanced} [V] The left (resp. right) gain is an integer uniformly drawn in the interval $[1, 250]$ (resp. $[251, 1000]$).
 \item \textbf{Extremely Unbalanced} [X] The left (resp. right) gain is an integer uniformly drawn in the interval $[1, 100]$ (resp. $[101, 1000]$).
\end{itemize}
We will refer to these different ways of generating variables as different \emph{data types}.

\subsubsection{Simulation results for the \nvar{} problem}
Recall that in the \nvar{} problem, one variable can be used an arbitrary number of times.
In this section, two types of methods are evaluated.
The first type correspond to \emph{scoring functions} that can be or are actually implemented in MIP solvers to choose which variable to branch on (e.g. the \sfproduct{} function).
The second type are \emph{problem solving algorithms}, which use knowledge of the \nvar{} problem, and cannot directly be implemented in MIP solvers (e.g. an exact recursive algorithm).
The methods of the first type are the ones we are really interested in, while the methods of the second type help evaluate the first ones.
We compute the tree-sizes that result from the following methods, listed by order of increasing exactness.
\begin{itemize}
 \item \sflinear{} (with different values of the parameter $\mu$), \sfproduct{} and \sfratio{} scoring functions.
 Since the ranking of variables obtained by scoring functions only depends on the left and right gains, the same variable is branched on at every node, which means the tree-size obtained by selecting variables using a scoring function corresponds to the tree-size of the \onevar{} tree for the variable with the best score.
 \item Lower Bound (LB) for scoring functions: it is the minimum tree-size of all \onevar{} instances, for all variables in the input of the \nvar{} instance.
 \item The exact recursive function, as described in Section \ref{sec:nvar}: this method produces a minimum-size tree for the instance.
\end{itemize}
Each branching strategy in this list necessarily produces trees with fewer nodes than those listed higher in the list.
Note that the \sflinear{}, \sfproduct{} and \sfratio{} scoring functions are listed together, as it is \textit{a priori} not clear whether any of the scoring function dominates the others.
These are the scoring functions we are primarily interested in.

The other parameters have been chosen as follows:
\begin{itemize}
 \item The number of variables for each instance is $100$. Increasing the number of variables is possible, however the chances of producing a variable that dominates all others would increase.
 \item The gap is set at $G=10^5$. Increasing $G$ by an order of magnitude may cause the tree-sizes to exceed the maximum number that can be encoded by a double-precision floating-point representation ($\approx 10^{307}$).
 \item The number of instances for each of the B, U, V and X type of variables is 100.
\end{itemize}

Table \ref{tab:simulation-nvar} provides two different performance measures for each type of data and each branching strategy.
The first performance measure, ``t-s'', is the percentile change in geometric means of the tree-sizes compared to the (exact) minimum tree-size.
For instance, the number 26.2 (at the top-left of the table) indicates that this branching strategy yielded trees with, on average, $26.2\%$ more nodes than the minimum-size trees.
For the cases where the difference is in orders of magnitude, only the order is indicated.
Note that the tree-sizes obtained in the experiments vary from around $10^{30}$ for the balanced data to up to $10^{300}$ for extremely unbalanced data.
The second performance measure, ``wins'', is the number of times a scoring function produces the smallest tree-size among the \sflinear{}, \sfproduct{} and \sfratio{} functions (LB is excluded as it necessarily produces smaller tree-sizes).
In case of a tie, multiple scoring functions win.

First, observe that the performance (w.r.t. tree-size) of branching strategies generally decreases as the left and right gains of the variables become less balanced.
For some values of $\mu$, the \sflinear{} function performs extremely badly.
More interestingly, it appears that the best value of $\mu$ heavily depends on the balance of data.
The value $\mu=\frac{1}{2}$ is better for balanced data (B), while for extremely unbalanced data (X), $\mu=\frac{1}{6}$ performs best, in a tie with the \sfratio{} function.
For this type of data, the tie may be explained by Figure \ref{fig:isoscores}, where the \sflinear{} and \sfratio{} functions tend to pick similar variables for very unbalanced gains.
Similarly, the tie between the \sfproduct{} function and the \sfratio{} function for balanced data may be explained by the fact that they pick similar variables for balanced gains.
The \sfratio{} function wins on 397 out of 400 instances, and is very close to the theoretical bound LB that these scoring functions can achieve.
Note that both the \sfratio{} function and LB are within $7\%$ of the actual minimum-tree size, even for the most unbalanced data.

\begin{table}
\centering
\begin{footnotesize}
\begin{tabular}{c|c|ccccc|c|c|c}
 \multirow{2}{*}{Data} & & \multicolumn{5}{c|}{\sflinear{}($\mu$)} & \multirow{2}{*}{\sfproduct{}} & \multirow{2}{*}{\sfratio{}} & \multirow{2}{*}{LB}\\
& $\mu =$ & $0$ & $\frac{1}{6}$ & $\frac{1}{3}$ &  $\frac{1}{2}$ & $1$ & & &  \\
 \hline
 \multirow{2}{*}{B} & t-s & 26.2 & 8.23 & 6.23 & 3.10 & $10^{17}$ & \textbf{2.84} & \textbf{2.84} & 2.83 \\
  & wins & 88 & 93 & 95 & 99 & 8 & \textbf{100} & \textbf{100} &\\
  \hline
 \multirow{2}{*}{U} & t-s& $10^{08}$ & 96.79 & 6.07 & 8.43 & $10^{23}$ & 6.60 & \textbf{3.35} & 3.14 \\
  & wins & 11 & 67 & 93 & 90 & 10 & 93 & \textbf{98} &\\
  \hline
 \multirow{2}{*}{V} & t-s& $10^{20}$ & 57.93 & 24.85 &  282.28 & $10^{23}$ & 29.82 & \textbf{5.75} & 5.75 \\
  & wins & 19 & 77 & 86 & 65 & 14 & 83 & \textbf{99} &\\
  \hline
 \multirow{2}{*}{X} & t-s&  $10^{47}$ & \textbf{6.07} & 731.44 & $10^5$ & $10^{31}$ & 93.86 & \textbf{6.07} & 6.07 \\
  & wins & 7 & \textbf{100} & 72 & 50 & 13 & 82 & \textbf{100} &\\
\end{tabular}
\end{footnotesize}
\caption{Simulation results on \nvar{}.}
\label{tab:simulation-nvar}
\end{table}

Additional results with smaller gaps are given in Appendix \ref{app-sec:additional_simulations}.

\subsubsection{Simulation results for the \gap{} problem}\label{sec:simresults-gap}
The experiments we carry out in this section are significantly different than those we did on \nvar{}: indeed, in the \gap{} problem, each variable can only be used a given number of times on a path from the root to a leaf.
As a consequence, we are not able to compute the exact minimum tree-sizes, as the state space of the recurrence equation \eqref{eq:gap-t} is too large.
For simplicity, and to ensure that the results in this section would be different than those from Section \ref{sec:simresults}, we have set multiplicities to 1.
Note that the algorithms that compute the tree-sizes for scoring functions are also harder to solve than in the previous section, and the gaps $G$ used in our instances are therefore smaller.
The parameters of these experiments are presented below.
\begin{itemize}
 \item As in the previous section, the number of variables for each instance is $100$, and the number of instances for each of the B, U, V and X type of variables is also 100.
 \item The gap takes different values for each type of data. Indeed, while a gap of e.g. $G=5000$ can be closed with a few hundred nodes in the balanced case, it can only be closed in half of the instances for extremely unbalanced data.
 We therefore adapt the gap to the data.
 In each case, the minimum gap tested is chosen such that the tree-sizes are at least 100.
 The maximum gap tested is chosen such that the depth of the trees is close to $100$, i.e. the number of variables, but also such that all 100 instances can be solved.
 The average between the minimum and the maximum gap is also tested.
\end{itemize}
Table \ref{tab:simulation-gap} presents the simulation results, in a format similar to Table \ref{tab:simulation-nvar}.
Since we do not know the minimum tree-sizes in these experiments, the reference is the \sfproduct{} function, as it is the state of the art.
As a consequence, negative changes may (and do) occur.
The observation we made in the previous experiments on \nvar{}, which is that the \sfratio{} function benefits from unbalancedness, clearly carries over.
Furthermore, for a given type of data, it also profits from an increase in the gap.
An approximate $7\%$ decrease in the number of nodes occurs when these two facts are taken together.
Moreover, the \sfratio{} function has, by far, the largest number of wins.
Note that the \sflinear{} function outperforms the \sfratio{} or the \sfproduct{} function in some cases.

\begin{table}
\centering
\begin{footnotesize}
\begin{tabular}{c|c|c|ccccc|c|c}
\multirow{2}{*}{Data}& \multirow{2}{*}{Gap} & & \multicolumn{5}{c|}{\sflinear{}($\mu$)} & \multirow{2}{*}{\sfproduct{}} & \multirow{2}{*}{\sfratio{}}\\
&& $\mu =$ & $0$ & $\frac{1}{6}$ & $\frac{1}{3}$ &  $\frac{1}{2}$ & $1$ & &  \\
\hline
\hline
\multirow{6}{*}{B}&\multirow{2}{*}{5000}& t-s & 1.94 & 0.55 & \textbf{-0.15} & 0.41 & 235.83 & 0.00 & -0.03\\
&& wins & 67 & 77 & \textbf{85} & 76 & 0 & 82 & 83\\
\cline{2-10}
&\multirow{2}{*}{15000}& t-s & 21.34 & 9.37 & 1.18 & 2.36 & 1051.56 & \textbf{0.00} & 0.06\\
&& wins & 2 & 3 & 17 & 14 & 0 & 41 & \textbf{45}\\
\cline{2-10}
&\multirow{2}{*}{25000}& t-s & 57.70 & 19.45 & 0.89 & 19.23 & 1742.11 & 0.00 & \textbf{-0.96}\\
&& wins & 0 & 0 & 7 & 1 & 0 & 28 & \textbf{64}\\
\hline
\hline
\multirow{6}{*}{U}&\multirow{2}{*}{5000}& t-s & 50.86 & 10.63 & \textbf{-0.02} & 4.27 & 314.73 & 0.00 & 0.17\\
&& wins & 0 & 2 & 45 & 3 & 0 & \textbf{48} & 44\\
\cline{2-10}
&\multirow{2}{*}{12500}& t-s & 86.38 & 30.30 & 0.75 & 19.99 & 1562.15 & 0.00 & \textbf{-0.74}\\
&& wins & 0 & 0 & 11 & 0 & 0 & 29 & \textbf{60}\\
\cline{2-10}
&\multirow{2}{*}{20000}& t-s & 71.91 & 22.99 & 0.30 & 61.17 & 3874.17 & 0.00 & \textbf{-2.29}\\
&& wins & 0 & 0 & 2 & 0 & 0 & 14 & \textbf{84}\\
\hline
\hline
\multirow{6}{*}{V} &  \multirow{2}{*}{5000}& t-s & 257.39 & 4.76 & 6.50 & 54.40 & 282.93 & 0.00 & \textbf{-0.91}\\
&& wins & 0 & 1 & 0 & 0 & 0 & 35 & \textbf{65}\\
\cline{2-10}
&  \multirow{2}{*}{7500}& t-s & 428.67 & 11.88 & 14.85 & 102.26 & 425.88 & 0.00 & \textbf{-2.29}\\
&& wins & 0 & 0 & 0 & 0 & 0 & 25 & \textbf{75}\\
\cline{2-10}
&\multirow{2}{*}{10000}& t-s & 528.58 & 24.38 & 25.76 & 135.77 & 525.01 & 0.00 & \textbf{-5.81}\\
&& wins & 0 & 0 & 0 & 0 & 0 & 4 & \textbf{96}\\
\hline
\hline
\multirow{6}{*}{X} & \multirow{2}{*}{2000}& t-s & 445.42 & 8.29 & 42.80 & 63.76 & 89.95 & \textbf{0.00} & 3.15\\
&& wins & 0 & 3 & 0 & 0 & 0 & \textbf{76} & 28\\
\cline{2-10}
& \multirow{2}{*}{3000}& t-s & 903.02 & 6.72 & 47.56 & 73.37 & 107.32 & 0.00 & \textbf{-1.16}\\
&& wins & 0 & 0 & 0 & 0 & 0 & 43 & \textbf{58}\\
\cline{2-10}
& \multirow{2}{*}{4000}& t-s & 1537.76 & 2.47 & 41.26 & 68.27 & 105.88 & 0.00 & \textbf{-6.90}\\
&& wins & 0 & 1 & 0 & 0 & 0 & 2 & \textbf{97}\\
\end{tabular}
\end{footnotesize}
\caption{Simulation results on \gap{}.}
 \label{tab:simulation-gap}
\end{table}

A table similar to Table \ref{tab:simulation-gap} for the \nvar{} problem is given in Appendix \ref{app-sec:additional_simulations}.

Since the \sfproduct{} function seems to perform slightly better for small gaps, and the \sfratio{} improves with larger gaps, the idea of changing the scoring function depending on the gap naturally arises.
Table \ref{tab:simulation-gap-hybrid} provides the results of this hybrid function for the maximum gaps considered for each data type.
The notation is the same as in Table \ref{tab:simulation-gap} (including the fact that the \sfproduct{} function is taken as a reference).
The hybrid \sfratio{}-\sfproduct{} function is parametrized by $h$, the height at which the scoring function used at a node switches from \sfratio{} to \sfproduct{} (the height of a node is the length of the longest path to reach a leaf from this node).
For example, $h=10$ means that at nodes that would have a height of $10$ or less with the \sfratio{} function, the \sfproduct{} function is used instead.
As a consequence, $h=0$ corresponds to the \sfratio{} function, and $h=100$ to the \sfproduct{} function (since there are $100$ variables, the depth of a leaf is at most $100$).

First, observe that we do obtain a slight reduction in the number of nodes for any value of $h$ (except for data U and $h=50$).
Second, note that the optimal value for the height parameter increases as the data becomes less balanced.
Third, observe how for each data type, the relative change compared to the \sfproduct{} function is unimodal, which suggests a low variability of the results and backs up our analysis.

\begin{table}
\centering
\begin{footnotesize}
\begin{tabular}{c|c|ccccccc}
 Data & & (\sfratio{}) & \multicolumn{5}{c}{Hybrid \sfratio{}-\sfproduct{}($h$)} & (\sfproduct{})  \\
(\& Gap) & $h=$ & 0 & 10 & 20 & 30 & 40 & 50 & 100 \\
\hline
B & t-s & -0.96 & \textbf{-1.31} & -0.49 & -0.02 & 0.00 & 0.00 & 0.00\\
(25000)& wins & 6 & \textbf{71} & 28 & 13 & 12 & 12 & 12\\
\hline
U & t-s & -2.29 & -2.29 & \textbf{-3.37} & -1.01 & -0.05 & 0.01 & 0.00\\
(20000) & wins & 0 & 7 & \textbf{79} & 14 & 0 & 0 & 0\\
\hline
V & t-s & -5.81 & -5.81 & -6.18 & \textbf{-8.38} & -4.40 & -0.46 & 0.00\\
(10000) & wins & 0 & 0 & 5 & \textbf{81} & 14 & 0 & 0\\
\hline
X & t-s & -6.90 & -6.95 & -7.38 & -9.03 & \textbf{-10.57} & -8.27 & 0.00\\
(4000) & wins & 0 & 0 & 0 & 14 & \textbf{73} & 13 & 0\\
\end{tabular}
\end{footnotesize}
\caption{Simulation results on \gap{} using a hybrid \sfratio{}-\sfproduct{} function, parametrized by a height $h$.}
 \label{tab:simulation-gap-hybrid}
\end{table}

\subsection{Experiments on MIP instances}\label{sec:mipexpe}
We have modified the solver SCIP 3.1.1 \cite{achterberg09b} to use the \sfratio{} or the \sfonevar{} function, alternatively to the default \sfproduct{} function.
Besides the changes necessary for the implementation of these new scoring functions, no other change to SCIP has been made.
In particular, the decision to use strong branching or pseudo-costs on a given variable and at a given node is unchanged.
Furthermore, note that in SCIP, the selection of the branching variable does not solely rely on the \sfproduct{}.
There are a number of ``soft'' tie-breakers that can come into play if the \sfproduct{} scores of multiple variables are almost equal.
This is referred to as hybrid branching \cite{achterberg09a}.
If at a given node, the variable that the hybrid branching rule selects is not the one that maximizes the \sfproduct{} function, then we keep that variable and do not use the \sfratio{} or \sfonevar{} function.
Finally, for the \sfratio{}, we use the parameter $h=10$ as defined in Section \ref{sec:simresults-gap}, following the results of Table \ref{tab:simulation-gap-hybrid}.
This essentially means that whenever a node is believed to be ``close'' to the leaves, the \sfproduct{} function is used rather than the \sfratio{} function.

\subsubsection{Benchmark instances}\label{sec:mipexpe-bench}
The benchmark test set comprises the instances from MIPLIB 3.0, MIPLIB 2003 \cite{achterberg06a}, and MIPLIB 2010 Benchmark \cite{koch11a}.
These instances are the state of the art in MIP benchmarking.
The instances that are at the time of writing classified as ``open'' have been removed, which leaves 159 instances.
For feasible instances, the optimal value is provided as a primal bound, and primal heuristics are disabled (for all instances).
Cuts after the root node are also disabled to reduce performance variability \cite{lodi13a}.
The disconnected components presolver is disabled as it can lead to different transformed problems, and thus possibly increased variability.
Furthermore, for each instance, ten different seeds are used to create random permutations of the input (using SCIP's internal procedure).
A time limit of two hours is specified.
In this setting, the testbed comprises 1590 instances, and the total running time that this experiment required on a single machine is 4 months.

Three different scoring functions are tested.
The first one is the \sfproduct{} function, which is the default in SCIP.
The second one is the \sfratio{} function, defined in Section \ref{sec:sfratio}.
The third one is the \sfonevar{} function defined in Section \ref{sec:sfonevar}.

Table \ref{tab:mipexpe-bench} gives the result for these three scoring functions, with \sfproduct{} as the reference.
Each line corresponds to the 10 permutations of each instance of the test set.
There are three columns for each scoring function.
The columns of the \sfproduct{} scoring function gives absolute performance measures, while the two others give measures relative to \sfproduct{}.
For \sfproduct{}, the first column provides the number of permutations solved, and, for the other two functions, the difference from the reference.
Note that all 10 permutations of a given instance may not be solved the same number of times depending on the scoring function used.
For each instance, we determine for each scoring function the number of permutations solved.
Let $N$ be the minimum and note that $N$ can be as large as 6 and we could still have no single permutation solved by all three scoring functions.
After having determined $N$ for a given instance, we take the $N$ best results for each scoring function and compute the arithmetic averages.
These averages of time (in seconds) and nodes are directly displayed for the \sfproduct{} function.
In the nodes column, we use the letters \emph{k} and \emph{m} as a shorthand for thousands and millions.
For the \sfratio{} and \sfonevar{} functions, we display the ratio of averages for both time and nodes, compared to the \sfproduct{}.
Considering the $N$ best permutations rather than the intersection of the sets of solved permutations enables more data to be used, and reflects in the averages the fact that some scoring functions solve more permutations, giving them a fair advantage.

The instances for which at least one permutation can be solved at the root node by any setting, or for which no permutation is solved by all settings (i.e. $N=0$), are not displayed.
We similarly exclude instances that can be solved in less than a second by at least one setting and permutation.
Note that these instances are accounted in the total number of instances solved, but not in the different average measures.

The total, geometric mean and shifted geometric mean (with shifts 10 and 100 for time and nodes, respectively) \cite[p. 321]{achterberg07a}  are provided at the bottom of the table.
These are not computed on the averages given at each line, but on each value that is used to compute the line averages (i.e. an instance solved 10 times by all settings counts more than an instance solved fewer than 10 times by all settings).

Function \sfratio{} solves marginally more instances than the \sfproduct{} function.
Both functions \sfratio{} and \sfonevar{} slightly outperform \sfproduct{}, in terms of time and nodes used.
The difference in performance is especially apparent when considering the total resources used, as it mostly reflects the performance on large instances, on which the scoring functions we introduce perform better.
In Section \ref{sec:mipexpe-tree}, we will consider a set of instances that contains instances that experimentally require large \bb{} trees, and on which this observation is comfirmed.

\begin{scriptsize}
\begin{center}
{\setlength{\tabcolsep}{4pt}
\begin{longtable}{c|ccc|ccc|ccc}
\caption{Comparison of scoring function on the benchmark test set (continues in the next page).}
\endfoot
\caption{Comparison of scoring function on the benchmark test set.}
\endlastfoot

     \multirow{2}{*}{Instance} & \multicolumn{3}{|c|}{product} & \multicolumn{3}{|c|}{ratio} & \multicolumn{3}{|c}{svts}\\
 & \# & time (s) & nodes & \# & time (s) & nodes & \# & time (s) & nodes\\
\hline
\hline
\endhead
30n20b8 & \textbf{10} & \textbf{521} & \textbf{10} & \textbf{+0} & {1.02} & {1.49} & \textbf{+0} & {1.01} & {1.34}\\
aflow30a & \textbf{10} & {36} & {2.5k} & \textbf{+0} & {0.98} & {0.99} & \textbf{+0} & \textbf{0.96} & \textbf{0.89}\\
aflow40b & \textbf{10} & {3483} & {210.9k} & \textbf{+0} & \textbf{0.68} & {0.65} & \textbf{+0} & {0.70} & \textbf{0.63}\\
air04 & \textbf{10} & {30} & \textbf{7} & \textbf{+0} & \textbf{0.96} & \textbf{1.00} & \textbf{+0} & {1.03} & \textbf{1.00}\\
app1-2 & \textbf{10} & {845} & \textbf{565} & \textbf{+0} & {0.90} & {1.01} & \textbf{+0} & \textbf{0.89} & {1.14}\\
arki001 & {2} & {4862} & {1.3m} & {+0} & \textbf{0.73} & \textbf{0.74} & \textbf{+1} & {0.85} & {0.80}\\
ash608gpia-3col & \textbf{10} & \textbf{87} & \textbf{5} & \textbf{+0} & {1.04} & \textbf{1.00} & \textbf{+0} & {1.01} & \textbf{1.00}\\
bell5 & \textbf{10} & {1} & \textbf{1.1k} & \textbf{+0} & \textbf{0.99} & \textbf{1.00} & \textbf{+0} & {0.99} & \textbf{1.00}\\
biella1 & \textbf{10} & {217} & \textbf{2.8k} & \textbf{+0} & \textbf{0.98} & {1.06} & \textbf{+0} & {0.99} & {1.00}\\
bienst2 & \textbf{10} & \textbf{288} & {112.6k} & \textbf{+0} & {1.05} & \textbf{0.98} & \textbf{+0} & {1.11} & {1.09}\\
binkar10\_1 & \textbf{10} & \textbf{431} & {135.8k} & \textbf{+0} & {1.09} & {1.03} & \textbf{+0} & {1.01} & \textbf{0.98}\\
blend2 & \textbf{10} & {2} & {222} & \textbf{+0} & \textbf{0.99} & \textbf{1.00} & \textbf{+0} & {1.01} & {1.01}\\
cap6000 & \textbf{10} & \textbf{8} & \textbf{1.8k} & \textbf{+0} & {1.00} & {1.04} & \textbf{+0} & {1.01} & {1.05}\\
csched010 & \textbf{9} & {6035} & {721.2k} & {-1} & \textbf{0.86} & {0.86} & \textbf{+0} & {0.90} & \textbf{0.84}\\
danoint & \textbf{2} & \textbf{7047} & {1.2m} & {-1} & {1.00} & \textbf{0.96} & {-1} & {1.02} & {0.97}\\
dcmulti & \textbf{10} & {2} & \textbf{9} & \textbf{+0} & \textbf{1.00} & \textbf{1.00} & \textbf{+0} & {1.00} & \textbf{1.00}\\
dfn-gwin-UUM & \textbf{10} & \textbf{256} & {62.9k} & \textbf{+0} & {1.03} & {1.07} & \textbf{+0} & {1.05} & \textbf{0.96}\\
eil33-2 & \textbf{10} & {103} & {496} & \textbf{+0} & \textbf{0.97} & {0.99} & \textbf{+0} & {1.11} & \textbf{0.97}\\
eilB101 & \textbf{10} & {134} & {4.4k} & \textbf{+0} & \textbf{0.82} & {0.84} & \textbf{+0} & {0.88} & \textbf{0.76}\\
enlight13 & {0} & {-} & {-} & \textbf{+10} & {-} & {-} & {+0} & {-} & {-}\\
fast0507 & \textbf{10} & {129} & \textbf{494} & \textbf{+0} & \textbf{0.98} & {1.18} & \textbf{+0} & {0.99} & {1.22}\\
fiber & \textbf{10} & \textbf{5} & \textbf{12} & \textbf{+0} & {1.00} & \textbf{1.00} & \textbf{+0} & {1.01} & {1.10}\\
fixnet6 & \textbf{10} & {5} & \textbf{9} & \textbf{+0} & \textbf{1.00} & \textbf{1.00} & \textbf{+0} & {1.01} & {1.02}\\
gesa2-o & \textbf{10} & \textbf{4} & \textbf{4} & \textbf{+0} & {1.01} & \textbf{1.00} & \textbf{+0} & {1.01} & \textbf{1.00}\\
gesa2\_o & \textbf{10} & {4} & \textbf{5} & \textbf{+0} & \textbf{0.99} & \textbf{1.00} & \textbf{+0} & {1.00} & \textbf{1.00}\\
gesa3 & \textbf{10} & {4} & {20} & \textbf{+0} & {0.99} & {1.00} & \textbf{+0} & \textbf{0.99} & \textbf{0.97}\\
gesa3\_o & \textbf{10} & {4} & \textbf{9} & \textbf{+0} & \textbf{0.99} & \textbf{1.00} & \textbf{+0} & {1.00} & \textbf{1.00}\\
glass4 & \textbf{10} & \textbf{29} & \textbf{11.5k} & \textbf{+0} & {1.26} & {1.34} & \textbf{+0} & {1.05} & {1.03}\\
iis-100-0-cov & \textbf{10} & {2286} & {83.6k} & \textbf{+0} & \textbf{0.97} & \textbf{0.99} & \textbf{+0} & {0.97} & {0.99}\\
iis-pima-cov & \textbf{10} & {806} & {6.7k} & \textbf{+0} & {1.00} & \textbf{0.96} & \textbf{+0} & \textbf{1.00} & {0.96}\\
khb05250 & \textbf{10} & {1} & \textbf{7} & \textbf{+0} & {0.99} & \textbf{1.00} & \textbf{+0} & \textbf{0.98} & \textbf{1.00}\\
l152lav & \textbf{10} & \textbf{2} & \textbf{16} & \textbf{+0} & {1.01} & {1.05} & \textbf{+0} & {1.01} & \textbf{1.00}\\
lseu & \textbf{10} & {2} & {1.0k} & \textbf{+0} & \textbf{0.92} & \textbf{0.83} & \textbf{+0} & {1.09} & {1.11}\\
map18 & \textbf{10} & \textbf{462} & {260} & \textbf{+0} & {1.03} & \textbf{0.99} & \textbf{+0} & {1.05} & {1.00}\\
map20 & \textbf{10} & \textbf{422} & {300} & \textbf{+0} & {1.06} & {1.00} & \textbf{+0} & {1.00} & \textbf{0.98}\\
mas74 & \textbf{10} & {1810} & {3.3m} & \textbf{+0} & \textbf{0.87} & \textbf{0.88} & \textbf{+0} & {0.92} & {0.90}\\
mas76 & \textbf{10} & {200} & {484.2k} & \textbf{+0} & \textbf{0.83} & \textbf{0.83} & \textbf{+0} & {0.98} & {0.98}\\
mcsched & \textbf{10} & {268} & {18.1k} & \textbf{+0} & {0.99} & {1.01} & \textbf{+0} & \textbf{0.98} & \textbf{0.95}\\
mik-250-1-100-1 & \textbf{10} & \textbf{1832} & \textbf{829.9k} & \textbf{+0} & {1.18} & {1.17} & \textbf{+0} & {1.30} & {1.31}\\
mine-166-5 & \textbf{10} & \textbf{30} & {356} & \textbf{+0} & {1.01} & \textbf{0.95} & \textbf{+0} & {1.05} & {1.13}\\
mine-90-10 & \textbf{10} & {1228} & {116.9k} & \textbf{+0} & \textbf{0.92} & \textbf{0.96} & \textbf{+0} & {1.13} & {1.18}\\
misc03 & \textbf{10} & {2} & {125} & \textbf{+0} & \textbf{0.98} & {1.02} & \textbf{+0} & {0.98} & \textbf{0.97}\\
misc06 & \textbf{10} & {1} & \textbf{7} & \textbf{+0} & \textbf{0.99} & \textbf{1.00} & \textbf{+0} & {1.01} & \textbf{1.00}\\
misc07 & \textbf{10} & \textbf{45} & {23.8k} & \textbf{+0} & {1.24} & {1.28} & \textbf{+0} & {1.06} & \textbf{0.99}\\
mod008 & \textbf{10} & {5} & \textbf{12} & \textbf{+0} & \textbf{0.99} & {1.07} & \textbf{+0} & {0.99} & \textbf{1.00}\\
mod011 & \textbf{10} & {74} & {886} & \textbf{+0} & \textbf{0.97} & \textbf{0.89} & \textbf{+0} & {0.98} & {0.96}\\
modglob & \textbf{10} & {1} & \textbf{20} & \textbf{+0} & \textbf{1.00} & {1.04} & \textbf{+0} & {1.00} & {1.01}\\
momentum2 & {1} & \textbf{4417} & \textbf{13.6k} & \textbf{+1} & {1.30} & {1.33} & \textbf{+1} & {1.15} & {1.13}\\
msc98-ip & \textbf{10} & {1959} & {6.3k} & \textbf{+0} & {0.90} & {1.00} & \textbf{+0} & \textbf{0.77} & \textbf{0.75}\\
mzzv11 & \textbf{10} & {732} & {233} & \textbf{+0} & \textbf{0.99} & \textbf{0.89} & \textbf{+0} & {1.00} & {0.91}\\
mzzv42z & \textbf{10} & {554} & \textbf{11} & \textbf{+0} & {1.01} & {1.04} & \textbf{+0} & \textbf{0.97} & {1.04}\\
n4-3 & \textbf{10} & \textbf{1213} & {46.6k} & \textbf{+0} & {1.04} & {0.99} & \textbf{+0} & {1.03} & \textbf{0.95}\\
neos-1109824 & \textbf{10} & {395} & {23.2k} & \textbf{+0} & \textbf{0.52} & \textbf{0.37} & \textbf{+0} & {0.93} & {0.85}\\
neos-1396125 & \textbf{10} & {486} & {69.9k} & \textbf{+0} & {1.00} & {1.01} & \textbf{+0} & \textbf{0.81} & \textbf{0.81}\\
neos-476283 & \textbf{10} & {431} & {131} & \textbf{+0} & \textbf{0.88} & {0.99} & \textbf{+0} & {1.00} & \textbf{0.94}\\
neos-686190 & \textbf{10} & \textbf{142} & \textbf{1.8k} & \textbf{+0} & {1.13} & {1.22} & \textbf{+0} & {1.13} & {1.18}\\
neos13 & \textbf{10} & {275} & {12} & \textbf{+0} & \textbf{0.95} & \textbf{0.98} & \textbf{+0} & {1.00} & \textbf{0.98}\\
neos18 & \textbf{10} & \textbf{76} & \textbf{5.3k} & \textbf{+0} & {1.27} & {1.59} & \textbf{+0} & {1.12} & {1.16}\\
net12 & \textbf{10} & \textbf{2011} & \textbf{2.9k} & \textbf{+0} & {1.03} & {1.07} & \textbf{+0} & {1.08} & {1.08}\\
netdiversion & \textbf{10} & {1463} & {13} & \textbf{+0} & {0.97} & \textbf{0.92} & \textbf{+0} & \textbf{0.95} & {1.04}\\
newdano & \textbf{5} & {5636} & {1.3m} & {-1} & {1.02} & \textbf{0.96} & \textbf{+0} & \textbf{0.95} & {0.97}\\
noswot & \textbf{10} & \textbf{573} & \textbf{679.8k} & \textbf{+0} & {1.54} & {1.57} & \textbf{+0} & {1.43} & {1.36}\\
ns1208400 & \textbf{10} & \textbf{414} & \textbf{174} & \textbf{+0} & {1.10} & {1.20} & \textbf{+0} & {1.13} & {1.33}\\
ns1688347 & \textbf{10} & \textbf{81} & {92} & \textbf{+0} & {1.02} & {1.14} & \textbf{+0} & {1.05} & \textbf{0.92}\\
ns1766074 & \textbf{10} & {5740} & {925.7k} & \textbf{+0} & \textbf{0.98} & \textbf{0.98} & \textbf{+0} & {0.99} & \textbf{0.98}\\
ns1830653 & \textbf{10} & {492} & {18.6k} & \textbf{+0} & {0.98} & {1.03} & \textbf{+0} & \textbf{0.91} & \textbf{0.92}\\
nsrand-ipx & {3} & {5004} & {584.7k} & \textbf{+7} & \textbf{0.33} & \textbf{0.30} & {+2} & {0.50} & {0.49}\\
nw04 & \textbf{10} & {2719} & \textbf{10} & \textbf{+0} & \textbf{0.84} & \textbf{1.00} & \textbf{+0} & {0.93} & \textbf{1.00}\\
opm2-z7-s2 & \textbf{10} & {858} & {977} & \textbf{+0} & {1.03} & {1.00} & \textbf{+0} & \textbf{0.92} & \textbf{0.79}\\
p0201 & \textbf{10} & {3} & \textbf{10} & \textbf{+0} & \textbf{0.99} & {1.02} & \textbf{+0} & {1.00} & {1.02}\\
p2756 & \textbf{10} & {4} & \textbf{10} & \textbf{+0} & \textbf{1.00} & \textbf{1.00} & \textbf{+0} & {1.01} & \textbf{1.00}\\
pg5\_34 & \textbf{10} & {1397} & {116.0k} & \textbf{+0} & \textbf{0.93} & \textbf{0.93} & \textbf{+0} & {0.96} & {0.94}\\
pk1 & \textbf{10} & \textbf{123} & \textbf{325.8k} & \textbf{+0} & {1.13} & {1.14} & \textbf{+0} & {1.09} & {1.06}\\
pp08a & \textbf{10} & {2} & \textbf{237} & \textbf{+0} & {1.00} & {1.01} & \textbf{+0} & \textbf{0.98} & {1.01}\\
pp08aCUTS & \textbf{10} & {2} & \textbf{153} & \textbf{+0} & \textbf{0.99} & {1.02} & \textbf{+0} & {0.99} & {1.05}\\
pw-myciel4 & {0} & {-} & {-} & \textbf{+7} & {-} & {-} & {+4} & {-} & {-}\\
qiu & \textbf{10} & {106} & {12.5k} & \textbf{+0} & {1.00} & {1.02} & \textbf{+0} & \textbf{0.98} & \textbf{0.98}\\
qnet1 & \textbf{10} & {11} & \textbf{4} & \textbf{+0} & \textbf{0.98} & \textbf{1.00} & \textbf{+0} & {1.00} & \textbf{1.00}\\
qnet1\_o & \textbf{10} & {6} & \textbf{4} & \textbf{+0} & \textbf{0.97} & \textbf{1.00} & \textbf{+0} & {0.98} & \textbf{1.00}\\
rail507 & \textbf{10} & \textbf{130} & \textbf{507} & \textbf{+0} & {1.07} & {1.17} & \textbf{+0} & {1.08} & {1.19}\\
ran16x16 & \textbf{10} & {544} & {349.3k} & \textbf{+0} & {0.82} & {0.82} & \textbf{+0} & \textbf{0.81} & \textbf{0.78}\\
rd-rplusc-21 & {0} & {-} & {-} & \textbf{+1} & {-} & {-} & {+0} & {-} & {-}\\
reblock67 & \textbf{10} & {250} & {47.9k} & \textbf{+0} & {0.98} & {1.02} & \textbf{+0} & \textbf{0.93} & \textbf{0.94}\\
rmatr100-p10 & \textbf{10} & \textbf{109} & {799} & \textbf{+0} & {1.03} & {1.00} & \textbf{+0} & {1.02} & \textbf{0.93}\\
rmatr100-p5 & \textbf{10} & {140} & {387} & \textbf{+0} & {0.96} & {0.99} & \textbf{+0} & \textbf{0.95} & \textbf{0.99}\\
rmine6 & {3} & \textbf{3540} & \textbf{380.6k} & \textbf{+1} & {1.10} & {1.05} & {-2} & {2.00} & {2.39}\\
rocII-4-11 & \textbf{10} & \textbf{419} & \textbf{4.7k} & \textbf{+0} & {2.43} & {5.91} & \textbf{+0} & {1.27} & {1.80}\\
rococoC10-001000 & {1} & {4677} & {323.9k} & \textbf{+1} & {0.83} & {0.81} & \textbf{+1} & \textbf{0.44} & \textbf{0.50}\\
roll3000 & \textbf{9} & {3781} & {429.3k} & {-2} & {1.15} & {1.18} & {-3} & \textbf{0.84} & \textbf{0.83}\\
rout & \textbf{10} & {111} & {44.6k} & \textbf{+0} & \textbf{0.73} & \textbf{0.73} & \textbf{+0} & {0.94} & {0.94}\\
satellites1-25 & \textbf{10} & \textbf{1590} & {3.4k} & \textbf{+0} & {1.03} & \textbf{0.98} & \textbf{+0} & {1.02} & {1.01}\\
set1ch & \textbf{10} & \textbf{2} & {5} & \textbf{+0} & {1.00} & \textbf{0.92} & \textbf{+0} & {1.00} & \textbf{0.92}\\
sp98ic & \textbf{10} & {4736} & {276.2k} & \textbf{+0} & \textbf{0.49} & \textbf{0.42} & \textbf{+0} & {0.70} & {0.58}\\
sp98ir & \textbf{10} & {71} & \textbf{1.6k} & \textbf{+0} & \textbf{0.97} & {1.16} & \textbf{+0} & {0.98} & {1.15}\\
stein27 & \textbf{10} & {2} & {4.2k} & \textbf{+0} & \textbf{0.98} & \textbf{0.98} & \textbf{+0} & {1.00} & {0.99}\\
stein45 & \textbf{10} & {34} & {51.5k} & \textbf{+0} & \textbf{0.92} & \textbf{0.98} & \textbf{+0} & {0.97} & {0.99}\\
tanglegram1 & \textbf{7} & {1749} & {289} & {-1} & {1.13} & {1.06} & {-2} & \textbf{0.90} & \textbf{0.58}\\
tanglegram2 & \textbf{10} & {10} & \textbf{4} & \textbf{+0} & \textbf{0.98} & \textbf{1.00} & \textbf{+0} & {1.00} & \textbf{1.00}\\
timtab1 & \textbf{10} & \textbf{1184} & \textbf{774.5k} & \textbf{+0} & {1.04} & {1.04} & \textbf{+0} & {1.01} & {1.02}\\
tr12-30 & \textbf{10} & {3644} & \textbf{780.2k} & \textbf{+0} & {1.02} & {1.08} & \textbf{+0} & \textbf{1.00} & {1.02}\\
unitcal\_7 & {9} & {3415} & {32.0k} & \textbf{+1} & \textbf{0.76} & \textbf{0.75} & {+0} & {0.91} & {0.87}\\
vpm2 & \textbf{10} & {2} & \textbf{180} & \textbf{+0} & {1.00} & {1.05} & \textbf{+0} & \textbf{0.98} & {1.01}\\
zib54-UUE & \textbf{3} & \textbf{4914} & \textbf{413.0k} & \textbf{+0} & {1.20} & {1.18} & \textbf{+0} & {1.09} & {1.13}\\
\hline
\hline
Total & {1225} & {705680} & {121.4m} & \textbf{+24} & \textbf{0.92} & \textbf{0.97} & {+1} & {0.94} & {0.97}\\
\hline
Geo. mean & & {98} & {1.2k} & & \textbf{0.98} & {1.00} & & {0.99} & \textbf{0.98}\\
\hline
Sh. geo. mean & & {148} & {2.8k} & & \textbf{0.98} & {1.00} & & {0.99} & \textbf{0.98}\\
\hline

\label{tab:mipexpe-bench}
\end{longtable}
}
\end{center}
\end{scriptsize}

\subsubsection{Instances with large \bb{} trees}\label{sec:mipexpe-tree}
MIPLIB 2010 also has a so-called tree set with 52 instances, which ``contains instances that (empirically) lead to large enumeration trees'' \cite{koch11a}.
These instances have been selected because the scoring functions \sfratio{} and \sfonevar{} defined in Section \ref{sec:scoring} have specifically been designed to solve instances with large \bb{} trees.
We have tested our scoring functions on these instances in the same setup as the experiments of Section \ref{sec:mipexpe-bench}, but we gave a time limit of 12 hours to allow for a more significant number of instances to be solved.
Including permutations, the testbed comprises 520 instances, and required the equivalent of 19 months of running time on a single machine.

Table \ref{tab:mipexpe-tree} provides the result for the tree test set with the same notation as Table \ref{tab:mipexpe-bench} in Section \ref{sec:mipexpe-bench}.
Both functions \sfratio{} and \sfonevar{} largely outperform \sfproduct{}, both in terms of time and nodes, and solve slightly more instances.
Note that some instances in this test set do not require larger trees than those in the benchmark test set (e.g. glass4, ns894788 and pg).
This is probably due to the fact that MIP solvers have made significant progress for these instances since this test set was designed.
\announceOS{}

\begin{scriptsize}
\begin{center}
{\setlength{\tabcolsep}{4pt}
\begin{longtable}{c|ccc|ccc|ccc}
\caption{Comparison of scoring function on the tree test set (continues in the next page).}
\endfoot
\caption{Comparison of scoring function on the tree test set.}
\endlastfoot

     \multirow{2}{*}{Instance} & \multicolumn{3}{|c|}{product} & \multicolumn{3}{|c|}{ratio} & \multicolumn{3}{|c}{svts}\\
 & \# & time (s) & nodes & \# & time (s) & nodes & \# & time (s) & nodes\\
\hline
\hline
\endhead
csched007 & \textbf{10} & {31000} & {5.7m} & \textbf{+0} & \textbf{0.78} & \textbf{0.75} & \textbf{+0} & {0.82} & {0.76}\\
csched008 & \textbf{10} & \textbf{1440} & \textbf{112.4k} & \textbf{+0} & {1.21} & {1.03} & \textbf{+0} & {1.14} & {1.06}\\
glass4 & \textbf{10} & \textbf{28} & \textbf{11.5k} & \textbf{+0} & {1.37} & {1.34} & \textbf{+0} & {1.03} & {1.03}\\
gmu-35-40 & {0} & {-} & {-} & \textbf{+2} & {-} & {-} & {+1} & {-} & {-}\\
k16x240 & \textbf{10} & {23835} & {23.2m} & \textbf{+0} & {0.88} & {0.89} & \textbf{+0} & \textbf{0.86} & \textbf{0.85}\\
neos-1616732 & \textbf{10} & {9182} & {2.5m} & \textbf{+0} & {0.85} & {0.94} & \textbf{+0} & \textbf{0.82} & \textbf{0.84}\\
neos-942830 & \textbf{10} & {9184} & {1.5m} & \textbf{+0} & \textbf{0.95} & {0.95} & \textbf{+0} & {1.00} & \textbf{0.94}\\
neos15 & \textbf{1} & {34352} & {12.3m} & \textbf{+0} & {0.91} & {0.94} & \textbf{+0} & \textbf{0.87} & \textbf{0.91}\\
neos858960 & \textbf{10} & {3997} & {2.8m} & \textbf{+0} & {0.92} & \textbf{0.93} & \textbf{+0} & \textbf{0.92} & \textbf{0.93}\\
noswot & \textbf{10} & \textbf{556} & \textbf{679.8k} & \textbf{+0} & {1.62} & {1.57} & \textbf{+0} & {1.44} & {1.36}\\
ns1766074 & \textbf{10} & {5747} & {925.7k} & \textbf{+0} & \textbf{0.95} & \textbf{0.98} & \textbf{+0} & {0.98} & \textbf{0.98}\\
pg & \textbf{10} & \textbf{24} & {203} & \textbf{+0} & {1.01} & \textbf{0.97} & \textbf{+0} & {1.01} & {1.10}\\
ran14x18 & \textbf{10} & {23039} & {18.4m} & \textbf{+0} & \textbf{0.83} & {0.82} & \textbf{+0} & {0.87} & \textbf{0.81}\\
reblock166 & {4} & {16858} & {1.6m} & \textbf{+2} & {0.98} & {1.00} & \textbf{+2} & \textbf{0.79} & \textbf{0.95}\\
timtab1 & \textbf{10} & \textbf{1142} & \textbf{774.5k} & \textbf{+0} & {1.05} & {1.04} & \textbf{+0} & {1.02} & {1.02}\\
umts & \textbf{10} & {8726} & {1.5m} & \textbf{+0} & {0.69} & {0.71} & \textbf{+0} & \textbf{0.64} & \textbf{0.65}\\
wachplan & \textbf{10} & \textbf{15334} & \textbf{324.2k} & \textbf{+0} & {1.02} & {1.01} & \textbf{+0} & {1.01} & {1.01}\\
\hline
\hline
Total & {155} & {1434131} & {602.1m} & \textbf{+4} & {0.87} & {0.87} & {+3} & \textbf{0.87} & \textbf{0.84}\\
\hline
Geo. mean & & {2879} & {618.6k} & & {0.97} & {0.96} & & \textbf{0.95} & \textbf{0.95}\\
\hline
Sh. geo. mean & & {3021} & {636.4k} & & {0.97} & {0.96} & & \textbf{0.95} & \textbf{0.95}\\
\hline

\label{tab:mipexpe-tree}
\end{longtable}
}
\end{center}

\end{scriptsize}

\section{Conclusions}\label{sec:conclusion}
We developed one of the first models for the branching component of the \BB{} algorithm, over fifty years after its introduction.
We proved that these models are relevant by theoretically establishing new scoring functions that are efficient for MIP solving.
Numerous questions naturally arise regarding these models.

One possible line of investigation relates to the computational complexity of the decision problems we have defined.
For instance, can  \nvar{} be solved in polynomial time (even for two variables)?
Is there an approximation algorithm for the minimization version of \gap{}?

Scoring functions are the only components of \bb{} that we numerically analyze through the theory we develop in this paper.
Are there other elements of \bb{} that can be studied via the current models?
There certainly exist decision problems that can model the \bb{} algorithm more accurately than \gap{} (e.g. if the gains of a variable are not fixed).
Would the analysis of these models deepen our understanding of \bb{}, and lead to additional MIP solving improvements?

\begin{acknowledgements}
The authors would like to thank Eduardo Uchoa for pointing out reference \cite{kullmann09a} and Graham Farr for helping with the discussion at the end of Section \ref{sec:gap-sharp-p-hard}.\thankreviewers{}
This research was funded by AFOSR grant FA9550-12-1-0151 of the Air Force Office of Scientific Research and the National Science Foundation Grant CCF-1415460 to the Georgia Institute of Technology.
\end{acknowledgements}

\bibliography{branching}
\bibliographystyle{plain}

\appendix
\appendixpage 
\section{Proofs}
\subsection{Proof of Theorem \ref{th:onevar-cvg} and Corollary \ref{cor:fpeq}}\label{app-sec:proof-onevar}
 \begin{theorem*}
 When $G$ tends to infinity, both sequences $\sqrt[l]{\frac{t(G+l)}{t(G)}}$ and $\sqrt[r]{\frac{t(G+r)}{t(G)}}$ converge to $\varphi$, which is the unique root greater than $1$ of the equation $p(x)= x^r - x^{r-l} -1=0$. 
 \end{theorem*}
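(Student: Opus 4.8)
The plan is to recognize $t$ as a homogeneous linear recurrence whose characteristic polynomial is exactly $p$, and then read off the growth rate from its dominant root. First I would substitute $a_G = t(G)+1$. The defining recurrence then collapses to the homogeneous form $a_G = a_{G-l} + a_{G-r}$ for every $G \ge 1$, with $a_G = 2$ for $G \le 0$; plugging in $a_G = x^G$ recovers $p(x) = x^r - x^{r-l} - 1$ as its characteristic polynomial. By Theorem \ref{th:pol}, the unique real root $\varphi>1$ is \emph{simple} because $p'(\varphi)>0$. Since $a_G\to\infty$, the additive constant is asymptotically negligible and $t(G+l)/t(G)$ has the same limit as $a_{G+l}/a_G$, so the whole statement reduces to proving $a_{G+l}/a_G \to \varphi^l$ (and its $r$-analogue).

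The core is to show $a_G \sim c\,\varphi^{G}$ with $c>0$. I would begin by proving that $\varphi$ has maximal modulus among all roots of $p$: if $p(\lambda)=0$ then $|\lambda|^{r} = |\lambda^{r-l}+1| \le |\lambda|^{r-l}+1$, whereas the monotonicity of $p$ on $[1,\infty)$ (Theorem \ref{th:pol}) gives $t^{r} > t^{r-l}+1$ for every real $t>\varphi$; together these force $|\lambda|\le\varphi$.

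The delicate point---already visible for the variable $(2,2)$, where $-\sqrt{2}$ shares the modulus of $\varphi=\sqrt{2}$---is that several roots may attain modulus $\varphi$, so $a_{G+1}/a_G$ need not converge even though $a_{G+l}/a_G$ does. I would neutralize this by splitting on residues: writing $d=\gcd(l,r)$, $l=d\,l'$, $r=d\,r'$, the recurrence preserves $G \bmod d$, and each subsequence $\tilde a_m = a_{dm+s}$ satisfies $\tilde a_m = \tilde a_{m-l'} + \tilde a_{m-r'}$ with $\gcd(l',r')=1$ and dominant root $\varphi^{d}$. In this coprime case the equality case of the triangle inequality above forces any modulus-$\varphi^{d}$ root $\mu$ to satisfy $\mu^{l'} = (\varphi^{d})^{l'}$ and $\mu^{r'} = (\varphi^{d})^{r'}$, so $\mu = \varphi^{d}$ by coprimality: $\varphi^{d}$ is now the \emph{strictly} dominant root. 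The main obstacle is then establishing that the leading coefficient is strictly positive; I expect to obtain this from the positivity of the $\tilde a_m$ together with aperiodicity, either via Pringsheim's theorem applied to the rational generating function $\sum_m \tilde a_m z^m = N(z)/\bigl(1 - z^{l'} - z^{r'}\bigr)$, whose smallest-modulus pole $1/\varphi^{d}$ is real, simple and uncancelled, or equivalently by applying Perron--Frobenius to the companion matrix of the recurrence, which is primitive exactly because $\gcd(l',r')=1$.

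This gives $\tilde a_{m+1}/\tilde a_m \to \varphi^{d}$; telescoping $l'$ consecutive steps yields $a_{G+l}/a_G \to (\varphi^{d})^{l'} = \varphi^{l}$ on each residue class, hence everywhere. Taking $l$-th roots proves $\sqrt[l]{t(G+l)/t(G)} \to \varphi$, and the $r$-statement follows identically (or from Proposition \ref{prop:phi with l or r}). The step I expect to be hardest is precisely the strict positivity of the dominant coefficient, since that is what genuinely needs the aperiodicity supplied by the $\gcd$ reduction, rather than merely the modulus bound.
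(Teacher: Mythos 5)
Your proposal is correct, but it takes a genuinely different route from the paper's. The paper's proof is entirely elementary and self-contained: it introduces $\underline{R}=\liminf_{G\to\infty} t(G+r)/t(G)$ and $\bar{R}=\limsup_{G\to\infty} t(G+r)/t(G)$ (plus the analogous $\underline{L},\bar{L}$), uses the recursion to derive $\underline{R}\ge f(\bar{R})$ and $\bar{R}\le f(\underline{R})$ with $f(x)=1+\frac{1}{x^{l/r}-1}$, then sandwiches $\underline{R},\bar{R}$ between two monotone sequences $\alpha_n=f(\omega_{n-1})$, $\omega_n=f(\alpha_n)$ started at $\alpha_0=2$, both of which converge to the unique fixed point of $f$; the substitution $X=x^{1/r}$ identifies that fixed point with the root $\varphi>1$ of $p$ via Theorem \ref{th:pol}. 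The paper thus never examines complex roots, never needs a $\gcd$ reduction (it treats $l=r$ separately through Proposition \ref{prop:phibounds}), and its proof doubles as the proof of Corollary \ref{cor:fpeq}, since the bounding sequences \emph{are} the fixed-point iterates used numerically. Your route instead homogenizes the recurrence via $a_G=t(G)+1$, identifies $p$ as its characteristic polynomial, proves strict dominance of $\varphi$ after splitting into residue classes modulo $d=\gcd(l,r)$ (equality in the triangle inequality plus coprimality of $l',r'$), and gets positivity of the leading coefficient from Perron--Frobenius on the primitive companion matrix or from Pringsheim's theorem. This costs heavier machinery but buys more: true per-class asymptotics $t(G)\sim c_{G\bmod d}\,\varphi^{G}$, and a structural explanation of why $t(G+1)/t(G)$ itself can diverge (your $(2,2)$ example, where $-\sqrt{2}$ shares the modulus of $\varphi$), which the paper only exhibits by example. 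Two details merit care in a full write-up: in the Pringsheim variant you must rule out cancellation of the pole at $1/\varphi^{d}$ --- this does follow, because $1-z^{l'}-z^{r'}$ has a unique positive real zero, so if that pole were cancelled the generating function of a positive, non-terminating sequence would have no singularity on the positive real axis, contradicting Pringsheim --- while the Perron--Frobenius variant is the cleaner of the two, since the initial state vector is entrywise positive and the leading constant is then manifestly positive, exactly as you anticipated.
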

 \begin{proof}
Proposition \ref{prop:phibounds} proves the case $l=r$, therefore we suppose throughout the proof that $r>l$.
  First, we define the notation
  \begin{align*}
   \underline{L} = \liminf_{G\rightarrow \infty} \frac{t(G+l)}{t(G)} && \bar{L} = \limsup_{G\rightarrow \infty} \frac{t(G+l)}{t(G)} \\
   \underline{R} = \liminf_{G\rightarrow \infty} \frac{t(G+r)}{t(G)} && \bar{R} = \limsup_{G\rightarrow \infty} \frac{t(G+r)}{t(G)}.
  \end{align*}
Using the recursive definition \eqref{eq:onevar-t} of $t$ between the first and the second lines, we can easily establish:
\begin{align*}
 \underline{L} &= \left( \limsup_{G\rightarrow \infty} \frac{t(G)}{t(G+l)} \right)^{-1} = \left( \limsup_{G\rightarrow \infty} \frac{t(G-l)}{t(G)} \right)^{-1} \\
 &= \left( \limsup_{G\rightarrow \infty} \frac{t(G) - t(G-r)}{t(G)} \right)^{-1} = \left(1- \limsup_{G\rightarrow \infty} \frac{t(G-r)}{t(G)} \right)^{-1} \\
 &= \left(1- \limsup_{G\rightarrow \infty} \frac{t(G)}{t(G+r)} \right)^{-1} = \left(1- \left(\liminf_{G\rightarrow \infty} \frac{t(G+r)}{t(G)} \right)^{-1} \right)^{-1} \\
 &= (1-\underline{R}^{-1})^{-1}   = 1 + \frac{1}{\underline{R} - 1}. \quad (\text{note that $\underline{R} \geq 2$.})
\end{align*}
Similarly, $\bar{L} = 1 + \frac{1}{\bar{R} - 1}$. 
Next, we obtain
\begin{align*}
 \liminf_{G\rightarrow \infty} \frac{t(G+lr)}{t(G)}
 &= \liminf_{G\rightarrow \infty} \frac{t(G+lr)}{t(G+(l-1)r)}  \frac{t(G+(l-1)r)}{t(G+(l-2)r)} \dots \frac{t(G+r)}{t(G)}\\
 &\geq \liminf_{G\rightarrow \infty} \frac{t(G+lr)}{t(G+(l-1)r)} \liminf_{G\rightarrow \infty} \frac{t(G+(l-1)r)}{t(G+(l-2)r)} \dots  \liminf_{G\rightarrow \infty}\frac{t(G+r)}{t(G)}\\
 &= \left(\liminf_{G\rightarrow \infty}\frac{t(G+r)}{t(G)}\right)^{l} = \underline{R}^l.
\end{align*}
Also, we have
\begin{align*}
 \limsup_{G\rightarrow \infty} \frac{t(G+lr)}{t(G)}
 &= \limsup_{G\rightarrow \infty} \frac{t(G+lr)}{t(G+l(r-1))} \dots \frac{t(G+l)}{t(G)}\\
 &\leq \limsup_{G\rightarrow \infty} \frac{t(G+lr)}{t(G+l(r-1))} \dots  \limsup_{G\rightarrow \infty}\frac{t(G+l)}{t(G)}\\
 &= \left(\limsup_{G\rightarrow \infty}\frac{t(G+l)}{t(G)}\right)^{r} = \bar{L}^r.
\end{align*}
Together, these yield
\begin{align*}
  \bar{L}^r \geq \limsup_{G\rightarrow \infty} \frac{t(G+lr)}{t(G)} \geq \liminf_{G\rightarrow \infty} \frac{t(G+lr)}{t(G)}\geq\underline{R}^l.
\end{align*}
Following the same steps, we can show $\bar{R}^l \geq \underline{L}^r$.
Starting from this inequality, we prove
\begin{align*}
 \bar{R}^\frac{l}{r} \geq \underline{L}= 1 + \frac{1}{\underline{R} - 1} \Rightarrow \underline{R} \geq 1  + \frac{1}{\bar{R}^\frac{l}{r} -1}.
\end{align*}
Likewise, we can show that the inequality $\bar{R} \leq 1  + \frac{1}{\underline{R}^\frac{l}{r} -1}$ holds.

We now introduce two monotonic sequences $\alpha_n$ and $\omega_n$ that respectively bound $\underline{R}$ from below and bound $\bar{R}$ from above, and we prove that they converge to the same limit.
For all non-negative $n$, let $\alpha_n$ and $\omega_n$ be defined as follows:
\begin{align*}
\alpha_n = 
 \begin{cases}
 2 &\text{ if } n = 0\\
  f(\omega_{n-1}) &\text{ otherwise}
 \end{cases}
 &&
 \omega_n = f(\alpha_n)
\end{align*}
where $f(x) = 1  + \frac{1}{x^\frac{l}{r} -1}$ for all $x \in (1, \infty)$.
We first prove by induction that for all non-negative integer $n$, $\alpha_n$ and $\omega_n$ satisfy $\alpha_n \leq \underline{R} \leq \bar{R} \leq \omega_n$.
Proposition \ref{prop:phibounds} ensures that $\alpha_0=2$ is a lower bound on $\underline{R}$.
Suppose that for a given non-negative $n$, the inequality $\alpha_n \leq \underline{R}$ holds.
We prove that $\bar{R} \leq \omega_n$ holds too:
\begin{align*}
\alpha_n \leq \underline{R}
&\Rightarrow 1  + \frac{1}{\alpha_n^\frac{l}{r} -1} \geq 1  + \frac{1}{\underline{R}^\frac{l}{r} -1} \geq \bar{R}\\
&\Rightarrow f(\alpha_n) \geq \bar{R}\\
&\Rightarrow \omega_n \geq \bar{R}.
\end{align*}
The same reasoning also proves that for all non-negative integers $n$, $\omega_n \geq \bar{R}$ implies $\alpha_{n+1} \leq \underline{R}$.
The sequence $\alpha_n$ thus bounds $\underline{R}$ from below, and $\omega_n$ bounds $\bar{R}$ from above.

We now prove that $\alpha_n$ is monotonically increasing.
Consider the inequality:
\begin{align*}
 \alpha_1 > \alpha_0 
 &\Leftrightarrow f(f(2)) > 2 \Leftrightarrow \frac{1}{f(2)^\frac{l}{r} -1} > 1 \Leftrightarrow f(2) < 2^\frac{r}{l}\\
 &\Leftrightarrow 1 + \frac{1}{2^\frac{l}{r} -1} < 2^\frac{r}{l} \Leftrightarrow 2^\frac{l}{r} < 2^\frac{r}{l} (2^\frac{l}{r} -1) \Leftrightarrow r> l.
\end{align*}
Suppose now that for a given non-negative $n$,  $\alpha_n > \alpha_{n-1}$.
This implies that $\omega_n < \omega_{n-1}$:
\begin{align*}
 \omega_n - \omega_{n-1} = \frac{1}{\alpha_n^\frac{l}{r} -1} - \frac{1}{\alpha_{n-1}^\frac{l}{r} -1} < 0 \Leftrightarrow \alpha_n^\frac{l}{r} > \alpha_{n-1}^\frac{l}{r}.
\end{align*}
In turn, for that given $n$, $\omega_n < \omega_{n-1}$ implies $\alpha_{n+1} > \alpha_{n}$.
The sequence $\alpha_{n}$ and $\omega_n$ are thus increasing and decreasing, respectively.
Since they are bounded, each of them converges to one of the solutions of the fixed-point equation $x=f(x)$.
We now prove that there is a unique solution to that equation:
\begin{align*}
x=f(x) &\Leftrightarrow x =  1  + \frac{1}{x^\frac{l}{r} -1} \Leftrightarrow (x^\frac{l}{r} -1)(x-1)=1\\
&\Leftrightarrow x^\frac{l+r}{r} - x^\frac{l}{r} -x =0\\
&\Leftrightarrow p(X) = X^r - X^{r-l} -1 = 0
\end{align*}
where $X = x^{\frac{1}{r}}$.
We establish in Theorem \ref{th:pol} that the polynomial $p$ has a unique root in $(1, \infty)$, hence the fixed-point equation $x=f(x)$ also has a unique solution.
Consequently, it is necessary that both sequences $\alpha_{n}$ and $\omega_n$ converge to this unique fixed point, and thus $\underline{R} = \bar{R}$.
Furthermore, the sequence $\sqrt[r]{\frac{t(G+r)}{t(G)}}$ converges to the root $\varphi > 1$ of the polynomial $p$.

Since we have established that $\underline{L} = 1 + \frac{1}{\underline{R} - 1}$ and $\bar{L} = 1 + \frac{1}{\bar{R} - 1}$, it follows that $\underline{L} = \bar{L}$.
Since $p(\varphi)=0$, $\varphi$ equivalently satisfies $\varphi^{l} = 1 + \frac{1}{\varphi^{r} - 1}$, therefore $\underline{L} = \bar{L} = \varphi^l$.
 \end{proof}
 
\begin{corollary*}
A numerical approximation of $\varphi^r$ is given by the fixed-point iteration
\begin{equation*}
 f(x) = 1  + \frac{1}{x^\frac{l}{r} -1}.
\end{equation*}
with the starting point $x=2$.
\end{corollary*}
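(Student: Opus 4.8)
The plan is to recognize that the fixed-point iteration $x_{n+1} = f(x_n)$ with $x_0 = 2$ is \emph{already} analyzed inside the proof of Theorem \ref{th:onevar-cvg}, only disguised as two interleaved sequences. For $x > 1$ we have $x^{l/r} > 1$, so $f(x) = 1 + \frac{1}{x^{l/r}-1}$ is well-defined, continuous, and strictly decreasing on $(1, \infty)$, mapping this interval into itself. Because $f$ is decreasing, the single iteration oscillates rather than converging monotonically; the redeeming structure is that the composition $f \circ f$ is increasing, which is precisely what the Theorem exploits.

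First I would unfold the iterates against the sequences $\alpha_n$ and $\omega_n$ from the proof of Theorem \ref{th:onevar-cvg}, where $\alpha_0 = 2$, $\alpha_n = f(\omega_{n-1})$, and $\omega_n = f(\alpha_n)$. Setting $x_0 = 2 = \alpha_0$ gives $x_1 = f(2) = \omega_0$, then $x_2 = f(\omega_0) = \alpha_1$, $x_3 = f(\alpha_1) = \omega_1$, and inductively $x_{2n} = \alpha_n$ and $x_{2n+1} = \omega_n$. Thus the even-indexed subsequence of the iteration coincides with $(\alpha_n)$ and the odd-indexed subsequence with $(\omega_n)$, so no new convergence analysis is needed beyond what was done for those two sequences.

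Then I would invoke the conclusions already established for $(\alpha_n)$ and $(\omega_n)$: the former is monotonically increasing, the latter monotonically decreasing, both are bounded, and both converge to the \emph{unique} solution of $x = f(x)$. Since that equation is equivalent, via the substitution $X = x^{1/r}$, to $p(X) = X^r - X^{r-l} - 1 = 0$, whose unique root in $(1, \infty)$ is $\varphi$ by Theorem \ref{th:pol}, the common limit is $\varphi^r$. Because both the even- and odd-indexed subsequences of $(x_n)$ converge to the same value $\varphi^r$, the full sequence converges to $\varphi^r$, which is the assertion of the corollary.

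The main obstacle is conceptual rather than computational: one must not expect monotone convergence, since $f$ is decreasing and the iterates alternate above and below the limit. The crux is that $f \circ f$ is increasing and that $f$ has a single fixed point on $(1,\infty)$, so the two monotone interleaved subsequences cannot settle on distinct limits — a genuine failure mode for oscillating iterations of decreasing maps. This uniqueness, which ultimately rests on the single-root property of $p$ from Theorem \ref{th:pol}, is exactly what forces the interleaved iteration to converge.
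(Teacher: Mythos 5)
Your proposal is correct and follows essentially the same route as the paper's own proof: both identify the iterates $x_0=2, x_1=f(2), x_2=f(f(2)),\dots$ with the interleaved sequences $(\alpha_n)$ and $(\omega_n)$ from the proof of Theorem \ref{th:onevar-cvg} (even-indexed iterates giving $\alpha_n$, odd-indexed giving $\omega_n$), and then invoke the convergence of both of those monotone sequences to the unique fixed point $\varphi^r$ established there. Your added remarks about $f$ being decreasing and $f\circ f$ increasing are sound but are just an explicit gloss on the structure the paper already exploits.
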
 
\begin{proof}
Recall the definition of the sequences $\alpha_n$ and $\omega_n$ as given in the proof of Theorem \ref{th:onevar-cvg} (notice that function $f$ has the same definition).
The sequence $f_n$ generated by the fixed-point equation is $(2, f(2), f(f(2)), f(f(f(2))), \dots)$, which is equal to 
$(\alpha_0, \omega_0, \alpha_1, \omega_1, \dots)$.
Formally, the sequence $f_n$ generated by the fixed-point equation satisfies
 \begin{equation*}
 f_n = \begin{cases} \alpha_{\frac{n}{2}} & \text{ if $n$ is even}\\ \omega_{\frac{n+1}{2}} & \text{ if $n$ is odd.} \end{cases}
 \end{equation*}
 In the proof of Theorem \ref{th:onevar-cvg}, we prove that both $\alpha_n$ and $\omega_n$ converge to $\varphi^r$ when $n$ tends to infinity, therefore $f_n$ also converges to $\varphi^r$.
\end{proof}

\subsection{Proof of Theorem \ref{th:nvar-ratio}}\label{app-sec:proof-nvar}
Recall that $z$ is the least common multiple of all $l_i$ and $r_i$.
 \begin{theorem*}
  $\varphi = \min_i \varphi_i$
 \end{theorem*}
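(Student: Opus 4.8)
Write $\mu=\min_i\varphi_i$ and let $i^\star$ be an index attaining the minimum. The plan is to prove the two-sided estimate $\mu^G\le t(G)\le C\mu^G$ for a constant $C$ and all large $G$, and then to read off the ratio from this. The starting observation is that rewriting $p_i(\varphi_i)=\varphi_i^{r_i}-\varphi_i^{r_i-l_i}-1=0$ (dividing by $\varphi_i^{r_i}$) gives the convenient normalized form $\varphi_i^{-l_i}+\varphi_i^{-r_i}=1$. Since $\varphi_i\ge\mu$ and $x\mapsto x^{-a}$ is decreasing for $a>0$, this yields the key inequality $\mu^{-l_i}+\mu^{-r_i}\ge 1$ for every $i$, with equality exactly when $\varphi_i=\mu$ (in particular for $i^\star$).

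For the lower bound I would prove $t(G)\ge\mu^G$ for all $G$ by strong induction on $G$, using the recursion \eqref{eq:nvar-t}. The base case $G\le 0$ is immediate since $t(G)=1$ and $\mu>1$ forces $\mu^G\le 1$. For the inductive step, applying the hypothesis to each term $t(G-l_i),t(G-r_i)$ and then taking the minimum gives
\[
t(G)=1+\min_i\bigl(t(G-l_i)+t(G-r_i)\bigr)\ge 1+\mu^G\min_i\bigl(\mu^{-l_i}+\mu^{-r_i}\bigr)\ge 1+\mu^G\ge\mu^G,
\]
where the middle inequality uses the key inequality above. For the upper bound, the always-branch-on-$i^\star$ rule is a feasible strategy, so a one-line induction through \eqref{eq:nvar-t} gives $t(G)\le t_{i^\star}(G)$, the \onevar{} tree size for $i^\star$. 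By Theorem \ref{th:onevar-cvg} the single-variable ratio $\sqrt[l_{i^\star}]{t_{i^\star}(G+l_{i^\star})/t_{i^\star}(G)}$ converges to $\varphi_{i^\star}=\mu$, so $t_{i^\star}(G)/\mu^G$ converges to a positive constant along each residue class of $G$ modulo $\mathrm{lcm}(l_{i^\star},r_{i^\star})$; there are finitely many classes, hence $t_{i^\star}(G)\le C\mu^G$. Combining the two bounds gives $\mu^G\le t(G)\le C\mu^G$, and therefore $\tfrac1G\log t(G)\to\log\mu$.

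It remains to pass from this exponential growth rate to the quantity $\varphi=\lim_{G\to\infty}\sqrt[z]{t(G+z)/t(G)}$ that actually defines the \nvar{} ratio. Since $z$ is a common multiple of all gains, I would telescope $\log t(G)$ along an arithmetic progression with common difference $z$: writing $G=qz+s$, the differences $\tfrac1z(\log t(s+(j+1)z)-\log t(s+jz))$ are exactly the logarithms of the $z$-shift ratios, so if the defining limit $\varphi$ exists then a Cesàro averaging of these differences shows $\tfrac1G\log t(G)\to\log\varphi$ as well. Matching this against the rate $\log\mu$ obtained above forces $\varphi=\mu=\min_i\varphi_i$, as claimed.

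The main obstacle is precisely this last transfer. The sandwich $\mu^G\le t(G)\le C\mu^G$ controls only the \emph{exponential rate}, i.e.\ the window for $\liminf$ and $\limsup$ of the raw $z$-shift ratio is only $[\mu^z/C,\,C\mu^z]$, which does not by itself pin the ratio to $\mu^z$. To legitimize the Cesàro step (and the very existence of the limit) I would first record that $t$ is \emph{submultiplicative}, $t(G+G')\le t(G)\,t(G')$, which follows by grafting an optimal subtree onto each leaf of an optimal tree for the gap $G$ and counting nodes; then $\log t$ is subadditive and Fekete's lemma guarantees that $\tfrac1G\log t(G)$ converges, so the growth rate is well defined and equals $\log\mu$. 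The delicate point, and where the argument must be carried out with care, is showing that this convergence of the averaged rate indeed propagates to convergence of the per-$z$-step ratio to $\mu^z$ rather than to oscillation within the constant-factor window; this is the heart of the matter and the step I would expect to demand the most attention.
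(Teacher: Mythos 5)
Your proposal does not prove the theorem as stated, and you have correctly located the missing piece yourself: the theorem's $\varphi$ is \emph{defined} as $\lim_{G\to\infty}\sqrt[z]{t(G+z)/t(G)}$, so the assertion to be proved is precisely that this shift ratio converges (and converges to $\min_i\varphi_i$). Your sandwich $\mu^G\le t(G)\le C\mu^G$, together with submultiplicativity and Fekete, only yields convergence of the averaged rate $\tfrac1G\log t(G)$ to $\log\mu$ and confines the $z$-shift ratio to the window $[\mu^z/C,\,C\mu^z]$; this cannot be upgraded to convergence of the ratio by any general argument, since e.g.\ $a_G=(2+(-1)^G)\mu^G$ satisfies the same two-sided bounds and has $\tfrac1G\log a_G\to\log\mu$, yet its shift ratios oscillate forever. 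Closing this requires using the structure of the recursion \eqref{eq:nvar-t} itself, which is exactly what the paper does: it never passes through the growth rate, but instead bounds $\underline{Z}=\liminf_{G\to\infty}t(G+z)/t(G)$ and $\bar{Z}=\limsup_{G\to\infty}t(G+z)/t(G)$ directly, using the sign characterization of the characteristic polynomials (Theorem \ref{th:pol}) to show $\bar{Z}\le\varphi_i^z$ for every $i$ and $\underline{Z}\ge\varphi_j^z$ for a suitable $j$, which squeezes $\varphi_j^z\le\underline{Z}\le\bar{Z}\le\min_i\varphi_i^z$ and delivers existence and value of the limit in one stroke. Without an argument of this kind, your proof establishes a strictly weaker statement.

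A secondary flaw: your upper bound invokes Theorem \ref{th:onevar-cvg} to claim that $t_{i^\star}(G)/\mu^G$ converges to a positive constant along residue classes, but convergence of ratios does not imply this (take $a_G=G\rho^G$: the ratio tends to $\rho$ while $a_G/\rho^G\to\infty$). The bound you actually need, $t_{i^\star}(G)\le C\mu^G$, is true and can be obtained by the same induction you used for the lower bound: since $\mu^{-l_{i^\star}}+\mu^{-r_{i^\star}}=1$ holds with equality, the ansatz $t_{i^\star}(G)\le C\mu^G-1$ propagates through the \onevar{} recursion, as $1+(C\mu^{G-l_{i^\star}}-1)+(C\mu^{G-r_{i^\star}}-1)=C\mu^G-1$, and $C=2\mu^{r_{i^\star}}$ covers the finitely many base cases $-r_{i^\star}<G\le 0$. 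So the two-sided sandwich is salvageable by elementary means; the genuine gap is solely the unproven transfer from exponential growth rate to convergence of the per-$z$-step ratio.
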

\begin{proof}
Let $\alpha$ be such that for all positive integers $x \leq z$,
\begin{equation}
 \alpha^x \leq \liminf_{G\rightarrow \infty}\frac{t(G+x)}{t(G)} \leq \limsup_{G\rightarrow \infty} \frac{t(G+x)}{t(G)}.
 \label{eq:nvar-proof-bounds}
\end{equation}
A possible value is $\alpha=1$.
Similar to the proof of Theorem \ref{th:onevar-cvg}, we use the notation
\begin{align*}
 \underline{Z} = \liminf_{G\rightarrow \infty}\frac{t(G+z)}{t(G)} && \bar{Z} = \limsup_{G\rightarrow \infty}\frac{t(G+z)}{t(G)}.
\end{align*}
For all variables $i$, we have:
\begin{align*}
 \bar{Z}
 &= \limsup_{G\rightarrow \infty} \left( \frac{t(G+z)}{t(G+z-l_i)} \frac{t(G+z-l_i)}{t(G+z-2 l_i)} \dots \frac{t(G+ z -(\frac{z}{l_i}-1)l_i)}{t(G)} \right)\\
 &\leq \left( \limsup_{G\rightarrow \infty} \frac{t(G+l_i)}{t(G)}  \right)^\frac{z}{l_i}\\
 &= \left( \limsup_{G\rightarrow \infty} \frac{1 + \min_j(t(G+l_i-l_j) + t(G+l_i - r_j))}{t(G)}  \right)^\frac{z}{l_i}\\
 &\leq \left( 1 + \limsup_{G\rightarrow \infty} \frac{t(G+l_i - r_i)}{t(G)}  \right)^\frac{z}{l_i}\\
 &\leq \left( 1 + \left(\liminf_{G\rightarrow \infty} \frac{t(G)}{t(G+l_i - r_i)}\right)^{-1}  \right)^\frac{z}{l_i}\\
 &\leq \left( 1 + \left(\liminf_{G\rightarrow \infty} \frac{t(G+r_i - l_i)}{t(G)}\right)^{-1}  \right)^\frac{z}{l_i}\\
 &\leq \left( 1 + \alpha^{l_i-r_i} \right)^\frac{z}{l_i}
\end{align*}
where the last line follows from the lower bound on $\alpha^{-x}$ from \eqref{eq:nvar-proof-bounds}.

Suppose there exists a variable $i$ such that $\alpha > \varphi_i$, then we establish, using Theorem \ref{th:pol}:
\begin{align*}
 \alpha > \varphi_i
 &\Rightarrow p_i(\alpha) > 0\\
 &\Rightarrow \alpha^{r_i} - \alpha^{r_i-l_i} - 1 > 0\\
 &\Rightarrow 1 + \alpha^{l_i-r_i} < \alpha^{l_i}\\
 &\Rightarrow (1 + \alpha^{l_i-r_i})^\frac{z}{l_i} < (\alpha^{l_i})^\frac{z}{l_i}\\
 &\Rightarrow \bar{Z} < \alpha^z.
\end{align*}
This contradicts expression \eqref{eq:nvar-proof-bounds}, hence for all variables $i$, $\alpha \leq \varphi_i$.
Suppose that there exists a variable $i$ such that $\varphi_i^z < \bar{Z}$, then
\begin{align*}
 \alpha^z \leq \varphi_i^z < \bar{Z} \leq \left( 1 + \alpha^{l_i-r_i} \right)^\frac{z}{l_i}
 & \Rightarrow
 \alpha^{l_i} < 1 + \alpha^{l_i-r_i}\\
 & \Rightarrow
 \alpha^{r_i-l_i} p_i(\alpha) < 0\\
 & \Rightarrow
 \alpha < \varphi_i,
\end{align*}
which is a contradiction, hence for all variables $i$, $\bar{Z} \leq \varphi_i^z$.
In addition, for each variable $i$,
\begin{align*}
 \underline{Z}
 &\geq \left( \liminf_{G\rightarrow \infty} \frac{t(G+r_i)}{t(G)}  \right)^\frac{z}{r_i}\\
 &\geq \left( \liminf_{G\rightarrow \infty} \frac{1 + \min_j(t(G+r_i-l_j) + t(G+r_i-r_j))}{t(G)}  \right)^\frac{z}{r_i} \\
 &\geq \left( \min_j \liminf_{G\rightarrow \infty} \frac{t(G+r_i-l_j) + t(G+r_i-r_j)}{t(G)}  \right)^\frac{z}{r_i} .\\
 \intertext{Hence there must exist a variable $j$ such that}
 \underline{Z}
 &\geq \left( 1 + \liminf_{G\rightarrow \infty} \frac{t(G+r_j - l_j)}{t(G)}  \right)^\frac{z}{r_j}\\
 &\geq \left( 1 + \alpha^{r_j-l_j} \right)^\frac{z}{r_j}.
\end{align*}
Suppose $\alpha < \varphi_j$, then, using Theorem \ref{th:pol} in the first line:
\begin{align*}
\alpha < \varphi_j &\Rightarrow p_j(\alpha) < 0\\
&\Rightarrow 1 + \alpha^{r_j-l_j} > \alpha^{r_j} \\
&\Rightarrow (1 + \alpha^{r_j-l_j})^\frac{z}{r_j} > (\alpha^{r_j})^\frac{z}{r_j} \\
&\Rightarrow \underline{Z} > \alpha^z.
\end{align*}
Since $\varphi_j^z > \alpha^z$ implies $\underline{Z} > \alpha^z$, then $\underline{Z} \geq \varphi_j^z$ must be true.
This can be shortly proven by writing
\begin{align*}
 (\varphi_j^z > \alpha^z \Rightarrow \underline{Z} > \alpha^z) \Rightarrow \neg(\varphi_j^z > \alpha^z \land \underline{Z} \leq \alpha^z) \Rightarrow \neg(\varphi_j^z > \underline{Z}) \Rightarrow \varphi_j^z \leq \underline{Z}.
\end{align*}
Since $\varphi_i^z \geq \bar{Z}$ holds for all variables $i$, then variable $j$ satisfies $\varphi_j^z \leq \underline{Z} \leq \bar{Z} \leq \varphi_i^z$ for all variables $i$.
We can finally conclude that
\begin{align*}
 \lim_{G \rightarrow \infty} \sqrt[z]{\frac{t(G+z)}{t(G)}} = \min_{i} \varphi_i^z.
\end{align*}

\end{proof}

\section{Additional numerical simulations}\label{app-sec:additional_simulations}
We give an additional set of simulations on the \nvar{} model, with the same target gaps as in the simulations on \gap{}, so that one can compare both experimental setups easily.
For the same experiment, we present two different tables, Table \ref{tab:simulation-nvar-more-gaps-presented-as-nvar} and \ref{tab:simulation-nvar-more-gaps-presented-as-gap}, where the results are presented as in Table \ref{tab:simulation-nvar} and \ref{tab:simulation-gap}, respectively.
The two differences between Table \ref{tab:simulation-nvar-more-gaps-presented-as-nvar} and \ref{tab:simulation-nvar-more-gaps-presented-as-gap} are thus the presence (or absence) of the last column LB, and the reference used for relative performance (for Table \ref{tab:simulation-nvar-more-gaps-presented-as-nvar} it is the minimum tree-size, and for Table \ref{tab:simulation-nvar-more-gaps-presented-as-gap} it is the tree-size produced by \sfproduct{}).

These results show that \sfratio{} performs generally better than \sfproduct{} and \sflinear{}, and that this phenomenon becomes more significant as the gap to close increases.
If we compare Table \ref{tab:simulation-nvar-more-gaps-presented-as-gap} to Table \ref{tab:simulation-gap}, it appears that \sfratio{} is even more at an advantage on \gap{} than on \nvar{}.
One reason may be that, for a given instance, the best variable for \sfratio{} and \sfproduct{} may be the same, or not very different, and in the \nvar{} experiments this best variable would be branched on at every node.
However, in \gap{}, this variable would be branched on only at the root node, and the subsequent best variables chosen by \sfratio{} and \sfproduct{} are likely to differ.
Indeed, observe that there are many more ties in the \nvar{} experiments than in the \gap{} experiments.

\begin{table}
\centering
\begin{footnotesize}
\begin{tabular}{c|c|c|ccccc|c|c|c}
\multirow{2}{*}{Data}& \multirow{2}{*}{Gap} & & \multicolumn{5}{c|}{\sflinear{}($\mu$)} & \multirow{2}{*}{\sfproduct{}} & \multirow{2}{*}{\sfratio{}} & \multirow{2}{*}{LB}\\
&& $\mu =$ & $0$ & $\frac{1}{6}$ & $\frac{1}{3}$ &  $\frac{1}{2}$ & $1$ & &  \\
\hline
\hline
\multirow{6}{*}{B}&\multirow{2}{*}{5000}& t-s & \textbf{0.13} & 0.15 & 0.15 & 0.46 & 517.40 & 0.16 & 0.16 & 0.11\\
&& wins & \textbf{99} & 98 & 98 & 96 & 21 & 97 & 97 & \\
\cline{2-11}
&\multirow{2}{*}{15000}& t-s & 6.02 & 3.49 & 3.13 & 2.80 & $10^{04}$ & \textbf{2.76} & \textbf{2.76} & 2.62\\
&& wins & 89 & 94 & 96 & 98 & 8 & \textbf{99} & \textbf{99} & \\
\cline{2-11}
&\multirow{2}{*}{25000}& t-s & 7.78 & 3.68 & 3.18 & 2.72 & $10^{05}$ & \textbf{2.71} & \textbf{2.71} & 2.71\\
&& wins & 88 & 93 & 95 & 99 & 8 & \textbf{100} & \textbf{100} & \\
\hline
\hline
\multirow{6}{*}{U}&\multirow{2}{*}{5000}& t-s & 98.02 & 2.47 & 0.96 & 0.90 & 1353.05 & 0.96 & \textbf{0.87} & 0.73\\
&& wins & 16 & 85 & 95 & 89 & 12 & \textbf{96} & 93 & \\
\cline{2-11}
&\multirow{2}{*}{12500}& t-s & 544.24 & 10.22 & \textbf{2.76} & 3.59 & $10^{04}$ & 2.82 & 2.79 & 2.50\\
&& wins & 13 & 74 & \textbf{99} & 84 & 10 & 97 & 92 & \\
\cline{2-11}
&\multirow{2}{*}{20000}& t-s & 1860.53 & 16.64 & 3.58 & 4.15 & $10^{06}$ & 3.80 & \textbf{3.17} & 3.09\\
&& wins & 13 & 70 & 92 & 87 & 11 & 90 & \textbf{95} & \\
\hline
\hline
\multirow{6}{*}{V} &  \multirow{2}{*}{5000}& t-s & 694.14 & 7.43 & 7.49 & 15.47 & 1587.97 & 6.52 & \textbf{5.80} & 5.49\\
&& wins & 20 & 82 & 81 & 61 & 13 & 87 & \textbf{92} & \\
\cline{2-11}
&  \multirow{2}{*}{7500}& t-s & 2176.92 & 7.79 & 7.81 & 18.20 & 5596.92 & 6.75 & \textbf{5.89} & 5.69\\
&& wins & 19 & 81 & 83 & 64 & 14 & 86 & \textbf{94} & \\
\cline{2-11}
&\multirow{2}{*}{10000}& t-s & 6456.22 & 9.48 & 8.17 & 22.52 & $10^{04}$ & 7.61 & \textbf{5.81} & 5.72\\
&& wins & 19 & 76 & 86 & 66 & 15 & 83 & \textbf{97} & \\
\hline
\hline
\multirow{6}{*}{X} & \multirow{2}{*}{2000}& t-s & 626.55 & 5.64 & 14.14 & 36.89 & 527.77 & \textbf{5.08} & 5.64 & 4.79\\
&& wins & 11 & 85 & 64 & 45 & 12 & \textbf{91} & 85 & \\
\cline{2-11}
& \multirow{2}{*}{3000}& t-s & 1943.09 & \textbf{6.21} & 16.60 & 48.57 & 1198.56 & 6.76 & \textbf{6.21} & 5.67\\
&& wins & 9 & \textbf{88} & 68 & 47 & 12 & \textbf{88} & \textbf{88} & \\
\cline{2-11}
& \multirow{2}{*}{4000}& t-s & 5797.74 & \textbf{6.03} & 18.73 & 60.70 & 2449.16 & 7.30 & \textbf{6.03} & 5.68\\
&& wins & 9 & \textbf{93} & 67 & 47 & 12 & 89 & \textbf{93} & \\
\end{tabular}
\end{footnotesize}
\caption{Simulation results on \nvar{} presented as Table \ref{tab:simulation-nvar}.}
 \label{tab:simulation-nvar-more-gaps-presented-as-nvar}
\end{table}

\begin{table}
\centering
\begin{footnotesize}
\begin{tabular}{c|c|c|ccccc|c|c}
\multirow{2}{*}{Data}& \multirow{2}{*}{Gap} & & \multicolumn{5}{c|}{\sflinear{}($\mu$)} & \multirow{2}{*}{\sfproduct{}} & \multirow{2}{*}{\sfratio{}}\\
&& $\mu =$ & $0$ & $\frac{1}{6}$ & $\frac{1}{3}$ &  $\frac{1}{2}$ & $1$ & &  \\
\hline
\hline
\multirow{6}{*}{B}&\multirow{2}{*}{5000}& t-s & \textbf{-0.03} & -0.02 & -0.02 & 0.30 & 516.40 & 0.00 & 0.00\\
&& wins & \textbf{99} & 98 & 98 & 96 & 21 & 97 & 97\\
\cline{2-10}
&\multirow{2}{*}{15000}& t-s & 3.18 & 0.71 & 0.36 & 0.04 & $10^{04}$ & \textbf{0.00} & \textbf{0.00}\\
&& wins & 89 & 94 & 96 & 98 & 8 & \textbf{99} & \textbf{99}\\
\cline{2-10}
&\multirow{2}{*}{25000}& t-s & 4.94 & 0.94 & 0.46 & 0.01 & $10^{05}$ & \textbf{0.00} & \textbf{0.00}\\
&& wins & 88 & 93 & 95 & 99 & 8 & \textbf{100} & \textbf{100}\\
\hline
\hline
\multirow{6}{*}{U}&\multirow{2}{*}{5000}& t-s & 96.14 & 1.50 & 0.00 & -0.06 & 1339.26 & 0.00 & \textbf{-0.09}\\
&& wins & 16 & 85 & 95 & 89 & 12 & \textbf{96} & 93\\
\cline{2-10}
&\multirow{2}{*}{12500}& t-s & 526.58 & 7.20 & \textbf{-0.06} & 0.75 & $10^{04}$ & 0.00 & -0.03\\
&& wins & 13 & 74 & \textbf{99} & 84 & 10 & 97 & 92\\
\cline{2-10}
&\multirow{2}{*}{20000}& t-s & 1788.75 & 12.37 & -0.21 & 0.34 & $10^{06}$ & 0.00 & \textbf{-0.61}\\
&& wins & 13 & 70 & 92 & 87 & 11 & 90 & \textbf{95}\\
\hline
\hline
\multirow{6}{*}{V} &  \multirow{2}{*}{5000}& t-s & 645.51 & 0.85 & 0.90 & 8.40 & 1484.61 & 0.00 & \textbf{-0.68}\\
&& wins & 20 & 82 & 81 & 61 & 13 & 87 & \textbf{92}\\
\cline{2-10}
&  \multirow{2}{*}{7500}& t-s & 2032.98 & 0.97 & 0.99 & 10.73 & 5236.79 & 0.00 & \textbf{-0.81}\\
&& wins & 19 & 81 & 83 & 64 & 14 & 86 & \textbf{94}\\
\cline{2-10}
&\multirow{2}{*}{10000}& t-s & 5992.84 & 1.74 & 0.52 & 13.86 & $10^{04}$ & 0.00 & \textbf{-1.67}\\
&& wins & 19 & 76 & 86 & 66 & 15 & 83 & \textbf{97}\\
\hline
\hline
\multirow{6}{*}{X} & \multirow{2}{*}{2000}& t-s & 591.45 & 0.54 & 8.62 & 30.28 & 497.44 & \textbf{0.00} & 0.54\\
&& wins & 11 & 85 & 64 & 45 & 12 & \textbf{91} & 85\\
\cline{2-10}
& \multirow{2}{*}{3000}& t-s & 1813.72 & \textbf{-0.52} & 9.21 & 39.16 & 1116.33 & 0.00 & \textbf{-0.52}\\
&& wins & 9 & \textbf{88} & 68 & 47 & 12 & \textbf{88} & \textbf{88}\\
\cline{2-10}
& \multirow{2}{*}{4000}& t-s & 5396.50 & \textbf{-1.18} & 10.65 & 49.77 & 2275.74 & 0.00 & \textbf{-1.18}\\
&& wins & 9 & \textbf{93} & 67 & 47 & 12 & 89 & \textbf{93}\\
\end{tabular}
\end{footnotesize}
\caption{Simulation results on \nvar{} presented as Table \ref{tab:simulation-gap}.}
 \label{tab:simulation-nvar-more-gaps-presented-as-gap}
\end{table}

\OS{}

\end{document}